\documentclass[11pt, a4paper, article,reqno,]{amsart}
\usepackage{dsfont}
\usepackage{graphicx, xcolor,amsmath, amssymb}
\usepackage{amsmath,stackengine}
\usepackage{geometry}
\usepackage{appendix}
\usepackage{mathrsfs}
\usepackage{amssymb,latexsym}
\usepackage{amsthm}

\newtheorem{theorem}{Theorem}[section]
\newtheorem{proposition}[theorem]{Proposition}
\newtheorem{definition}[theorem]{Definition}
\newtheorem{lemma}[theorem]{Lemma}
\newtheorem{corollary}[theorem]{Corollary}

\theoremstyle{remark}
\newtheorem{remark}[theorem]{Remark}
\DeclareMathOperator{\dive}{div}

\newenvironment{acknowledgments}{%
  \bigskip
  \noindent\textbf{Acknowledgments.}\quad}{\par}

\numberwithin{equation}{section}

\title[\empty]{Global Existence and Uniqueness of Strong Solutions for a Phase Transition Model in Atmospheric Dynamics} 

\author[G. Cianfarani Carnevale]{Giada Cianfarani Carnevale}
\address[Giada Cianfarani Carnevale]{Dipartimento di Ingegneria e Scienze dell'Informazione e Matematica, Universit\`a degli Studi dell'Aquila (Italy)}
\email{giada.cianfaranicarnevale@univaq.it}
\author[D. Donatelli]{Donatella Donatelli}
\address[Donatella Donatelli]{Dipartimento di Ingegneria e Scienze dell'Informazione e Matematica, Universit\`a degli Studi dell'Aquila (Italy)}
\email{donatella.donatelli@univaq.it}
\author[S. Spirito]{Stefano Spirito}
\address[Stefano Spirito]{Dipartimento di Ingegneria e Scienze dell'Informazione e Matematica, Universit\`a degli Studi dell'Aquila (Italy)}
\email{stefano.spirito@univaq.it}

\keywords{tropical atmospheric dynamics; primitive equations; global well-posedness}
\subjclass[2020]{35D35, 76D03, 86A10, 35Q35, 76U05, 80A22}

\allowdisplaybreaks

\begin{document}

\begin{abstract}
   In this work, we study a phase transition model in atmospheric dynamics, inspired by the works \cite{CZ, LT, LT2}, which analyze the primitive equations governing the evolution of velocity, temperature, and specific humidity. The main difficulty arises from the presence of a multivalued discontinuous nonlinear term in the temperature and in the humidity equations, describing the formation of precipitations, which becomes active under supersaturation conditions. To overcome this issue, we introduce a regularized formulation that ensures the existence and uniqueness of approximate solutions. By employing classical compactness arguments, we then establish the existence of a strong solution to the original model. Additionally, we establish uniqueness under a conditional and physically meaningful assumption. This approach allows us to provide a rigorous justification of the tropical climate model on the whole space $\mathbb{R}^2$, while avoiding the introduction of a viscosity term in the humidity equation.
\end{abstract}

\maketitle

\section{Introduction}\label{sec:intro}

The fundamental equations of atmospheric dynamics, known as the primitive equations, are derived from the Navier-Stokes equations with rotation. These equations are integrated with thermodynamic diffusion, considering that the ratio of the vertical scale to the horizontal scale is negligible. The primitive equations account for buoyancy forces and stratification effects, assuming the Boussinesq hypothesis, which implies the neglect of density variations in all terms except those multiplied by g, the acceleration due to gravity. On large scales, atmospheric flow can be uniquely decomposed into its barotropic and baroclinic components. These approaches are based on the characteristic scales of motion: horizontal scales, which extend over thousands of kilometers, are much larger than the vertical scales of the troposphere, approximately 16 km (\cite{KM}). Consequently, vertical motion is modeled through hydrostatic balance, expressed as 
\[
\frac{\partial p}{\partial z} = -\rho g,
\]
where \( p \) represents pressure and \( \rho \) the density. Following the derivation in \cite{KM} and considering the Boussinesq and hydrostatic assumptions, the viscous, incompressible primitive equations derived from the Navier Stokes equations can be written as follows:

\begin{equation}\label{prim_intro}
	\left\{\begin{aligned}
		& \partial_t V + V \cdot \nabla V + W\partial_z V - \mu \Delta_h V - \nu \partial_z^2 V + \nabla_h \Phi =0\\
  & \dive_h V + \partial_z W = 0 \\
		&  \partial_z \Phi = \frac{g}{\theta_0}\theta \\
	&\partial_t \theta + V \cdot \nabla_h \theta + W \partial_z \theta + \frac{N^2\theta_0}{g}W = S_{\theta} \\
 &\partial_t Q + V \cdot \nabla_h Q + W \partial_z Q = - P 
		\end{aligned}\right.
\end{equation}
where $V=(V_1,V_2)$, $U=(V,W)$ is the velocity field, $\Phi$ the pressure, $Q$ the moisture, $h=(x,y)$, $\theta$ the potential temperature (the temperature that a parcel of air would have if displaced adiabatically in the vertical from its original position to a fixed reference position). Classic references on this subject include \cite{LTW,LTW2,P,R,salmon,TZ}.
 This small aspect ratio limit has been rigorously justified, see for example  \cite{AG01,  LT19} for the standard atmosphere models. In  \cite{DJ, DJ2} the small aspect ratio limit has been rigorously performed to model  the polluted atmosphere as a geophysical fluid sharing with seas and oceans a shallow horizontal domain where,  to the fluid equations,  a convection and diffusion equation is added, aimed at modelling the presence of pollutants in the air and  the ``downwind-matching'' coordinate system,  better suited to the meteorological context, is adopted.

The global existence of the primitive viscous equations, fundamental for the large-scale description of oceanic and atmospheric dynamics, has been shown in the seminal paper \cite{CT}. In \cite{LT_chap} there is an overview regarding the most relevant results in the existence theory for the primitive equations and the recent developments in geophysical models:  global stability of strong solutions (with full or partial viscosity),  conditional uniqueness of weak solutions and their global stability, even in the presence of discontinuous initial data and  singular perturbation limits such as small aspect ratio which rigorously justify hydrostatic equilibrium  and relaxation limits. 

During the last years a lot of effort has been devoted to the development of the atmosphere modelling addressing different issues such as, for example, the role of full diffusivity and viscosity  (\cite{CLT14}, \cite{CLT16}).  Some other relevant aspects in the atmosphere modelling are phase transition phenomena due to air saturation and condensation, therefore when moisture is included, an equation for the conservation of water must be added,  \cite{LTW, GH06}. In the past, the equation of conservation of moisture was simply an equation of transport that did not account for changes in phase, concentration, evaporation, and rainfall. Recently, researchers have addressed models that incorporate these changes in phase to provide a rigorous mathematical framework for studying such systems of equations. In \cite{HKLT} it is analysed the global solvability of the primitive equations of the atmosphere, integrated with a humidity model that includes phase changes (condensation, evapotranspiration, rainfall formation) and processes of auto-transformation between clouds and rain. In  \cite{BCZ,CZ,LT}  the authors propose primitive equations which describe the evolution of velocity, temperature, and specific humidity (the mass of water vapour per unit of air). The main difficulty lies in the presence of discontinuous multivalued nonlinear term in the temperature and humidity equations, which is active only under supersaturation conditions, and in the corresponding variations of temperature. To address the problem, both in \cite{BCZ,CZ}, the authors reformulate the system using the pressure coordinates $(x,y,p)$ thanks to the hydrostatic assumption, and make use of differential inclusions to show the existence of weak solutions. Contrary to \cite{BCZ}, in \cite{CZ} the velocity is not prescribed and the global existence of quasi-strong and strong solutions is proved, along with results on uniqueness and maximum principles that are physically relevant. 

Strictly related to the moisture phase transition model is the so called \textit{Tropical Climate Model} introduced in  \cite{FM}. 
The absence of the Coriolis force at the equator favours the formation of different types of atmospheric waves, characterised by complex vertical and meridional structures. These waves are mainly divided into two categories: barotropic waves, which propagate toward the poles, and baroclinic waves, which remain localised near the equator and extend along it.
Moreover, in  the tropical zone of the earth, the wind in the lower troposphere is of equal magnitude, but with opposite sign to that in the upper troposphere, in other words, the primary effect is captured in the first baroclinic mode, i.e.on the first mode of the fluctuation of the solution about its vertical average. However, for the study of the tropical-extratropical interactions, where the transport of momentum between the barotropic (the spatial vertical average of the solution) and baroclinic (the fluctuation of the solution about the barotropic part) modes plays an important role, it is necessary to retain both the barotropic and baroclinic modes of the velocity.  In \cite{LT2}, the  global well-posedness of strong solutions was established for this kind of model and in  \cite{LSL} the existence and uniqueness of global solutions for the three-dimensional tropical climate model with partial viscosity and damping are established. Techniques involving anisotropic embedding and fractional derivative interpolation inequalities are employed to obtain estimates and smooth solutions, which are compared with the classical Navier Stokes equations.  An important ingredient of the tropical atmospheric circulation is the water vapour. Water vapour is the most abundant greenhouse gas in the atmosphere, and it is responsible for amplifying the long-term warming or cooling cycles. Therefore, one should also consider the coupling with an equation modeling moisture in the atmosphere. Indeed in \cite{FM} the equations are analogous to those used for shallow waters and include a precipitation term that acts as a nonlinear source. This system is also coupled with the nonlinear, incompressible barotropic flow equations. From the perspective of tropical atmospheric dynamics, this theory offers a new outlook on how large regions of humidity, associated with deep convection, can move and interact with large-scale dynamics within a near-equilibrium framework. In \cite{LT}, based on the work of \cite{FM}, a nonlinear system describing the interaction between the barotropic mode and the first baroclinic mode of the tropical atmosphere with humidity is analyzed. The existence and uniqueness of strong solutions for initial data in $H^1$ are shown for every positive convective relaxation parameter $\epsilon$. Moreover, if the initial data have higher regularity than $H^1$, the solution depends continuously on the data. It is also shown that, as $\epsilon$ tends to zero, the system converges to a limit system at a rate of $O(\sqrt{\epsilon})$. This limit system, in turn, possesses a unique strong solution in $H^1$ that depends continuously on the data if they are slightly more regular than $H^1$. 

The purpose of this paper is to fill the gap in the models \cite{LT} and \cite{CZ}. We analyze a tropical climate model that incorporates the barotropic and baroclinic modes, as in \cite{LT}, but introduces the multivalued discontinuous nonlinear term present in \cite{CZ} into the temperature and humidity equations. This term better describes the ratio of vapor in the air, as it is crucial to account for the potential saturation of vapor, which leads to condensation (clouds) and rain. Unlike \cite{CZ}, no diffusion is required in the humidity equation. However, from a physical perspective, the temperature diffusion mechanism cannot be ignored, and we consider diffusivity in the temperature equation, contrary to \cite{LT}.

 The paper is organized as follows. In Section \ref{sec:2} some technical lemmas and basic notations are reviewed. In Section \ref{sec3}, following the work of \cite{FM} and starting from \eqref{prim_intro} we derive the so called \textit{Tropical Climate Model} (TCM): 
\begin{equation}\label{tro_model_fin_intro}
     \left\{\begin{aligned}
&\partial_t u + u \cdot \nabla u - \Delta u + \nabla p + \dive (v \otimes v) = 0\\
& \partial_ t v + u \cdot \nabla v + v \nabla u - \Delta v = \frac{H}{\pi} \frac{g}{\theta_0} \nabla T \\
& \dive u = 0 \\
& \partial_t T + u \nabla T - \Delta T - \frac{N^2 \theta_0}{g} \dive v =  \frac{H g}{\pi R}(\dive v)^- \mathcal{H}(q-q_s) G^+(T)\\
& \partial_t q + u \nabla q + \bar{Q} \dive v = -  \frac{H g}{\pi R}(\dive v)^- \mathcal{H}(q-q_s) G^+(T),
    \end{aligned}\right.
\end{equation}
where $u$ is the \textit{barotropic velocity}, $v$ the \textit{baroclinic velocity}, $T$ the \textit{temperature}, $q_s \in (0,1)$ the \textit{saturation humidity} and $\mathcal{H}(q-q_s)$ the multivalued Heaviside function.

In Section \ref{sec:mainresults} we state precisely the main results of the paper. In Section \ref{sec:4}, we introduce a family of regularized problems that approximate system \eqref{tro_model_fin_intro}. First, we regularize the Heaviside function via a mollification $\mathcal{H}_\epsilon(q-q_s)$; then, we add an artificial viscosity term $\eta \Delta q$ in the moisture equation. We establish the global well-posedness for each fixed pair of positive parameters $\epsilon$ and $\eta$.

In Section \ref{sec:5}, we collect all $\eta$-uniform estimates and perform the limit as $\eta \to 0$, thereby proving the global well-posedness for every fixed $\epsilon>0$.

Finally, in Section \ref{sec:6}, we take the limit as $\epsilon \to 0$ and prove the existence and uniqueness of a strong solution to the original system \eqref{tro_model_fin_intro}. Moreover, we establish that the temperature $T$ remains essentially bounded in space and time. This ensures the exclusion of unbounded temperature growth, consistently with physical expectations in tropical environments.

\section{Notations and Technical Lemmas}\label{sec:2}
We denote by \( L^p(\mathbb{R}^2) \) the classical Lebesgue space for \( p \in [1, \infty] \), equipped with the norm \( \|\cdot\|_p \). The space \( W^{l,p}(\mathbb{R}^2) \) consists of functions that are integrable in the \( L^p \) sense, and whose weak derivatives up to order \( l \) are also in \( L^p(\mathbb{R}^2) \). The norm of an element \( f \in W^{l,p}(\mathbb{R}^2) \) is defined as $\|f\|_{W^{l,p}(\mathbb{R}^2)} = \sum_{|\alpha| \leq l} \| D^{\alpha}f \|_{L^{p}(\mathbb{R}^2)}$. We denote as \( W^{k,p}(0,T;X) \) the function spaces consisting of functions \( f: [0,T] \to X \) (where \( X \) is a Banach space) that are \( k \) times weakly differentiable with derivatives in \( L^p(0,T;X) \). 

Given a positive time $T>0$, for a sufficiently regular vector field \( f : \mathbb{R}^2 \times (0, T) \to \mathbb{R}^2 \), we denote by \( \operatorname{div} f \) its divergence and by \( \Delta f \) its Laplacian as
\begin{equation*}
  \operatorname{div} f = \partial_x f_1 + \partial_y f_2, \quad \text{and} \quad \Delta f = (\partial_{xx} f_1 + \partial_{yy} f_1, \; \partial_{xx} f_2 + \partial_{yy} f_2), 
\end{equation*}
where \( f = (f_1, f_2) \).

Finally, we will denote by $\langle \cdot \ , \ \cdot\rangle$ the following averaged integral on the vertical variable

$$\langle f, g\rangle = \frac{1}{H}\int_{0}^{H}fgdz.$$

Moreover  we recall some technical results that will be used in the next sections. The following lemma will be used to prove the uniqueness of strong solutions for system \eqref{tro_model_fin_intro}.

\begin{lemma}(\cite{LT})\label{lemma_uniqueness}
    Given a positive time $\mathcal{T}$, and let $m_1,m_2$ and $S>0$ be non-negative functions on $(0,\mathcal{T})$ such that
    \begin{equation*}
        m_1, S \in L^1((0,\mathcal{T})), \qquad m_2 \in L^2((0,\mathcal{T})), \qquad S>0 \; a.e. \; on \; (0,\mathcal{T}).
    \end{equation*}
    Suppose that $f$ and $G$ are two non-negative functions on $(0,\mathcal{T})$, with $f$ being absolutely continuous on $[0,\mathcal{T})$, that satisfy
    \begin{equation*}
         \left\{ \begin{aligned}
             & f'(t) + G(t) \leq m_1(t)f(t) + m_2\left[f(t) G(t) \log^+ \left( \frac{S(t)}{G(t)}\right) \right]^{\frac{1}{2}} \; a.e. \; on \; (0,\mathcal{T}) \\
             &f(0)=0
         \end{aligned} \right.
    \end{equation*}
    where $\log^+z = \max\left\{0, \log z\right\}$ for $z \in (0,\infty)$, and when $G(t)=0$, at some time $t \in [0,\mathcal{T})$, we adopt the following natural convection
    \begin{equation}
        G(t)\log^+ \left( \frac{S(t)}{G(t)} \right) = \lim_{z \rightarrow0^+}\log^+ \left( \frac{S(t)}{z}\right).
    \end{equation}
    Then, we have $f \equiv 0$ on $[0,\mathcal{T})$.
\end{lemma}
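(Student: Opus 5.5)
The plan is to turn the differential inequality into an Osgood-type inequality for $f$ alone, by eliminating $G$ through a pointwise maximization, and then to conclude with an Osgood-type argument.

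\textbf{Step 1: eliminate $G$.} Set $\phi(G)=G\log^+(S/G)$. On $0<G<S$ this is increasing for $G<S/e$ and bounded by $S/e$ overall. Apply Young's inequality to the last term:
\[
m_2\big[f\,\phi(G)\big]^{1/2}\le \tfrac12\,\phi(G)\,\delta+\tfrac{1}{2\delta}\,m_2^2 f
\]
for a parameter $\delta$. This alone does not absorb $\phi(G)$ into $G$, because of the logarithm. Instead we split into two cases.

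If $G\ge \lambda$, where the threshold $\lambda$ is chosen below, then $\log^+(S/G)\le \log^+(S/\lambda)$. Hence
\[
m_2\sqrt{fG\log^+(S/G)}\le \tfrac12 G+\tfrac12 m_2^2 f\log^+(S/\lambda).
\]
If $G<\lambda$, then $\phi(G)\le \max\{\lambda\log^+(S/\lambda),\,S/e\}$. Since $\phi$ is increasing on $(0,S/e)$, taking $\lambda\le S/e$ gives $\phi(G)\le \lambda\log(S/\lambda)$. The natural convention makes the case $G=0$ harmless, since then $\phi=0$. Therefore
\[
m_2\sqrt{f\phi(G)}\le m_2\sqrt{f\lambda\log(S/\lambda)}.
\]

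\textbf{Step 2: choose the threshold.} We want the two bounds to balance, so we choose $\lambda=\min\{f,\,S/e\}$ wherever $f>0$. Dropping $G\ge0$ on the left, in both cases we obtain
\[
f'\le m_1 f + C\big(m_2^2+m_2\big)\, f\Big(1+\log^+\tfrac{S}{f}\Big)+ C\,m_2 f ,
\]
after checking the case $f>S/e$ separately. In that case $\lambda=S/e$ and $\lambda\log(S/\lambda)=S/e<f$, which gives a term $\le m_2 f$, and also $\log^+(S/\lambda)=1$. Writing $\log^+(S/f)\le \log^+ S+\log(1/f)$ for small $f$, we arrive at
\[
f'\le a(t)\,f\big(1+\log^+ S(t)\big)+b(t)\,f\log\tfrac1f \quad\text{whenever } 0<f<1 ,
\]
with $a,b\in L^1$. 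Integrability holds because $m_2^2\in L^1$, $m_2\in L^2\subset L^1$, and $m_2^2\log^+S\le m_2^2+\dots$ is not automatically integrable. This is the main obstacle, and I would handle it by keeping $S$ inside the logarithm. Substitute $g=f/S$, or better work with the quantity $F=f/(e\,\Sigma)$, where $\Sigma(t)=\int_0^t S+1$. Alternatively, use the bound $m_2^2 f\log(S/f)$ and note that $f\log(S/f)\le f\log(1/f)+S/e$ does not help either, since it is not linear in $f$. So the cleanest route is the following. Set $y=-\log f$ on the set where $0<f<\min(1,\cdot)$. The inequality becomes $-y'\le a+b\,(\log^+S+y)$, i.e. $y'\ge -\tilde a - b\,y$ with $\tilde a=a+b\log^+S$. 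Since $\log^+S\le S$, we get $m_2^2\log^+S\le m_2^2+\dots$; if necessary I would instead use Hölder on the term $m_2\sqrt{f\lambda\log(S/\lambda)}$ so that only $m_2\cdot\sqrt{\log^+S}$ appears, which is integrable by Cauchy--Schwarz since $m_2\in L^2$ and $\log^+S\le S\in L^1$.

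\textbf{Step 3: Osgood conclusion.} Suppose $f(t_1)>0$ for some $t_1$, and let $t_0<t_1$ be the last time with $f(t_0)=0$; by absolute continuity we have $f(0)=0$. On $(t_0,t_1]$ the function $y=-\log f$ satisfies the linear differential inequality $y'\ge -\tilde a-by$ with $\tilde a,b\in L^1$. By Grönwall, $y$ stays bounded above on $(t_0,t_1]$. This contradicts $y\to+\infty$ as $t\to t_0^+$. Hence $f\equiv0$. The equivalent Osgood formulation is that $\int_0 \frac{dr}{r\log(1/r)}=\infty$.
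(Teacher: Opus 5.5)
The paper gives no proof of this lemma (it is quoted from \cite{LT}), so I assess your argument on its own. Steps 1 and 3 are sound. Step 2 has a genuine gap, located exactly at the obstacle you flag, and none of your suggested remedies resolves it.

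With the threshold $\lambda=\min\{f,S/e\}$, the branch $G\ge\lambda$ produces $\tfrac12 m_2^2 f\log^+(S/f)$, i.e.\ a coefficient $m_2^2\log^+S$ in front of $f$. This need not be integrable: for $m_2^2=S=1/(t\log^2(1/t))$ near $t=0$ both functions are in $L^1$, while $m_2^2\log S\sim 1/(t\log(1/t))$ is not.

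The loss is irreversible; no cleverer Osgood step can recover it. For these $m_2,S$ and any $c>0$, the function $f=(\log(1/t))^{-\alpha}$ with $0<\alpha<c$ satisfies $f'\le c\,m_2^2 f\log^+(S/f)$ near $t=0$. It is absolutely continuous and has $f(0)=0$, yet $f\not\equiv0$.

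Your remedies do not work:
\begin{itemize}
\item $\log^+S\le S$ only gives $m_2^2S\notin L^1$.
\item The Cauchy--Schwarz bound for $m_2\sqrt{\log^+S}$ only treats the branch $G<\lambda$, whose coefficient is $m_2$ rather than $m_2^2$.
\item $f/S$ is not even absolutely continuous, since $S$ is merely $L^1$.
\end{itemize}
The underlying reason is that the $G$ maximizing $m_2\sqrt{f\phi(G)}-G$ is of order $m_2^2f\log\big(S/(m_2^2f)\big)$, not $f$. Replacing $\log^+(S/G)$ by $\log^+(S/f)$ is therefore far too crude when $m_2$ is large.

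The missing idea is to scale the threshold by $m_2^2$: take $\lambda=m_2^2f$ and $L=\log^+\big(S/(m_2^2f)\big)$. There are three cases.
\begin{itemize}
\item If $G\ge\lambda$, Young gives $m_2\sqrt{fGL}\le \tfrac12G+\tfrac12m_2^2fL$.
\item If $G<\lambda\le S/e$, monotonicity of $\phi$ gives $\phi(G)\le\phi(\lambda)=m_2^2fL$ with $L\ge1$, so the term is at most $m_2^2fL$.
\item If $G<\lambda$ and $\lambda>S/e$, then $\phi(G)\le S/e<m_2^2f$, so the term is at most $m_2^2f$.
\end{itemize}
Hence in all cases
\[
f'+\tfrac12G\le m_1f+m_2^2f(1+L),\qquad L\le\log^+\frac{S}{m_2^2}+\log^+\frac1f .
\]
The decisive observation is $m_2^2\log^+(S/m_2^2)=\phi(m_2^2)\le S/e$. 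Therefore
\[
f'\le\Big(m_1+m_2^2+\frac{S}{e}\Big)f+m_2^2\,f\log^+\frac1f ,
\]
with both coefficients in $L^1(0,\mathcal{T})$. Your Step 3, applied to $y=\log(1/f)$ on an interval where $0<f<1$, now closes the argument.

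Finally, you silently used $\phi(0)=0$, i.e.\ $\lim_{z\to0^+}z\log^+(S/z)=0$. That is the intended convention. The statement as printed reads $\lim_{z\to0^+}\log^+(S/z)=+\infty$, and under that reading the lemma is false: take $G\equiv0$, $m_1\equiv0$, $m_2\equiv S\equiv1$, $f(t)=t$. You should state explicitly which version you prove.
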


\begin{lemma}\label{egoroff}
    Let $\Omega \subseteq \mathbb{R}^d$ be a measurable set of positive measure and $f$ be a measurable function defined on $\Omega$. Suppose that for any positive number $\eta$ there is a measurable subset $E_\eta$ of $\Omega$ with $|E_\eta| \leq \eta$, such that $f=0$ a.e on $\Omega \setminus E_\eta$. Then, $f=0$ a.e on $\Omega$.
\end{lemma}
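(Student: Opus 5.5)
We argue by contradiction, using only the countable additivity of Lebesgue measure. The plan is to express the set where $f\neq 0$ as a subset of the sets $E_\eta$ up to null sets, and then let $\eta\to 0$.

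First, set $N=\{x\in\Omega : f(x)\neq 0\}$. This set is measurable because $f$ is measurable. For each $\eta>0$, the hypothesis says that $f=0$ a.e. on $\Omega\setminus E_\eta$. Equivalently, the set $Z_\eta = N\cap(\Omega\setminus E_\eta)$ has measure zero. Hence
\[
N \subseteq E_\eta \cup Z_\eta, \qquad |N| \le |E_\eta| + |Z_\eta| \le \eta .
\]
Since $\eta>0$ is arbitrary, $|N|=0$, which is exactly the statement that $f=0$ a.e. on $\Omega$.

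If one prefers to avoid the arbitrary parameter, the same argument can be run along the sequence $\eta_k=2^{-k}$. Define $Z=\bigcup_k Z_{\eta_k}$, a countable union of null sets, so $|Z|=0$. Then $N\setminus Z\subseteq\bigcap_k E_{\eta_k}$, and this intersection has measure at most $2^{-k}$ for every $k$, hence measure zero.

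There is no genuine obstacle. The only point needing care is measurability, both of $N$ and of the exceptional null sets. Measurability of $N$ follows from that of $f$. The exceptional sets are null sets in Lebesgue measure, and subsets of null sets are measurable by completeness, so the monotonicity and subadditivity used above are legitimate. The hypothesis $|\Omega|>0$ plays no role beyond making the statement nontrivial.
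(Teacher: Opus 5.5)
Your argument is correct and complete. With $N=\{x\in\Omega: f(x)\neq 0\}$, the inclusion $N\subseteq E_\eta\cup\bigl(N\cap(\Omega\setminus E_\eta)\bigr)$, together with the fact that the second set is null, gives $|N|\le\eta$ for every $\eta>0$, hence $|N|=0$. Your variant along $\eta_k=2^{-k}$ is also correct.

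The paper gives no proof to compare against. The lemma is listed among the recalled technical results of Section~\ref{sec:2} without a proof or a reference. Your monotonicity/subadditivity argument is the standard one, and it covers how the lemma is used in Section~\ref{sec:6}: $\Omega=O^{\pm}_{jkl}$, the $E_\eta$ are the Egoroff exceptional sets, and the limit function $\mathcal{G}$ does not depend on $\eta$.

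Two cosmetic points:
\begin{itemize}
\item You announce a proof by contradiction but then give a direct proof.
\item The appeal to completeness of Lebesgue measure is unnecessary. $Z_\eta=N\cap(\Omega\setminus E_\eta)$ is already measurable as an intersection of measurable sets.
\end{itemize}
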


\begin{lemma}\label{A3}
Given a time $\mathcal{T} \in (0,\infty)$ and a number $p \in (1, \infty)$, let us consider $f \in L^2((0,\mathcal{T});L^p(\mathbb{R}^2))$ and $v$ be the unique solution to
\begin{equation*}
    \left\{ \begin{aligned}
        & \partial_t v - \Delta v = f \\
        & v|_{t=0} = v_0 \in H^1(\mathbb{R}^2).
    \end{aligned} \right.
\end{equation*}
Then we have the following estimate
\begin{equation*}
    \int_0^\mathcal{T} \|\Delta v\|_p dt \leq C_p( 1 + \sqrt{\mathcal{T}}) \left[ \|\nabla v_0\|_2 + \left( \int_0^\mathcal{T} \|f\|_p^2 dt\right)^{\frac{1}{2}} \right]
\end{equation*}
where $C_p$ is a positive constant depending only on $p$, and in particular is independent of $\mathcal{T}$, $f$ and $v_0$.
\end{lemma}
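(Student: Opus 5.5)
We split $v=v_1+v_2$, where $v_1=e^{t\Delta}v_0$ solves the homogeneous heat equation with data $v_0$ and $v_2$ solves $\partial_t v_2-\Delta v_2=f$ with $v_2|_{t=0}=0$. We estimate $\int_0^{\mathcal T}\|\Delta v_i\|_p\,dt$ separately.

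\textbf{The forced part $v_2$.} We use maximal $L^2_tL^p_x$ regularity of the heat semigroup on $\mathbb{R}^2$. This holds for every $p\in(1,\infty)$, because $-\Delta$ has a bounded $H^\infty$-calculus on $L^p$ (equivalently, by the Calder\'on--Zygmund theory for the parabolic kernel). Its constant $C_p$ does not depend on $\mathcal T$, since the operator is homogeneous and the estimate holds on the half line $(0,\infty)$. It gives
\[
\Big(\int_0^{\mathcal T}\|\Delta v_2\|_p^2dt\Big)^{1/2}\le C_p\Big(\int_0^{\mathcal T}\|f\|_p^2dt\Big)^{1/2},
\]
and Cauchy--Schwarz in time then yields the bound $C_p\sqrt{\mathcal T}\,\|f\|_{L^2_tL^p_x}$. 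This part has no restriction on $p$.

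\textbf{The free part $v_1$.} We write $\Delta v_1=\operatorname{div} e^{t\Delta}\nabla v_0$ and use the smoothing estimate
\[
\|\nabla e^{t\Delta}g\|_p\le Ct^{-\frac12-(\frac12-\frac1p)}\|g\|_2,\qquad p\ge 2 .
\]
The exponent equals $1-1/p<1$, so the bound is integrable near $t=0$, and
\[
\int_0^{\mathcal T}\|\Delta v_1\|_p\,dt\le C_p\,\mathcal T^{1/p}\|\nabla v_0\|_2\le C_p(1+\sqrt{\mathcal T})\|\nabla v_0\|_2,
\]
where the last step uses $\mathcal T^{1/p}\le 1+\sqrt{\mathcal T}$ for $p\ge 2$. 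Adding the two bounds proves the lemma for $p\in[2,\infty)$.

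\textbf{The range $1<p<2$ (main obstacle).} The $L^2\to L^p$ smoothing of the heat semigroup fails when $p<2$: a datum with $\nabla v_0\in L^2$ carries low-frequency mass that is not controlled in $L^p$. My first attempt would be to split $v_0$ into its low- and high-frequency parts with a Littlewood--Paley cut at frequency $\sim 1$.
\begin{itemize}
\item For the high-frequency part, $\|\Delta e^{t\Delta}P_{>1}v_0\|_p$ can be bounded by $\|\nabla v_0\|_2$ only after localizing in space, so this piece would require some spatial decay.
\item For the low-frequency part, I would try to show $\|\Delta e^{t\Delta}P_{\le1}v_0\|_p\lesssim t^{-1+\frac1p}\|P_{\le 1}\nabla v_0\|_{\dot B^{2/p-1}_{p,\infty}}$.
\end{itemize}
These norms are not dominated by $\|\nabla v_0\|_2$ when $p<2$. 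A test datum with $|\widehat{\nabla v_0}(\xi)|\sim |\xi|^{-1}\big(\log(1/|\xi|)\big)^{-1}$ near $0$ suggests the inequality for $v_1$ may genuinely fail in this range.

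So, beyond trying the frequency decomposition above, I would verify whether the estimate for $1<p<2$ requires $v_0$ to have extra integrability (for instance $\nabla v_0\in L^p$, in which case the bound for $v_1$ follows directly from $\|\Delta e^{t\Delta}g\|_p\le Ct^{-1/2}\|\nabla g\|_p$ integrated up to $\mathcal T$). I would also check whether the subsequent applications in the paper only use $p\ge 2$. I expect this step, not the maximal regularity part, to be where the argument for the full stated range succeeds or fails.
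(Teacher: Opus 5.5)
The paper does not prove this lemma. It quotes it from \cite{LT} and uses it only once, with $p=4$, in Proposition~\ref{grad_infty}.

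\textbf{The range $p\in[2,\infty)$.} Your argument here is correct and complete. You split $v$ by Duhamel into a forced part and a free part. For the forced part, $\mathcal{T}$-independent maximal $L^2_tL^p_x$ regularity followed by Cauchy--Schwarz in time gives the $\sqrt{\mathcal{T}}$ term. For the free part, you use $\|\Delta e^{t\Delta}v_0\|_p=\|\dive e^{t\Delta}\nabla v_0\|_p\le Ct^{-(1-1/p)}\|\nabla v_0\|_2$. This follows from Young's inequality with $\nabla K_t\in L^r$, $\frac1r=\frac12+\frac1p$. Its time integral is $C_p\mathcal{T}^{1/p}\|\nabla v_0\|_2\le C_p(1+\sqrt{\mathcal{T}})\|\nabla v_0\|_2$. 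This covers everything the paper actually needs.

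\textbf{The range $1<p<2$.} You should stop looking for a proof here, because the estimate as stated is false and no frequency decomposition will rescue it.
\begin{itemize}
\item Take $f=0$, $\mathcal{T}=1$, a fixed $0\neq\phi\in C_c^\infty(\mathbb{R}^2)$, and set $a:=\int_0^1\|\Delta e^{t\Delta}\phi\|_p\,dt$. Then $a\in(0,\infty)$, since $\|\Delta e^{t\Delta}\phi\|_p\le\|\Delta\phi\|_p$ and $\Delta e^{t\Delta}\phi\not\equiv0$.
\item Let $v_0=\sum_{i=1}^N\phi(\cdot-x_i)\in H^1(\mathbb{R}^2)$ with disjointly supported translates. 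Then $\|\nabla v_0\|_2=\sqrt N\,\|\nabla\phi\|_2$.
\item As $\min_{i\neq j}|x_i-x_j|\to\infty$, you get $\int_0^1\|\Delta e^{t\Delta}v_0\|_p\,dt\to N^{1/p}a$. Well-separated translates of a fixed $L^p$ function decouple in $L^p$, and dominated convergence handles the time integral.
\item The lemma would then force $N^{1/p-1/2}\le 2C_p\|\nabla\phi\|_2/a$ for every $N$, which is impossible when $p<2$.
\end{itemize}

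The obstruction is exactly the lack of spatial localization you flagged in your first bullet. It is a translation-invariance effect, already present at unit frequency. Low frequencies are not the problem: a single dilated bump $\phi(\lambda x)$ with $\lambda\to0$ has fixed $\|\nabla\phi(\lambda\cdot)\|_2$ but contributes only $O(\lambda^{2-2/p})$ to the left-hand side. Your logarithmic test datum is therefore the wrong candidate. It is also not in $H^1$, since $\hat v_0\sim|\xi|^{-2}\log^{-1}(1/|\xi|)$ is not square integrable near the origin.

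The correct statement is the lemma restricted to $p\in[2,\infty)$, or for $p<2$ with the extra hypothesis $\nabla v_0\in L^p$ that you suggest. Your proof establishes the $p\in[2,\infty)$ version, which is all the paper uses.
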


\section{Tropical climate model with saturation and humidity}\label{sec3}

The incompressible viscous Primitive Equations in the layer $\mathbb{R}^2 \times (0,H)$, where $H>0$ denote the thickness of the troposphere are the following,
\begin{equation}\label{pe-lt}
\left\{\begin{aligned}
		& \partial_t V + V \cdot \nabla V + W\partial_z V - \mu \Delta V - \nu \partial_z^2 V + \nabla \Phi =0\\
  & \dive V + \partial_z W = 0 \\
		&  \partial_z \Phi = \frac{g}{\theta_0}\theta \\
	&\partial_t \theta + V \cdot \nabla \theta + W \partial_z \theta + \frac{N^2\theta_0}{g}W = S_{\theta} \\
 &\partial_t Q + V \cdot \nabla Q + W \partial_z Q = - P 
		\end{aligned}\right.
\end{equation}
where $V=(V_1,V_2)$, $U=(V,W)$ is the velocity field, $\Phi$ the pressure, $Q$ the moisture, $P=P(x,y,t)$ the precipitation term, $S_{\theta}$ the temperature source which combines the heat flux with the precipitation. We recall that the total potential temperature $ \theta^{total}$ (full temperature including the background state) is given by
\begin{equation*}
    \theta^{total} = \theta_0 + \bar{\theta}(z) + \theta (x,y,z,t)
\end{equation*}
where $\theta_0>0$ is a mean basic state constant temperature,  $ \theta$ is the perturbation temperature, $\bar{\theta}(z)$ is the vertical profile background stratification which satisfies $N^{2}=\bar{\theta}(z)>0$ where $N$ is the Brunt-Vaisala buoyancy frequency which we assume to be constant. 
\subsection{Galerkin expansion}
In order to study the tropical-extra tropical interactions, following the work of \cite{FM} and \cite{LT} we perform the Galerkin projection of the primitive equations in the vertical direction onto the barotropic mode and the first baroclinic mode. 

We begin by performing a vertical decomposition of the variables in barotropic and baroclinic parts as:
\begin{equation}\label{decomp}
\left\{\begin{aligned}
    & V = u+ u' \\
    & \Phi = p + p' \\
    & Q= q+ q' \\
    & W = w' \\
    & \theta = T',
    \end{aligned}\right.
\end{equation}
where the depth-independent barotropic modes  are given by
\begin{equation*}
    u = < V, 1> ,\qquad p = < \phi, 1>,\qquad q = < Q, 1>
\end{equation*}
and  the baroclinc modes $(u',q',p')$ satisfy
\begin{equation*}
    < u', 1> =< q',1> = < p',1>= 0.
\end{equation*} 
The boundary conditions assumed here are no normal flow at the top and bottom of the troposphere, 
\begin{equation}
w'=0 \qquad \text{ at } z=0, H.
\end{equation}
From the hydrostatic relation $\eqref{pe-lt}_3$ we easily get:
\begin{equation}\label{newhydro}
    \frac{\partial p'}{\partial z} = \frac{g}{\theta_0} T'.
\end{equation}
The projected free tropospheric nonlinear dynamics obtained by substituing \eqref{decomp} in \eqref{pe-lt} are given by

\begin{equation}\label{baro}
\left\{\begin{aligned}
    &\frac{\bar{D}u}{Dt} + < u' \nabla u', 1> + <w'\frac{\partial v}{\partial z},1> = -\nabla p + \mu \Delta u  \\
    & \partial_t q + u \nabla q + < u' \nabla q', 1 > + < w' \frac{\partial q'}{\partial z}, 1> = -<P,1>,
    \end{aligned}\right.
\end{equation}
where we define
\begin{equation*}
    \frac{\bar{D}}{Dt} = \partial_t + \bar u \cdot \nabla.
\end{equation*}
The baroclinic dynamics is computed by taking the difference between \eqref{pe-lt} written in terms of the decomposition \eqref{decomp} and the projected barotropic part \eqref{baro}. We end up with
\begin{equation}\label{barocli}
    \left\{\begin{aligned}
    &\frac{\bar{D}u'}{Dt} + u' \nabla u + u' \nabla u' - < u' \nabla u', 1> + w' \frac{\partial u'}{\partial z} - <w' \frac{\partial u'}{\partial z},1> = -\nabla p' + \Delta u'  \\
   & \frac{\bar{D}T'}{Dt} + u' \nabla T' + w' \partial_z T' + \frac{N^2\theta_0}{g} w' = S_{T'} \\
   &  \partial_t q' + u \nabla q' + u' \nabla q + u' \nabla q' - < u'\nabla q', 1> + w' \partial_z q' - < w' \partial_z q'> = 0
    \end{aligned}\right.
\end{equation}
Finally we perform a  truncated Galerkin expansion of the form, 
\begin{equation}\label{gal}
\begin{aligned}
		& \begin{pmatrix}
  V \\
  \Phi \\
  Q \end{pmatrix} = \begin{pmatrix}
  u \\
  p  \\
  q \end{pmatrix} (x,y,t) + \begin{pmatrix}
  v (x,y,t) \\
  p_1 (x,y,t) \\
  \bar{Q}\end{pmatrix}  \sqrt{2}\cos(\pi z /H)\\
  & \begin{pmatrix}
  W \\
  \theta \end{pmatrix} = \begin{pmatrix}
  w \\
  T \end{pmatrix} (x,y,t) \sqrt{2}\sin(\pi z /H) 
		\end{aligned}
\end{equation}
where $\bar{Q}>0$ is the prescribed gross moisture stratification (for example see \cite{LT} for detailed description) and the baroclinic terms are defined as
\begin{equation*}
     \begin{aligned}
        & u'= v\sqrt{2}\cos \left( \frac{\pi z}{H} \right), \qquad & w'= w\sqrt{2}\sin \left( \frac{\pi z}{H} \right), \\
        & p'= p_1\sqrt{2}\cos \left( \frac{\pi z}{H} \right),  \qquad &T'=T\sqrt{2}\sin \left( \frac{\pi z}{H} \right),
    \end{aligned}
\end{equation*}
$p'$ denotes the mean zero component of the pressure. Moreover, we impose that 
$$S_{T'} = S_T(x,y,t) \sqrt{2}\sin (\pi z/H),$$
see \cite{SM}. 
In view of the expansion \eqref{gal}, we project in the vertical direction equations \eqref{baro} and \eqref{barocli}.

\subsection*{Continuity equation}
Let us start with the continuity equation, by using \eqref{gal} we get:
\begin{equation}\label{cont}
    \left\{\begin{aligned}
& \dive u = 0 \\
& w = -\frac{H}{\pi} \dive v 
    \end{aligned}\right.
\end{equation}

\subsection*{Hydrostatic balance}
Using $\eqref{gal}$ in $\eqref{newhydro}$ the hydrostatic balance becomes:
\begin{equation}\label{hydro}
    p_1 = - \frac{H}{\pi} \frac{g}{\theta_0} T.
\end{equation}

\subsection*{Momentum equation of barotropic velocity}
In view of \eqref{gal} we compute the projected terms in $\eqref{baro}_1$ and we get
\begin{equation}\label{barovel}
   \partial_t u + u \cdot \nabla u - \mu \Delta u + \nabla p + \dive (v \otimes v) = 0
   \end{equation}
\subsection*{Moisture equation}
Using $\eqref{gal}$ in $\eqref{baro}_2$ the moisture equation becomes:
\begin{equation}\label{gen_most}
   \partial_t q + u\nabla q + \bar{Q}\dive v = -P
\end{equation}
since $<P,1> = P$, $\nabla q'=0$ and 
\begin{equation*}
    < w' \frac{\partial q'}{\partial z}, 1 > = \frac{2}{H} \int_0^H \bar{Q} \dive v \sin^2\left(\frac{\pi z}{H} \right) dz = \bar{Q} \dive v.
\end{equation*}
Following the derivation of \cite{FM} we ignore the third equation in \eqref{barocli}, namely the equation of the baroclinic part of the moisture. 

\subsection*{Baroclinic velocity and first law of thermodynamics}
To derive the reduced dynamics for the baroclinic velocity $u'$ and the temperature $T'$ we take the
inner product of equations $\eqref{barocli}_1$, $\eqref{barocli}_2$  with $\sqrt{2} \cos \left( \frac{\pi z}{H} \right)$ and $\sqrt{2} \sin \left( \frac{\pi z}{H} \right)$ respectively, we get
\begin{equation}\label{baroclivel}
    \frac{\bar{D}v}{Dt} + v \cdot \nabla u = \frac{H}{\pi} \frac{g}{\theta_0} \nabla T + \mu \Delta v,
\end{equation}
and
\begin{equation*}
    \frac{\bar{D}T}{Dt} - \frac{H}{\pi}\frac{\theta_0 N^2}{g}\dive v = S_{T},
\end{equation*}
since $< S_{T'}, \sqrt{2} \sin \left( \frac{\pi z}{H} \right)> = S_T$. In order to close system one still needs to parameterize the source term  in the temperature equation $S_T$ and the precipitation $P$ in the moisture equation.

\subsection{Precipitation parameterization}\label{precipit}
The saturated adiabatic lapse rate is fundamental in understanding the structure and evolution of clouds, as well as the associated weather conditions. It influences atmospheric stability, precipitation, and other atmospheric phenomena. Air is considered saturated when the vapor pressure of humidity reaches its maximum limit, to the point where it cannot hold more water vapor without resulting in condensation, as in the case of cloud or fog formation. In this context, we will focus on adiabatic processes, meaning situations in which there is no heat exchange with the external environment. When a mass of air rises, it expands and cools; conversely, when it descends, it undergoes compression and warms up. We know from \cite{CZ} (and references therein) that the general form of moisture equation is the following:
\begin{equation}
    \frac{Dq}{Dt} = \frac{Dq_s}{Dt} + D_q=P
    \label{eq. for P}
\end{equation}
where \(q\) denotes the specific humidity, that is the ratio between the mass of water vapor and the total mass of moist air, and thus it is always assumed that \(q \geq 0\). The term $q_s$ denotes the saturation concentration (or saturation humidity) and $D_q$ is a suitable form of dissipation accounting for horizontal and vertical diffusion. 
For simplicity we impose from now on $D_q=0$, therefore our aim is to explicit through thermodynamic relations the form of $Dq_s/Dt$, hence of the precipitation term $P$. 

We denote by $L$ the latent heat of evaporization, $R$ is the ideal gas constant, $c_p$ is the air specific heat capacity, $R^{\nu}$ the gas constants for water vapour, $k=R^d/R^\nu$, $p$ the total pressure and $e_s$ the partial pressure of water vapour in air that occurs when the vapour is in equilibrium with the liquid. If more vapour were added, it would immediately condense. Starting from Chapter 18 in \cite{Vallis}, in the context of atmospheric convection processes, we review the most important thermodynamic relations to explicitly express the equations satisfied by the humidity $q$ and the temperature $T$. Here, we recall that in a moist atmosphere the moist static energy is conserved as a parcel ascends and this allows us to write down the equation satisfied by the temperature $T$ as
\begin{equation*}
    c_p \frac{dT}{dz}= -L \frac{dq}{dz} - g.
\end{equation*}
If the air is moist but not saturated then an ascending parcel will follow the dry adiabatic lapse rate (because $dq/dz= 0$) but if it is saturated, namely $q = q_s$, then as a parcel ascends it will cool and some water vapour will condense. Since $q_s$ can be expressed as $q_s= k e_s/p$, and so is a function of temperature and pressure, we have
\begin{equation*}
    \frac{dq_s}{dz}= \left( \frac{dq_s}{dT}\right)_p\frac{dT}{dz} + \frac{q_s}{p} \rho g,
\end{equation*}
using hydrostasy. Using the Clausius–Clapeyron equation 
\begin{equation*}
    \frac{de_s}{dT} = \frac{Le_s}{R^{\nu} T^2},
\end{equation*}
and the ideal gas equation of state we get the relation between $T$ and $q$ of an adiabatically ascending saturated parcel, that is
\begin{equation*}
    \left\{\begin{aligned}
& \frac{dT}{dz} = - \frac{L}{c_p} \frac{dq_s}{dz}- \frac{g}{c_p}\\
&  \frac{dq_s}{dz} = \frac{L q_s}{R^{\nu} T^2} \frac{dT}{dz} + \frac{g q_s}{R T}.
    \end{aligned}\right.
\end{equation*}
By substituting $\displaystyle{\frac{dT}{dz}}$ in the equation of $\displaystyle{\frac{dq_s}{dz}}$ we get:
\begin{equation*}
    \frac{dq_s}{dz} = \frac{g}{R} q_s \left( \frac{c_p R^{\nu} T- LR}{c_p R^{\nu} T^2 + L^2 q_s} \right),
\end{equation*}
and thus, by classical chain rule, we easily obtain:
\begin{equation*}
    \frac{Dq_s}{Dt} = \frac{dq_s}{dz} \frac{Dz}{Dt} = w \frac{g}{R} q_s \left( \frac{c_p R^{\nu} T- LR}{c_p R^{\nu} T^2 + L^2 q_s} \right).
\end{equation*}
Hence from \eqref{eq. for P} we get
$$P=- \frac{H}{\pi} \dive v  \frac{g}{R} q_s \left( \frac{c_p R^{\nu} T-LR}{c_p R^{\nu} T^2 + L^2 q_s} \right), $$
where we used \eqref{cont}.
Finally, the equation for $q$ is given by, 
\begin{equation}\label{eqq}
    \partial_t q + u \nabla q + \bar{Q} \dive v = \frac{H}{\pi} \dive v  \frac{g}{R} q_s \left( \frac{c_p R^{\nu} T-LR}{c_p R^{\nu} T^2 + L^2 q_s} \right). 
\end{equation}
Now, by recalling  that the contribution of the source term $S_T$ is given by a diffusivity part (heat flux) and a precipitation term, namely, $S_T = \Delta T +  P$ (see also \cite{LT,SM}) we get
\begin{equation}\label{eqT}
    \partial_t T + u \nabla T - \Delta T -  \frac{H}{\pi}\frac{\theta_0 N^2}{g}\dive v = -\frac{H}{\pi} \dive v  \frac{g}{R} q_s \left( \frac{c_p R^{\nu} T-LR}{c_p R^{\nu} T^2 + L^2 q_s} \right).
\end{equation}
Finally, collecting \eqref{cont}, \eqref{barovel}, \eqref{baroclivel}, \eqref{eqq} and \eqref{eqT} the system for tropical atmosphere  which describes humidity and moisture  interactions becomes:
\begin{equation}\label{tro_model}
     \left\{\begin{aligned}
&\partial_t u + u \cdot \nabla u - \mu \Delta u + \nabla p + \dive (v \otimes v) = 0\\
& \partial_ t v + u \cdot \nabla v + v \nabla u - \mu \Delta v = \frac{H}{\pi} \frac{g}{\theta_0} \nabla T \\
& \dive u = 0 \\
& \partial_t T + u \nabla T - \Delta T -  \frac{H}{\pi}\frac{\theta_0 N^2}{g} \dive v = - \frac{H}{\pi} \dive v  \frac{g}{R} q_s \left( \frac{LR- c_p R^{\nu} T}{c_p R^{\nu} T^2 + L^2 q_s} \right) \\
& \partial_t q + u \nabla q + \bar{Q} \dive v = \frac{H}{\pi} \dive v  \frac{g}{R} q_s \left( \frac{LR- c_p R^{\nu} T}{c_p R^{\nu} T^2 + L^2 q_s} \right)
    \end{aligned}\right.
\end{equation}

\begin{remark}
We recall that \(\bar{Q}\) represents the gross moisture stratification and thus one has to assume that \(\bar{Q} > 0\). This condition is motivated by the fact that typical humidity measurements indicate an exponential decrease of moisture with altitude over scales of a few kilometers. The parameter \(\bar{Q}\) plays a fundamental role in atmospheric dynamics: a higher value of \(\bar{Q}\) tends to reduce stability in humid regions, influencing convection processes.
\end{remark}

\subsection{Equivalent formulation}\label{formal}
In order to simplify the formulation of \eqref{tro_model} we impose that $\mu=1$ and the humidity saturation parameter \( q_s \) is a constant and  $0<q_{s}<1$.

In our setting the precipitation production $P$  is given by
\begin{equation}\label{sourceq}
    P= - \frac{H}{\pi} \dive v  \frac{g}{R} q_s \left( \frac{LR- c_p R^{\nu} T}{c_p R^{\nu} T^2 + L^2 q_s} \right) 
    = -\frac{H}{\pi} \dive v  \frac{g}{RT} F(T) = -\frac{H}{\pi} \dive v  \frac{g}{R}G(T)
\end{equation}
where $F(T)$ is defined as
\begin{equation*}
    F(T)= q_s T \left( \frac{LR - c_p R^{\nu}T}{c_p R^{\nu}T^2 + q_sL^2} \right),
\end{equation*}
and satisfies the following properties,
\begin{align}\label{prop_F}
    & |F(\xi_1)-F(\xi_2)|\leq M_F |\xi_1 -\xi_2| \quad \text{ for every }\xi_1,\xi_2 \in \mathbb{R}. \\
    &|F(\xi)| \leq C_F \quad \text{ for every }  \xi \in \mathbb{R}  \nonumber \\
    & \text{ Since } F(0)=0 \text{ we have } |F(\xi)| \leq M_F|\xi| \quad \text{ for every } \xi \in \mathbb{R}  \nonumber \\
   & \text{ Since } F(\xi_0) = 0 \text{ where } \xi_0 = LR/c_pR_\nu \text{ then } F(\xi) \geq 0 \Longleftrightarrow \xi \in [0, \xi_0]  \nonumber.
\end{align}
Moreover the function $G(T)$ is defined as 
\begin{equation*}
    G(T):= \frac{F(T)}{T} = q_s \left( \frac{LR- c_p R^{\nu} T}{c_p R^{\nu} T^2 + L^2 q_s} \right),
\end{equation*}
and the following properties hold
    \begin{align}\label{prop_G}
    & |G(\xi_1)-G(\xi_2)|\leq M_G |\xi_1 -\xi_2| \quad \text{ for every }\xi_1,\xi_2 \in \mathbb{R}. \\
    &|G(\xi)| \leq C_G \quad \text{ for every }  \xi \in \mathbb{R}  \nonumber \\
    & \text{ Since } G(0)=\frac{R}{L} \text{ we have } \left |G(\xi) - \frac{R}{L}\right | \leq M_G|\xi| \quad \text{ for every } \xi \in \mathbb{R}  \nonumber \\
   & \text{ Since } G(\xi_0) = 0 \text{ where } \xi_0 = LR/c_pR_\nu \text{ then } G(\xi) \geq 0 \Longleftrightarrow \xi \in (- \infty, \xi_0]  \nonumber.
\end{align}
When the air is humid, the convection process, which is the vertical movement of air, occurs more easily compared to dry air. This is because when the water vapor present in the air condenses, changing from a gaseous to a liquid state, it releases heat called ``latent heat''. This heat helps to warm the surrounding air, thus facilitating its uplift. However, this condition only occurs when the air is saturated. In other words, only when the air is fully moist and saturated the heat released during condensation can influence the stability of the air and promote the convection process. Therefore one assumes that the contribution by $G(T)$ in \eqref{sourceq} applies only during condensation, namely when $q \geq q_s$ and requires the upward motion ($\frac{dz}{dt}> 0$ or equivalently $\omega := \frac{dp}{dt}<0$ in pressure coordinates, see \cite{CZ} for instance). Since,
\begin{equation*}
    \frac{dz}{dt} = w = -\frac{H}{\pi} \dive v \quad \text{ then } \quad w> 0 \Longleftrightarrow \dive v < 0,
\end{equation*}
we can write \eqref{sourceq} in this way,
\begin{equation}\label{sourceq1}
   P = \delta \frac{Hg}{\pi R}(\dive v)^{-} vG(T),
\end{equation}
where $f^-=\max \{-f,0 \}$ denotes the negative part of $f$ and $\delta$ is defined as
\begin{equation*}
        \delta = \left \{ \begin{aligned} 
         1 \qquad & q \geq q_s \qquad \text{and } \dive v \leq 0\\
        0 \qquad & \text{ otherwise. }
        \end{aligned}\right.
    \end{equation*}
In this paper we make use of the equivalent formulation of \eqref{sourceq1} given by 
\begin{equation}\label{sourceq2}
    P = \frac{Hg}{\pi R}(\dive v)^{-} \mathcal{H}(q-q_s) G(T),
\end{equation}
where $\mathcal{H}(q-q_s)$ is the Heaviside multivalued function defined as,
\begin{equation}\label{heaviside}
        \mathcal{H}(r) = \left \{ \begin{aligned} 
         0 \qquad & r<0 \\
        (0,1) \qquad & r=0 \\
        1 \qquad &r>0
        \end{aligned}\right.
    \end{equation}
Moreover, we replace $G(T)$ by its positive part $G^+(T)$. As we will see later on, this assumption plays a central role in order to prove the existence and uniqueness of strong solutions for system \eqref{tro_model_fin}. Taking into account  the property $\eqref{prop_G}_{4}$ this requirement is linked with an upper bound of the temperature $T$,
\begin{equation}
\label{bound T}
T\leq \xi_0 = \frac {LR}{c_pR_\nu}.
\end{equation}

The condition mentioned above is not overly restrictive. In Section \ref{sec:6}, by combining the a priori estimates from Section \ref{sec:5} and the parabolic structure of the temperature balance equation, we can derive that $T$ is uniformly bounded in space and time (see \eqref{eq:finitet}). In particular, by applying the classical maximum principle, we can conclude that if we choose an initial datum satisfying \eqref{bound T}, then the same holds for $T$. We emphasize that the condition \eqref{bound T} will always be satisfied by an initial temperature since
$$ \xi_0 = \frac {LR}{c_pR_\nu}\simeq1548 \ \text{Kelvin},$$
which is never achievable for atmosphere temperatures.

Finally the system \eqref{tro_model} can be rewritten as follows

\begin{equation}\label{tro_model_fin}
     \left\{\begin{aligned}
&\partial_t u + u \cdot \nabla u - \Delta u + \nabla p + \dive (v \otimes v) = 0\\
& \partial_ t v + u \cdot \nabla v + v \nabla u - \Delta v = \frac{H}{\pi} \frac{g}{\theta_0} \nabla T \\
& \dive u = 0 \\
& \partial_t T + u \nabla T - \Delta T - \frac{H}{\pi}\frac{\theta_0 N^2}{g}  \dive v = \frac{H g}{\pi R}(\dive v)^- \mathcal{H}(q-q_s) G^+(T). \\
& \partial_t q + u \nabla q + \bar{Q} \dive v = -  \frac{H g}{\pi R}(\dive v)^- \mathcal{H}(q-q_s) G^+(T).
    \end{aligned}\right.
\end{equation}

\begin{remark}
    We point out that in order to deal with the discontinuous right hand side of the equations for $T$ and $q$ in  \eqref{tro_model_fin}, one can rephrase it as a differential inclusions (see \cite{CZ}). Therefore the system  \eqref{tro_model_fin} becomes,  
    \begin{equation}\label{tro_model_Te}
     \left\{\begin{aligned}
&\partial_t u + u \cdot \nabla u - \Delta u + \nabla p + \dive (v \otimes v) = 0\\
& \partial_ t v + u \cdot \nabla v + v \nabla u - \Delta v = \frac{H}{\pi} \frac{g}{\theta_0} \nabla T \\
& \dive u = 0 \\
& \partial_t T + u \nabla T - \Delta T -\frac{H}{\pi}\frac{\theta_0 N^2}{g} \dive v \in  \frac{H g}{\pi R}(\dive v)^- \mathcal{H}(q-q_s) G^+(T) \\
& \partial_t q + u \nabla q + \bar{Q} \dive v \in -  \frac{H g}{\pi R}(\dive v)^- \mathcal{H}(q-q_s) G^+(T).
    \end{aligned}\right.
\end{equation}
\end{remark}

\section{Main results}\label{sec:mainresults}
In this section, we will present the main results within the context of global strong solutions, which we define as follows.

\begin{definition}\label{def_strong}
Given a positive time $\mathcal{T}$ and initial data $(u_0 , v_0 , T_{0}, q_0) \in H^1(\mathbb{R}^2)$, then $(u,v,T,q)$ is called strong solution to system \eqref{tro_model_fin}  if it enjoys the following properties
\begin{equation*}
    \begin{aligned}
        & (u,v,T) \in C([0,\mathcal{T}]; H^1(\mathbb{R}^2)) \cap L^2((0,\mathcal{T}); H^2(\mathbb{R}^2));\\
        &(\partial_t u, \partial_t v, \partial_t T, \partial_t q) \in L^2((0,\mathcal{T}); L^2(\mathbb{R}^2)); \\
        &q \in C([0,\mathcal{T}]; L^2(\mathbb{R}^2)) \cap L^{\infty}((0,\mathcal{T}); H^1(\mathbb{R}^2)),
    \end{aligned}
\end{equation*}
has initial value
\begin{equation*}
    (u,v,T,q)\big|_{t=0} = (u_0 , v_0 , T_0, q_0)
\end{equation*} and satisfies the system \eqref{tro_model_fin} a.e on $\mathbb{R}^2 \times (0, \mathcal{T})$. More precisely, $T$ and $q$ satisfy a.e on $\mathbb{R}^2 \times (0, \mathcal{T})$ the following equations:
\begin{equation}\label{eq_T}
    \partial_t T + u \nabla T - \Delta T -\frac{H}{\pi}\frac{\theta_0 N^2}{g} \dive v =   \frac{H g}{\pi R}(\dive v)^- h_q G^+(T),
\end{equation}
\begin{equation}\label{eq_q}
    \partial_t q + u \nabla q + \bar{Q} \dive v = -  \frac{H g}{\pi R}(\dive v)^- h_q G^+(T),
\end{equation}
where $h_q \in L^{\infty}(\mathbb{R}^2 \times (0, \infty))$ is an element of $\mathcal{H}(q-q_s)$ defined as:
\begin{equation}\label{def_hq}
   h_q(x,y,t) := \left \{ \begin{aligned}
        & 1 \qquad q-q_s > 0 \\
        & \alpha \in [0,1] \qquad q-q_s=0 \\
        & 0 \qquad q-q_s < 0
    \end{aligned} \right.
\end{equation}
\end{definition}
\begin{remark}
If we adopt the differential inclusion formulation  as in \cite{CZ} (see system \eqref{tro_model_Te}), then \eqref{eq_q} and \eqref{def_hq} can be reformulated by saying that $h_q \in L^{\infty}_{t,x}$ and satisfies the following variational inequality,
\begin{equation}\label{def_var_ineq}
     \int_{\mathbb{R}^2}[\tilde{q}-q_s]^+dx - \int_{\mathbb{R}^2} [q-q_s]^+dx \geq  \int_{\mathbb{R}^2} h_q (\tilde{q}-q) dx \qquad \text{a.e in } [0,t_1] \times \mathbb{R}^2. 
\end{equation}
This means that $h_q$ is an element of the subdifferential of the function $q \rightarrow ([q-q_s]^+,1)$ where $(\cdot, 1)$ is the $L^2$ scalar product. Since it holds that
\begin{equation*}
    \partial([q-q_s]^+,1) = \mathcal{H}(q-q_s)
\end{equation*}
 it is easy to verify that if $h_q \in \mathcal{H}(q-q_s)$  then it satisfies \eqref{def_hq}.

In this paper, we will not use this formulation since, as we work on an unbounded domain, we will not obtain the compactness needed to prove \eqref{def_var_ineq}, as will be shown later.
\end{remark}

Now we are ready to state our main results.
\begin{theorem}(Global existence of strong solutions)\label{thm_final}
Suppose that $(u_0,v_0,T_{0},q_0) \in H^1(\mathbb{R}^2)$ with $\dive u_0=0$. Then there exist global strong solutions $(u,v,T,q)$ to system \eqref{tro_model_fin} in the sense of Definition \ref{def_strong} such that
\begin{equation}\label{ene_fin}
     \begin{aligned}
        \sup_{0 \leq t \leq \mathcal{T}}\|(u,v,T, q)\|_{H^1}^2 & + \int_0^\mathcal{T} (\|(\Delta u, \Delta v, \Delta T,\partial_t u, \partial_t v, \partial_t T, \partial_t q)\|_2^2 + \|(\nabla u, \nabla v)\|_{\infty})dt \leq C\\
    \end{aligned}
\end{equation}
where $h_q \in L^{\infty}(\mathbb{R}^2 \times (0, \infty))$ is an element of $\mathcal{H}(q-q_s)$ defined as \eqref{def_hq} and \\
$C:= C(\mathcal{T}, \|(u_0,v_0,T_{0},q_0)\|_{H^1}) $ is a nondecreasing function on $\mathcal{T} \in [0,\infty)$. 
\end{theorem}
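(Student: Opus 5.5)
The plan follows the two-parameter regularization already announced in the introduction. For $\epsilon,\eta>0$ we replace $\mathcal{H}(q-q_s)$ by a smooth monotone mollification $\mathcal{H}_\epsilon(q-q_s)$ with values in $[0,1]$, and we add the artificial viscosity $\eta\Delta q$ to the moisture equation. The regularized system is then a semilinear parabolic system for $(u,v,T,q)$ with Lipschitz, bounded-coefficient source terms; recall that $G^+$ is bounded and Lipschitz by \eqref{prop_G}, and $(\dive v)^-$ is Lipschitz in $\dive v$. Local existence of $H^1$ strong solutions follows by a standard Galerkin or contraction argument. Globality then follows from the $H^1$ a priori bounds below, which we prove independently of $\epsilon$ and $\eta$. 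Uniqueness at fixed $(\epsilon,\eta)$ follows from an $L^2$ energy estimate on the difference, since all nonlinear sources are Lipschitz in that setting.

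The core is the chain of uniform estimates.

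\emph{Step 1 (basic energy).} Test the $u$-equation by $u$, the $v$-equation by $v$, and the $T$-equation by $\frac{g^2}{\theta_0^2 N^2}T$. The coupling terms $\frac{H}{\pi}\frac{g}{\theta_0}\nabla T\cdot v$ and $\frac{H}{\pi}\frac{\theta_0N^2}{g}\dive v\, T$ cancel after rescaling. The term $\int \dive(v\otimes v)\cdot u$ cancels with $\int (v\cdot\nabla u)\cdot v$, as in \cite{LT}. The precipitation term is bounded by $C\|\dive v\|_2\|T\|_2$ because $0\le h G^+\le C_G$, and it is absorbed using the dissipation. This yields $L^\infty L^2\cap L^2H^1$ bounds for $(u,v,T)$. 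For $q$, testing by $q$ gives $\frac{d}{dt}\|q\|_2^2\le C\|\nabla v\|_2\|q\|_2$, hence $q\in L^\infty L^2$ uniformly in $\eta$.

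\emph{Step 2 ($H^1$ for $u,v,T$).} Follow \cite{LT2}: test by $-\Delta u,-\Delta v,-\Delta T$ and use Ladyzhenskaya/Gagliardo–Nirenberg on $\mathbb{R}^2$. The hardest nonlinearity is $v\cdot\nabla u$ together with $\dive(v\otimes v)$. Following \cite{LT2}, we first derive an $L^4$ (or $L^\infty L^p$) bound for $v$ by testing the $v$-equation with $|v|^2v$, and then close a Gronwall inequality for $\|(\nabla u,\nabla v,\nabla T)\|_2^2+\int\|(\Delta u,\Delta v,\Delta T)\|_2^2$. The precipitation term only contributes $C\|\nabla v\|_2\|\Delta T\|_2$. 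The time derivatives are then controlled in $L^2L^2$ directly from the equations.

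\emph{Step 3 ($\nabla q$ and $\|\nabla u,\nabla v\|_\infty$).} This is the main obstacle. Since $q$ is purely transported (uniformly in $\eta$), bounding $\nabla q$ in $L^\infty L^2$ requires $\int_0^{\mathcal T}\|\nabla u\|_\infty dt<\infty$. It also requires control of $\nabla$ of the source, i.e. $\|\nabla^2 v\|_2$, $\|\nabla T\|$, and the term $\nabla \mathcal{H}_\epsilon(q-q_s)$.

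The last term is dangerous. To handle it, differentiate the $q$-equation and test by $\nabla q$. The term $-\frac{Hg}{\pi R}(\dive v)^-G^+(T)\mathcal{H}_\epsilon'(q-q_s)|\nabla q|^2$ has a favorable sign, since $\mathcal{H}_\epsilon'\ge0$. This is exactly why the positive part $G^+$ was introduced, and it makes the estimate independent of $\epsilon$. The remaining terms are bounded by $C(\|\nabla u\|_\infty+1)\|\nabla q\|_2^2+C(\|\nabla^2 v\|_2+\|\nabla T\|_4\|\nabla v\|_4)\|\nabla q\|_2$.

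For $\int\|\nabla u\|_\infty$ and $\int\|\nabla v\|_\infty$, use $W^{2,p}\hookrightarrow W^{1,\infty}$ for some $p>2$, interpolated with $H^1$. Then apply Lemma \ref{A3} to the $u$-, $v$- and $T$-equations viewed as heat equations with forcing in $L^2L^p$. The forcings $u\cdot\nabla v$, $\nabla T$, $(\dive v)^-hG^+$, etc. lie in $L^2L^p$ by Step 2 and Gagliardo–Nirenberg. This avoids any dependence on $\nabla q$, so the system closes: Gronwall gives $\nabla q\in L^\infty L^2$ and proves \eqref{ene_fin} uniformly in $\epsilon,\eta$.

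\emph{Step 4 (limits).} Pass first $\eta\to0$, then $\epsilon\to0$. By Aubin–Lions on balls $B_R$ and a diagonal argument, $(u,v,T)$ converge strongly in $L^2(0,\mathcal T;H^1_{loc})$, and $q$ converges strongly in $C([0,\mathcal T];L^2_{loc})$. The weak-$*$ limit of $\mathcal{H}_\epsilon(q_\epsilon-q_s)$ in $L^\infty$ defines $h_q\in[0,1]$.

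To identify $h_q$ as in \eqref{def_hq}, use the a.e. convergence of $q_\epsilon$. On $\{|q-q_s|>\delta\}$, Egorov's theorem gives $\mathcal{H}_\epsilon(q_\epsilon-q_s)\to 1$ or $0$ uniformly off a set of small measure. Lemma \ref{egoroff} then yields $h_q=1$ a.e. on $\{q>q_s\}$ and $h_q=0$ a.e. on $\{q<q_s\}$. Products such as $(\dive v_\epsilon)^-G^+(T_\epsilon)\mathcal{H}_\epsilon$ converge weakly, since the first two factors converge strongly. Lower semicontinuity transfers \eqref{ene_fin}, and the continuity in time follows from the regularity of $\partial_t$.
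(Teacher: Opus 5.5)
Your proposal is correct and follows essentially the same route as the paper. The paper also uses the $(\epsilon,\eta)$-regularization and proves $\epsilon,\eta$-uniform energy and $H^1$ bounds for $(u,v,T)$ that never involve $\nabla q$. It then applies Lemma \ref{A3} to get $\int_0^{\mathcal T}\|(\nabla u,\nabla v)\|_\infty\,dt\le C$, uses the favorable sign of the $(\dive v)^-G^+(T)\mathcal{H}_\epsilon'(q-q_s)|\nabla q|^2$ term to bound $\nabla q$, and passes $\eta\to0$ then $\epsilon\to0$ via Aubin--Lions, Egorov and Lemma \ref{egoroff} to identify $h_q$.

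The differences are cosmetic. You cancel the $T$--$v$ coupling exactly by rescaling, and you take $H^1$ rather than $H^2$ data at the regularized level. You also apply Lemma \ref{A3} to $u$ instead of the paper's Stokes splitting $u=\bar u+\hat u$; there you should say explicitly that $\nabla p$ is removed by the Leray projector or controlled in $L^2L^p$ by Calder\'on--Zygmund. Finally, passing to the limit in the whole product and only then identifying $h_q$ off $\{q=q_s\}$ is a slightly cleaner version of the paper's three-case argument.
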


\begin{theorem}(Uniqueness of global strong solutions)\label{teo:uniqueness}
Suppose that $(u_1, v_1, T_1, q_1)$ and $(u_2, v_2, T_2, q_2)$ are two global strong solutions to the system \eqref{tro_model_fin} as provided by Theorem \ref{thm_final}, with the same initial data and satisfying $(q_1 - q_s)(q_2 - q_s) \ge 0$ a.e in $(0,\mathcal{T}) \times \mathbb{R}^2$. Then the two solutions coincide; namely,
\begin{equation*}
(u_1, v_1, T_1, q_1) \equiv (u_2, v_2, T_2, q_2)
\quad \text{on } \mathbb{R}^2 \times [0,\mathcal{T}),
\quad \text{for every } \mathcal{T} \in [0,\infty).    
\end{equation*}
\end{theorem}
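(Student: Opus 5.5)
We argue by an energy estimate on the difference of the two solutions, closed through the logarithmic Gronwall-type Lemma \ref{lemma_uniqueness}. Set $\delta u=u_1-u_2$, $\delta v=v_1-v_2$, $\delta T=T_1-T_2$, $\delta q=q_1-q_2$, $\delta p = p_1-p_2$. Subtracting the two copies of \eqref{tro_model_fin} gives a linear system for the differences, with forcing terms of the form $\delta u\cdot\nabla u_2$, $\dive(\delta v\otimes v_1+v_2\otimes \delta v)$, $\delta u\cdot\nabla v_2$, $\delta v\cdot\nabla u_1$, $v_2\cdot\nabla\delta u$, $\delta u\cdot\nabla T_2$, $\delta u\cdot\nabla q_2$. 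The precipitation part is
\[
\mathcal{P}=\tfrac{Hg}{\pi R}\big[(\dive v_1)^-h_{q_1}G^+(T_1)-(\dive v_2)^-h_{q_2}G^+(T_2)\big].
\]
The hypothesis $(q_1-q_s)(q_2-q_s)\ge 0$ is used exactly here. A.e. the two humidities lie on the same side of $q_s$, so $h_{q_1}=h_{q_2}$ on $\{q_1\neq q_s\}\cap\{q_2\neq q_s\}$. On the set where one of them equals $q_s$, we use that $\nabla q_i=0$ and $\partial_t q_i=0$ a.e. on level sets ($q_i\in H^1$, $\partial_tq_i\in L^2$). Then the $q_i$-equation itself forces $\bar Q\dive v_i=-\frac{Hg}{\pi R}(\dive v_i)^-h_{q_i}G^+(T_i)$ there, which pins down the product $(\dive v_i)^-h_{q_i}G^+(T_i)$ in terms of $\dive v_i$. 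Since $\bar Q>0$ and $G^+\ge0$, this forces it to vanish unless $\dive v_i\le 0$. The goal is to write $\mathcal P = h\,[(\dive v_1)^--(\dive v_2)^-]G^+(T_1) + h(\dive v_2)^-[G^+(T_1)-G^+(T_2)]$ plus a term that can be absorbed. Here $h\in[0,1]$ is a common bounded multiplier. This yields the pointwise bound
\[
|\mathcal P|\le C(|\nabla\delta v|+|\nabla v_2||\delta T|).
\]
This uses the bound $G^+\le C_G$ and the Lipschitz property \eqref{prop_G}.

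Next we test the $\delta u,\delta v,\delta T$ equations with $\delta u,\delta v,\delta T$ in $L^2$, and the $\delta q$ equation with $\delta q$. Since the $q$ equation has no diffusion, only an $L^2$ estimate for $\delta q$ is used. Its forcing terms are $\bar Q\dive\delta v$, $\delta u\cdot\nabla q_2$ and $\mathcal P$. The pieces $\bar Q\dive \delta v$ and $\mathcal P$ are controlled by $\epsilon\|\nabla\delta v\|_2^2+C\|\delta q\|_2^2+C\|\nabla v_2\|_\infty^2\|\delta T\|_2^2$. Note that $\int_0^{\mathcal T}\|\nabla v_2\|_\infty dt<\infty$ comes from \eqref{ene_fin}, but we need the square. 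We therefore estimate instead $\int|\nabla v_2||\delta T||\delta q|\le \|\nabla v_2\|_\infty(\|\delta T\|_2^2+\|\delta q\|_2^2)$, which is $L^1$ in time. The temperature's cross term $\dive\delta v\,\delta T$ and the $\mathcal P\,\delta T$ term are handled the same way, absorbing into $\|\nabla\delta T\|_2^2$, $\|\nabla \delta v\|_2^2$. The velocity terms are standard 2D Navier–Stokes-type estimates: Ladyzhenskaya plus the $L^\infty_tH^1$ bounds from \eqref{ene_fin}.

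The main obstacle is the transport term $\int\delta u\cdot\nabla q_2\,\delta q$, because $\nabla q_2$ is only in $L^\infty_tL^2$. Here we follow the strategy of \cite{LT}. We bound
\[
\Big|\int\delta u\cdot\nabla q_2\,\delta q\Big|\le\|\nabla q_2\|_2\|\delta q\|_2\|\delta u\|_\infty,
\]
and use the Brezis–Gallouet–Wainger-type inequality
\[
\|\delta u\|_\infty\le C\|\delta u\|_{H^1}\log^{1/2}\!\big(e+\|\delta u\|_{H^2}/\|\delta u\|_{H^1}\big).
\]
Define $f=\|(\delta u,\delta v,\delta T,\delta q)\|_2^2$ and $G=\|(\nabla\delta u,\nabla\delta v,\nabla\delta T)\|_2^2$ plus lower order terms. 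Take $S$ to be a multiple of $\|(\Delta u_i,\Delta v_i, \Delta T_i)\|_2^2+1$ summed over $i$, which is $L^1$ in time and positive. With these choices the transport term becomes $m_2[fG\log^+(S/G)]^{1/2}$ with $m_2\sim\|\nabla q_2\|_2\in L^\infty\subset L^2$, after adjusting constants. All remaining terms give $m_1 f$ with $m_1\in L^1$, built from $\|\nabla u_i\|_\infty$, $\|\nabla v_i\|_\infty$, $\|\nabla v_i\|_2^4$ and similar quantities, integrable by \eqref{ene_fin} together with Lemma \ref{A3} for $L^p$ control of second derivatives. Since $f(0)=0$, Lemma \ref{lemma_uniqueness} gives $f\equiv0$ on $[0,\mathcal T)$. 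This proves the claim for every $\mathcal T$.
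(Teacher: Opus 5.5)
Your skeleton matches the paper's: an $L^2$ estimate for the differences, Brezis--Gallouet--Wainger for $\delta u\cdot\nabla q_2\,\delta q$, and Lemma \ref{lemma_uniqueness}. The gap is the step everything rests on: the pointwise bound $|\mathcal P|\le C(|\nabla\delta v|+|\nabla v_2||\delta T|)$ is not proved and does not follow from what you establish.

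Taking $h=h_{q_1}$, the term you leave to be ``absorbed'' is
\[
\tfrac{Hg}{\pi R}\,(h_{q_1}-h_{q_2})(\dive v_2)^-G^+(T_2).
\]
Under your hypothesis it lives on $\{q_1=q_s\}\cup\{q_2=q_s\}$. There your level-set identity yields only $\dive v_i\le 0$ and $\dive v_i\big(\bar Q-\tfrac{Hg}{\pi R}h_{q_i}G^+(T_i)\big)=0$. For instance, on $\{q_1<q_s\}\cap\{q_2=q_s\}$, which $(q_1-q_s)(q_2-q_s)\ge0$ allows to have positive measure, it gives $\mathcal P=-\bar Q(\dive v_2)^-$. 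This need not vanish where $\nabla\delta v=0$ and $\delta T=0$.

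In the $\delta q$ equation the leftover is harmless: $(h_{q_1}-h_{q_2})\delta q\ge0$ by monotonicity of $\mathcal H$, so it has a good sign. In the $\delta T$ equation, however, it is tested against $\delta T$, which has no sign. It is neither small in $(\nabla\delta v,\delta T)$ nor proportional to $\delta q$, so it cannot go into $m_1 f$ or be absorbed by $\|\nabla\delta T\|_2^2$. Your Gronwall inequality therefore does not close.

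The missing ingredient is what the paper uses: $(\dive v_i)^-=0$ a.e.\ on $\{q_i=q_s\}$, so that $H_i=(\dive v_i)^-G^+(T_i)$ vanishes there. With the splitting into $\Omega_1,\dots,\Omega_7$, the precipitation difference is then $0$ on $\Omega_1,\dots,\Omega_4$. On $\Omega_5\cup\Omega_6\cup\Omega_7$ it equals $\tfrac{Hg}{\pi R}(H_1-H_2)$. Only then does your pointwise bound hold, in both the $\delta T$ and $\delta q$ equations.

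Be aware that the paper derives $\dive v_i=0$ from the same identity you wrote. As your more careful reading shows, that step also requires excluding the balance case $h_{q_i}G^+(T_i)=\pi R\bar Q/(Hg)$ with $\dive v_i<0$ on a set of positive measure. This is automatic, e.g., if $\bar Q>\tfrac{Hg}{\pi R}C_G$. To complete your argument you must either supply such a justification or assumption, or find another way to control $\int(h_{q_1}-h_{q_2})(\dive v_2)^-G^+(T_2)\,\delta T\,dx$.

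The rest of your estimates is fine: the velocity terms, using $\|\nabla v_2\|_\infty\in L^1_t$ instead of its square, and the BGW treatment with $m_2\sim\|\nabla q_2\|_2$.
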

Concerning the uniqueness result, we note that the result is conditional to the assumption $(q_1 - q_s)(q_2 - q_s) \ge 0$. Physically, this assumption means that both solutions remain in the same thermodynamic regime: either below saturation ($q\leq q_s$) or above saturation ($q\geq q_s$). We stress that this requirement is physically sound. Indeed, since $q_s$ represents the saturation humidity, namely the maximal amount of water vapor sustainable before condensation processes become active, then the regions $q\leq q_s$ and $q\geq q_s$ correspond to distinct physical states of the atmosphere.
From the analytical viewpoint, the threshold $q=q_s$ is precisely where the nonlinear source term becomes discontinuous. Preventing changes of sign of $q-q_s$ allows one to control the multivalued contribution and derive a stability estimate leading to uniqueness. On the other hand, although it is natural to assume that the initial datum satisfies either $q_0\leq q_s$ or $q_0\geq q_s$, we are currently not able to prove that such a condition is propagated by the dynamics for positive times. Establishing the invariance of these regimes under the evolution remains an open issue, and for this reason the uniqueness statement is conditional.

\section{Approximate Problem}\label{sec:4}

In this section, we define a family of regularized problems starting from system~\eqref{tro_model_fin}. In particular, we introduce two different approximations. First, we regularize the Heaviside function by means of mollification, denoted by \(\mathcal{H}_\epsilon(q - q_s)\), to obtain a single-valued function. Next, we introduce an artificial viscosity term \(\eta \Delta q\) in the humidity equation. This strategy is designed to ensure the necessary regularity to prove local and, subsequently, global well-posedness for each fixed pair of parameters \(\epsilon, \eta > 0\).

\subsection*{I approximation: Problem $P_{\epsilon}$}
Let us introduce the real function $\mathcal{H}_\epsilon (\cdot)$ that approximate  $\mathcal{H}(\cdot)$,
\begin{equation}
\mathcal{H}_\epsilon (r) := \left \{ \begin{aligned}
             & 0, \quad r \leq 0 \\
            & r/\epsilon, \quad r \in (0, \epsilon] \\
            & 1, \quad r > \epsilon
        \end{aligned} \right.
\end{equation}
the function $\mathcal{H}_\epsilon(r)$ enjoys the following properties,
\begin{equation}\label{prop_H}
\begin{aligned}
        & |H_\epsilon(r_1)| \leq 1, \quad |H_\epsilon(r_1)-H_\epsilon(r_2)| \leq \frac{1}{\epsilon} |r_1-r_2| \qquad \text{for any}\  r_1,r_2 \in \mathbb{R}.
\end{aligned}
\end{equation}
Now, the approximating system $P_{\epsilon}$ is given by 
\begin{equation}\label{approx_epsilon}\tag{$P_{\epsilon}$}
\left\{\begin{aligned}
&\partial_t u + u \cdot \nabla u - \Delta u + \nabla p + \dive (v \otimes v) = 0\\
& \partial_ t v + u \cdot \nabla v + v \nabla u - \Delta v = \frac{H}{\pi} \frac{g}{\theta_0} \nabla T \\
& \dive u = 0 \\
& \partial_t T + u \nabla T - \Delta T - \frac{H}{\pi}\frac{\theta_0 N^2}{g}  \dive v = \frac{H g}{\pi R}(\dive v)^- \mathcal{H}_\epsilon(q-q_s) G^+(T) \\
& \partial_t q + u \nabla q + \bar{Q} \dive v = -  \frac{H g}{\pi R}(\dive v)^- \mathcal{H}_\epsilon(q-q_s) G^+(T), 
    \end{aligned}\right.
\end{equation}
such that for every fixed $\epsilon>0$ is a single-valued problem. In Section \ref{sec:5} we will prove the global well-posedness of system \eqref{approx_epsilon}. 

\subsection*{II approximation: Problem $P_{\epsilon,\eta}$}

In order to prove the existence of a global strong solution to \eqref{approx_epsilon}, one has to overcome the difficulty caused by the absence of diffusivity in the moisture equation. As already mentioned, we regularize \eqref{approx_epsilon} by adding an artificial viscosity term $\eta \Delta q$, with a positive parameter $\eta > 0$.

\begin{equation}\label{approx}\tag{$P_{\epsilon,\eta}$}
\left\{\begin{aligned}
&\partial_t u + u \cdot \nabla u - \Delta u + \nabla p + \dive (v \otimes v) = 0\\
& \partial_ t v + u \cdot \nabla v + v \nabla u - \Delta v = \frac{H}{\pi} \frac{g}{\theta_0} \nabla T \\
& \dive u = 0 \\
& \partial_t T + u \nabla T - \Delta T - \frac{H}{\pi}\frac{\theta_0 N^2}{g}  \dive v = \frac{H g}{\pi R}(\dive v)^- \mathcal{H}_\epsilon(q-q_s) G^+(T). \\
& \partial_t q + u \nabla q + \bar{Q} \dive v - \eta \Delta q = -  \frac{H g}{\pi R}(\dive v)^- \mathcal{H}_\epsilon(q-q_s) G^+(T). 
    \end{aligned}\right.
\end{equation}

For the system \eqref{approx} it is possible to prove the following global well-posedness result.
\begin{proposition}\label{global_existence_fixed_eta}
Suppose that $(u_0,v_0,T_{0},q_0) \in H^2(\mathbb{R}^2)$ such that $\dive u_0=0$. Then, there is a unique global strong solution $(u,v,T,q)$ to \eqref{approx} with initial data $(u_0,v_0,T_{0},q_0)$, such that
    \begin{equation*}
        \begin{aligned}
            & (u,v, T, q) \in C([0, \infty); H^2(\mathbb{R}^2))\cap L^2_{loc}([0, \infty); H^3(\mathbb{R}^2)), \\
            &(\partial_t u,  \partial_t v, \partial_t T, \partial_t q) \in L^2_{loc}([0, \infty); H^1(\mathbb{R}^2)).
        \end{aligned}
    \end{equation*}
In particular, we have for all $\mathcal{T} \in [0,\infty)$,
    \begin{equation}\label{uni_eta_energy}
    \begin{aligned}
        \sup_{0 \leq t \leq \mathcal{T}}\|(u,v,T, q)\|_{H^1}^2 & + \int_0^\mathcal{T} \|(\Delta u, \Delta v, \Delta T, \sqrt{\eta}\Delta q,  \partial_t u, \partial_t v, \partial_t T, \partial_t q)\|_2^2 dt \\
        & + \int_0^\mathcal{T}(\|\nabla u\|_{\infty} + \|\nabla v\|_\infty)dt \leq C
    \end{aligned}
    \end{equation}
    where $C:=C(\mathcal{T}, \|(u_0,v_0,T_{0},q_0)\|_{H^2}) $ is a nondecreasing function on $\mathcal{T} \in [0,\infty)$ and it is indipendent on $\eta$.
    
\end{proposition}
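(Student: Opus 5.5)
We first obtain local well-posedness in $H^2$ and then extend it to all times using a priori estimates that are uniform in $\eta$.

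\emph{Local solutions.} For fixed $\epsilon,\eta>0$, system \eqref{approx} is a semilinear parabolic system: the pressure is eliminated by the Leray projector, and every equation, including the one for $q$, now carries a Laplacian. The right-hand side $\frac{Hg}{\pi R}(\dive v)^-\mathcal{H}_\epsilon(q-q_s)G^+(T)$ is globally Lipschitz in $(\dive v,q,T)$ by \eqref{prop_H} and \eqref{prop_G}, with Lipschitz constant of order $1/\epsilon$. We work in
$$X_{\mathcal{T}_0}=C([0,\mathcal{T}_0];H^2)\cap L^2(0,\mathcal{T}_0;H^3).$$
A contraction argument for the Duhamel formulation, or equivalently a Galerkin scheme with $H^2$ energy estimates, gives a unique local solution with $\partial_t(u,v,T,q)\in L^2H^1$. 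Here $H^2(\mathbb{R}^2)$ is an algebra, and $(\dive v)^-$ is Lipschitz in $v$ at the $H^1$ level. Uniqueness in this class follows from an $L^2$ energy estimate on the difference, using the Lipschitz bounds and Gronwall's inequality.

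\emph{Uniform $H^1$ estimates.} These are the heart of the proof. We proceed in the following order.

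1. \emph{Basic energy.} Test the $u$- and $v$-equations with $u,v$. The term $\int \dive(v\otimes v)\cdot u$ cancels against $\int (v\cdot\nabla u)\cdot v$ after integration by parts, as in \cite{LT}. The coupling terms $\frac{H g}{\pi\theta_0}\int \nabla T\cdot v$ and $-\frac{H\theta_0N^2}{\pi g}\int \dive v\,T$ cancel if we test the $T$-equation with $\frac{g^2}{\theta_0^2 N^2}T$. The source term in the $T$-equation is bounded by $C\|\dive v\|_2\|T\|_2$, since $0\le \mathcal{H}_\epsilon G^+\le C_G$, and is absorbed by the dissipation. For $q$, testing with $q$ gives
$$|\bar Q\textstyle\int \dive v\, q|+\big|\int(\dive v)^-\mathcal H_\epsilon G^+ q\big|\le C\|\nabla v\|_2\|q\|_2,$$
and Gronwall's inequality yields an $L^\infty L^2$ bound together with $\nabla(u,v,T)\in L^2L^2$, all independent of $\eta$ and $\epsilon$.

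2. \emph{$H^1$ estimates for $u$, $v$, $T$.} Test with $-\Delta u$, $-\Delta v$, $-\Delta T$. We use the 2D Ladyzhenskaya inequality and the structure that the nonlinear terms are at most quadratic. The forcing in the $T$-equation lies in $L^2$ with norm at most $C\|\nabla v\|_2$. This step follows \cite{LT2} and closes by Gronwall's inequality.

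3. \emph{$\nabla q$ without relying on $\eta$.} This is the main obstacle. Differentiating the $q$-equation produces the terms
$$\nabla u\,\nabla q,\qquad \nabla\dive v,\qquad \nabla\big[(\dive v)^-\mathcal H_\epsilon(q-q_s)G^+(T)\big].$$
The last of these contains $\mathcal H_\epsilon'\,\nabla q\,(\dive v)^-G^+\sim \epsilon^{-1}$, which is fatal for later uniformity in $\epsilon$. We plan to handle it by a sign argument. Since $\mathcal H_\epsilon'\ge0$ and $G^+\ge0$, testing $-\nabla(\ldots)$ against $\nabla q$ gives the term
$$-\int(\dive v)^-G^+(T)\,\mathcal H_\epsilon'(q-q_s)|\nabla q|^2\le 0,$$
which has a favorable sign and can be dropped. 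The remaining pieces are bounded by $C\|\nabla^2 v\|_2\|\nabla q\|_2$ and by $\|\nabla T\|_4\|\nabla v\|_4\|\nabla q\|_2$, both integrable in time by step 2. The transport term gives
$$\|\nabla u\|_\infty\|\nabla q\|_2^2,$$
so we need $\int_0^{\mathcal T}\|\nabla u\|_\infty dt$. In 2D we bound $\|\nabla u\|_\infty$ through the logarithmic Brezis--Gallouet type inequality, or alternatively via Lemma \ref{A3}. In the latter approach we rewrite the $u$- and $v$-equations as heat equations with forcing in $L^2L^p$ for some $p>2$, obtaining $\Delta u,\Delta v\in L^1L^p$. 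Gagliardo--Nirenberg then gives $\int\|\nabla(u,v)\|_\infty<\infty$, and Gronwall's inequality bounds $\|\nabla q\|_2$ with the $\eta\|\Delta q\|_2^2$ term on the good side.

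\emph{Time derivatives and global existence.} The time derivatives $\partial_t(u,v,T,q)$ are then read off from the equations in $L^2L^2$. For $q$ this uses $u\cdot\nabla q\in L^2$, which holds since $u\in L^\infty$. This establishes \eqref{uni_eta_energy} for every $\mathcal T$. Finally, since the solution is $H^2$ on any finite interval, the blow-up criterion for the local solution, combined with $H^2$ estimates that are linear given \eqref{uni_eta_energy} (constants may depend on $\eta,\epsilon$), excludes finite-time blow-up in $H^2$. This yields the global solution in the stated class.
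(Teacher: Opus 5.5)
Your proposal is correct and follows essentially the same route as the paper. It uses a fixed-point local theory in $C([0,\mathcal{T}_0];H^2)\cap L^2(0,\mathcal{T}_0;H^3)$, the same $\eta$-uniform chain of estimates (basic energy, $H^1$ bounds for $(u,v,T)$, $\nabla u,\nabla v\in L^1(0,\mathcal{T};L^\infty)$ via Lemma \ref{A3}, the $\nabla q$ estimate with the $\mathcal{H}_\epsilon'$ term discarded by sign, time derivatives read off from the equations), and continuation through $\eta$-dependent $H^2$ bounds. The differences are cosmetic: you apply Lemma \ref{A3} to $u$ directly after Leray projection instead of splitting $u=\bar u+\hat u$, and you weight the $T$-equation to cancel the coupling terms exactly. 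One small correction: the source term is Lipschitz in $(q,T)$ only with a constant proportional to $|\dive v|$, not globally, but this is harmless because $\dive v\in L^2(0,\mathcal{T}_0;L^\infty)$ in your local class.
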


We would like to emphasize that the proof of Proposition \ref{global_existence_fixed_eta} is largely in line with those presented in \cite{LT, LT2}. The novelty of the present work lies in the inclusion of the diffusivity term in the temperature equation and in the specific form of the precipitation term. For this reason, we will focus only on analyzing the main differences related to these aspects.

\subsection*{Proof of Proposition \ref{global_existence_fixed_eta}}
The proof of Proposition \ref{global_existence_fixed_eta} consists of several steps. The local existence and uniqueness of strong solutions to system \eqref{approx} can be proved in a standard way, and here we only sketch the main arguments. For the sake of clarity, the $\eta$–uniform energy estimates are presented in Section \ref{sec:5}, since they are the key ingredients used to prove Theorem \ref{global_existence_thm_1}.

\subsubsection*{Contraction mapping principle}
We aim to apply the contraction mapping principle to the following linear system defined on $\mathbb{R}^2 \times (0,\mathcal{T})$ with initial data $(u_0,v_0,T_0,q_0)$:
\begin{equation*}
    \begin{aligned}
        & \partial_t U - \Delta U + \nabla P = - u \cdot \nabla u - \dive (v \otimes v) \\
        & \dive U=0\\
        &\partial_t V - \Delta V = - u \cdot \nabla v + \frac{H}{\pi} \frac{g}{\theta_0} \nabla T - v \cdot \nabla u \\
        &\partial_t \tilde{T} - \Delta \tilde{T} = - u \nabla T + \frac{H N^2 \theta_0}{\pi g}  \dive v + \frac{H g}{\pi R}(\dive v)^- \mathcal{H}_\epsilon(q-q_s) G^+(T) \\
        & \partial_t Q- \eta \Delta Q = - u \nabla q - \bar{Q} \dive v - \frac{H g}{\pi R}(\dive v)^- \mathcal{H}_\epsilon(q-q_s) G^+(T).
    \end{aligned}
\end{equation*}
Let us consider a positive time interval $\mathcal{T}$. For every $(u,v,T,q) \in L^2((0,\mathcal{T});H^3(\mathbb{R}^2)) \cap C([0,\mathcal{T}];H^2(\mathbb{R}^2))$, one can uniquely find a solution $(U,V,\tilde{T},Q) \in L^2((0,\mathcal{T});H^3(\mathbb{R}^2)) \cap C([0,\mathcal{T}];H^2(\mathbb{R}^2))$, with time derivatives $(\partial_t U, \partial_t V, \partial_t \tilde{T}, \partial_t Q) \in L^2((0,\mathcal{T});H^1(\mathbb{R}^2))$, to the linear system defined above, satisfying the initial condition $(u_0,v_0,T_0,q_0)$. By performing standard energy estimates, one can show that the solution map $\mathrm{R}$ defined as $$\mathrm{R}: (u,v,T,q) \rightarrow (U,V,\tilde{T},Q)$$ is a contraction in the space $L^2((0,\mathcal{T});H^3(\mathbb{R}^2)) \cap C([0,\mathcal{T}];H^2(\mathbb{R}^2))$, for sufficiently small positive time $\mathcal{T}$ depending only on the initial data $(u_0,v_0,T_0,q_0)$. By the Banach fixed-point theorem, this yields the existence of a unique fixed point $(u,v,T,q)$, corresponding to the unique local strong solution of the system with the prescribed initial data.

\subsubsection*{Global existence} 
In order to prove global existence, one needs to show that the maximal existence time $\mathcal{T}^*>0$ of the unique strong solution to \eqref{approx} satisfies $\mathcal{T}^*= \infty$. We proceed by contradiction assuming that $\mathcal{T}^* < \infty$. Applying the operator $\nabla$ to equation $\eqref{approx}_5$ and taking the $L^2(\mathbb{R}^2)$ inner product with $- \nabla \Delta q$, we obtain
\begin{equation*}
\begin{aligned}
   & \frac{1}{2} \frac{d}{dt} \|\Delta q\|_2^2 + \|\sqrt{\eta} \nabla \Delta q\|_2^2   \leq C \int_{\mathbb{R}^2} [|u| |\nabla^2 q| + |\nabla^2 v| + |\nabla v \nabla T| + |\nabla v \nabla q|] |\nabla \Delta q|dx \\
   & \leq C ( \|u\|_4 \|\nabla^2 q\|_4 + \|\Delta v\|_2 + \|\nabla v\|_4 \|\nabla T\|_4 + \|\nabla v\|_4 \|\nabla q\|_4  ) \|\nabla \Delta q\|_2 \\
   & \leq C \Big( 
    \|u\|_2^{1/2} \|\nabla u\|_2^{1/2} \|\Delta q\|_2^{1/2} \|\nabla \Delta q\|_2^{1/2} 
    + \|\Delta v\|_2 \\
&\quad + \|\nabla v\|_2^{1/2} \|\Delta v\|_2^{1/2} 
      \|(\nabla q, \nabla T)\|_2^{1/2} \|(\Delta q, \Delta T)\|_2^{1/2} 
\Big) \|\nabla \Delta q\|_2 \\
& \leq \frac{1}{4} \|\sqrt{\eta}\, \nabla \Delta q\|_2^2 
    + C_{\eta} \Big( 
        \|u\|_2 \|\nabla u\|_2 \|\Delta q\|_2 \|\nabla \Delta q\|_2 
        + \|\Delta v\|_2^2 \\
& \quad + \|\nabla v\|_2 \|\Delta v\|_2 
        \|(\nabla q, \nabla T)\|_2 \|(\Delta q, \Delta T)\|_2
      \Big) \\
   & \leq \frac{1}{2}  \|\sqrt{\eta}\nabla \Delta q\|^2_2 + C_{\eta} ( 1 + \|u\|_2^2 \|\nabla u\|_2^2 + \|(\nabla v, \nabla q, \nabla T)\|_2^2)\|)\|(\Delta v, \Delta q, \Delta T)\|_2^2.
\end{aligned}
\end{equation*}
By repeating the same argument for $(u,v,T)$ as in \cite{LT2}, one can obtain the following estimate
\begin{equation*} 
\begin{aligned} 
& \sup_{t \in [0, \mathcal{T}^*]} \|(u,v,T,q)\|_{H^2} + \int_0^{\mathcal{T}^*} \|(\nabla u, \nabla v,\nabla T, \nabla q)\|_{H^2} dt \leq C, 
\end{aligned} 
\end{equation*}
where $C := C(\eta, \|(u_0,v_0,T_0,q_0)\|_{H^2})$. Therefore, one can extend the solution beyond the time $\mathcal{T}^*$, which contradicts the hypothesis. This contradiction implies that $\mathcal{T}^*= \infty$, and consequently we obtain a global strong solution.

\section{Global existence of strong solutions to \eqref{approx_epsilon} for every fixed $\epsilon>0$}\label{sec:5}

This section is devoted to prove the following existence result of global strong solutions for the system \eqref{approx_epsilon}. 

\begin{theorem}\label{global_existence_thm_1}
   Suppose that $(u_0,v_0,T_{0},q_0) \in H^1(\mathbb{R}^2)$ 
   with $\dive u_0=0$. Then there exist a unique global strong solution $(u,v,T,q)$ to system \eqref{approx_epsilon} for every fixed $\epsilon>0$ such that
   \begin{equation*}
       \begin{aligned}
           & (u,v,T) \in C([0,\mathcal{T}];H^1(\mathbb{R}^2)) \cap L^2((0,\mathcal{T});H^2(\mathbb{R}^2)) \\
           &q \in C([0,\mathcal{T}];L^2(\mathbb{R}^2)) \cap L^\infty((0,\mathcal{T});H^1(\mathbb{R}^2)) \\
           & (\partial_t u, \partial_t v, \partial_t T, \partial_t q) \in L^2((0,\mathcal{T}); L^2(\mathbb{R}^2))\\
       \end{aligned}
   \end{equation*}
   for any positive time $\mathcal{T}>0$. Moreover it holds for every $\epsilon >0$
\begin{equation}\label{ene_uni_epsilon}
     \begin{aligned}
        \sup_{0 \leq t \leq \mathcal{T}}\|(u,v,T, q)\|_{H^1}^2 & + \int_0^\mathcal{T} (\|(\Delta u, \Delta v, \Delta T, \partial_t u, \partial_t v, \partial_t T, \partial_t q)\|_2^2dt\\
        &+ \|\nabla u\|_{\infty} + \|\nabla v\|_{\infty})dt \leq C,
    \end{aligned}
\end{equation}
 where $C:=C(\mathcal{T}, \|(u_0,v_0,T_{0},q_0)\|_{H^1})$ is a nondecreasing function on $\mathcal{T} \in [0,\infty)$ and it is indipendent on $\epsilon$.
\end{theorem}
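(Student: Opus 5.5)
The plan is to obtain the solution of \eqref{approx_epsilon} as the limit $\eta\to 0$ of the solutions of \eqref{approx} given by Proposition \ref{global_existence_fixed_eta}. First I regularize the data: take $(u_0^n,v_0^n,T_0^n,q_0^n)\in H^2(\mathbb{R}^2)$ with $\dive u_0^n=0$, converging in $H^1$ to $(u_0,v_0,T_0,q_0)$, and with $H^1$ norms bounded uniformly. Then I pick $\eta=\eta_n\to0$. The main work is to show that \eqref{uni_eta_energy} holds with a constant depending only on $\mathcal{T}$, on the $H^1$ norm of the data and on $\epsilon$ (in fact independent of $\epsilon$), and \emph{not} on the $H^2$ norm or on $\eta$.

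For these estimates I will do the following, in order.
\begin{enumerate}
\item $L^2$ energy estimates. I test $u$, $v$, $T$, $q$ with suitable weights so that the linear couplings $\frac{H}{\pi}\frac{g}{\theta_0}\nabla T\cdot v$ and $\frac{H}{\pi}\frac{\theta_0N^2}{g}\dive v\, T$ cancel after integration by parts. The term $\dive(v\otimes v)\cdot u$ combines with $(u\cdot\nabla v+v\cdot\nabla u)\cdot v$ as in \cite{LT}. The source terms are bounded using $0\le \mathcal{H}_\epsilon\le1$ and $0\le G^+\le C_G$, which gives $|(\dive v)^-\mathcal{H}_\epsilon G^+|\le C|\nabla v|$. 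The term $\bar Q\dive v\,q$ is absorbed by the dissipation $\|\nabla v\|_2^2$ via Young and Gronwall.
\item $H^1$ estimates for $(u,v,T)$. I test with $-\Delta u$, $-\Delta v$, $-\Delta T$ and use Ladyzhenskaya's inequality, following \cite{LT2}. The right-hand side of the $T$ equation is bounded in $L^2$ by $C\|\nabla v\|_2$, so it is harmless.
\item $\int_0^{\mathcal{T}}\|\nabla v\|_\infty\,dt$. I write the $v$ equation as a heat equation with forcing $f=-u\cdot\nabla v-v\cdot\nabla u+c\nabla T$, which lies in $L^2_tL^p$ for some $p>2$ by the previous bounds. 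Lemma \ref{A3} then gives $\Delta v\in L^1_tL^p$, and Gagliardo--Nirenberg gives $\nabla v\in L^1_tL^\infty$. The same argument applies to $u$ after applying the Leray projector.
\item $H^1$ bound for $q$, uniform in $\eta$. I differentiate the $q$ equation and test with $\nabla q$. The $\eta$-term contributes a nonnegative dissipation, and $\int\nabla u\,\nabla q\,\nabla q\le\|\nabla u\|_\infty\|\nabla q\|_2^2$. The forcing $\bar Q\nabla\dive v$ is bounded by $\|\Delta v\|_2$.
\end{enumerate}

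Step 4 is the main obstacle, because of the gradient of the source term. The derivative hits three factors.
\begin{itemize}
\item Hitting $(\dive v)^-$ gives a term bounded by $|\nabla^2 v|$, which is fine.
\item Hitting $G^+(T)$ gives $\le M_G|\nabla v||\nabla T|$, controlled by $\|\nabla v\|_\infty\|\nabla T\|_2$.
\item Hitting $\mathcal{H}_\epsilon(q-q_s)$ gives $\mathcal{H}_\epsilon'(q-q_s)\nabla q$, with $0\le \mathcal{H}_\epsilon'\le 1/\epsilon$.
\end{itemize}
For the last term I will try to exploit the sign. Since $\mathcal{H}_\epsilon'\ge0$ and $(\dive v)^-G^+\ge0$, the contribution to $\frac12\frac{d}{dt}\|\nabla q\|_2^2$ is
\[
-\frac{Hg}{\pi R}\int(\dive v)^-G^+(T)\,\mathcal{H}_\epsilon'(q-q_s)|\nabla q|^2\,dx\le0 .
\]
This is exactly why $G^+$ and the monotone regularization were chosen, and it makes the estimate independent of $\epsilon$.

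Gronwall with the integrable weight $1+\|\nabla u\|_\infty+\|\nabla v\|_\infty$ then closes the bound. The bounds on $\partial_t q$ and the other time derivatives in $L^2_tL^2$ follow directly from the equations.

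With these uniform bounds, weak-$*$ compactness gives the limit, and Aubin--Lions gives local strong compactness in $L^2_{loc}$ for $(u,v,T,q)$ and for $\nabla(u,v,T)$. This lets me pass to the limit in the nonlinear terms: $\mathcal{H}_\epsilon$ and $G^+$ are Lipschitz, and $(\dive v)^-$ converges strongly locally. Lower semicontinuity yields \eqref{ene_uni_epsilon}, and time continuity follows from the bounds on the time derivatives.

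Uniqueness for fixed $\epsilon$ comes from an $L^2$ difference estimate. The only delicate term is $(\dive v_1)^-[\mathcal{H}_\epsilon(q_1-q_s)-\mathcal{H}_\epsilon(q_2-q_s)]$. Using the Lipschitz constant $1/\epsilon$ and $\nabla v_1\in L^1_tL^\infty$, it is bounded by $\epsilon^{-1}\|\nabla v_1\|_\infty\|\delta q\|_2^2$. The transport term $\delta u\cdot\nabla q_2$, with $q_2$ only in $H^1$, is handled by Lemma \ref{lemma_uniqueness}, via the logarithmic Brezis--Gallouet-type inequality as in \cite{LT}.
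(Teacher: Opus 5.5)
Your proposal is correct and follows essentially the paper's route. The ingredients match step for step: $\eta$-uniform energy and $H^1$ bounds for $(P_{\epsilon,\eta})$; Lemma \ref{A3} with Gagliardo--Nirenberg for $\nabla v\in L^1_tL^\infty$; the $H^1$ bound for $q$, obtained by moving the nonnegative term $\int(\dive v)^-G^+(T)\mathcal{H}_\epsilon'(q-q_s)|\nabla q|^2\,dx$ to the left-hand side; Aubin--Lions to pass $\eta\to0$; and the logarithmic Gronwall Lemma \ref{lemma_uniqueness} for uniqueness. The differences are cosmetic: you handle $\nabla u$ with the Leray projector and Lemma \ref{A3} instead of the splitting $u=\bar u+\hat u$, you cancel the linear $T$--$v$ couplings by weighting instead of Young's inequality, and you track constants through the $H^1$ norm of the data, which is what the $\eta\to0$ limit with $H^1$ data actually requires.
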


In order to prove Theorem \ref{global_existence_thm_1}, it is necessary to consider the limit $\eta \to 0$ in the system \eqref{approx}. As a first step, we compute uniform estimates in $\eta$ satisfied by the unique strong solution $(u,v,T,q)$ of system \eqref{approx} with initial data $(u_0,v_0,T_0,q_0) \in H^2(\mathbb{R}^2)$.

\begin{proposition}(Basic energy estimate)\label{1ene}
 Let $(u,v,T,q)$ be the unique global strong solution to system \eqref{approx}, with initial data $(u_0,v_0,T_{0},q_0) \in H^2(\mathbb{R}^2)$. Then it holds
\begin{equation}\label{energy_approx}
    \begin{aligned}
        \sup_{t \in [0, \mathcal{T})} & \left(\|u\|_2^2 + \|v\|_2^2 +  \|T\|_2^2 + \|q\|_2^2 \right)  \\
        & + \int_0^{\mathcal{T}} 2\|\nabla u\|_2^2 + \|\nabla v\|_2^2 + \|\nabla T\|_2^2 + 2\|\sqrt{\eta} \nabla q\|_2^2 dt \leq C, 
    \end{aligned}
\end{equation}
where $C$ is a constant indipendent of $\eta, \epsilon$.
\end{proposition}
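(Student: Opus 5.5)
We test each equation of \eqref{approx} in $L^2(\mathbb{R}^2)$ against its own unknown, add the resulting identities, and close with Gronwall. The constants depend only on $\mathcal{T}$ and the $L^2$ norm of the data, never on $\eta$ or $\epsilon$. Since the solution from Proposition \ref{global_existence_fixed_eta} lies in $C([0,\mathcal{T}];H^2)\cap L^2(0,\mathcal{T};H^3)$, all integrations by parts are justified.

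\textbf{Velocity equations.} Testing $\eqref{approx}_1$ with $u$, the convection term and the pressure vanish because $\dive u=0$. The only remaining term is
\[
\int \dive(v\otimes v)\cdot u = -\int (v\otimes v):\nabla u .
\]
Testing $\eqref{approx}_2$ with $v$, the term $\int (u\cdot\nabla v)\cdot v$ vanishes, while $\int (v\cdot\nabla u)\cdot v$ is exactly the same trilinear quantity $\int (v\otimes v):\nabla u$. So the two contributions cancel when the estimates for $u$ and $v$ are added, as in \cite{LT}. This cancellation is the key structural point of the barotropic--baroclinic coupling. The buoyancy term gives
\[
\frac{H}{\pi}\frac{g}{\theta_0}\int \nabla T\cdot v = -\frac{H}{\pi}\frac{g}{\theta_0}\int T\,\dive v .
\]

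\textbf{Temperature equation.} Testing $\eqref{approx}_4$ with $T$, the transport term vanishes and we get
\[
\frac12\frac{d}{dt}\|T\|_2^2+\|\nabla T\|_2^2 = -\frac{H\theta_0N^2}{\pi g}\int T\,\dive v + \frac{Hg}{\pi R}\int (\dive v)^-\mathcal{H}_\epsilon(q-q_s)G^+(T)\,T .
\]
The $\int T\,\dive v$ terms are not exactly cancelled by the $v$-equation unless we rescale. One could weight the $T$-estimate by $\frac{g^2}{\theta_0^2N^2}$ to cancel them. Alternatively, we simply bound $|\int T\,\dive v|\le \delta\|\nabla v\|_2^2 + C_\delta\|T\|_2^2$, absorb the gradient, and leave the $\|T\|_2^2$ part to Gronwall. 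The factors $1$ in front of $\|\nabla v\|_2^2$ and $\|\nabla T\|_2^2$ in \eqref{energy_approx} suggest this absorption route.

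\textbf{Precipitation terms (main obstacle).} Here $|\mathcal{H}_\epsilon|\le1$ and $0\le G^+\le C_G$ by \eqref{prop_G}, so the precipitation source is bounded by $C|\dive v|$, uniformly in $\epsilon$. In the $T$ equation,
\[
\Bigl|\int (\dive v)^-\mathcal{H}_\epsilon G^+(T)\,T\Bigr| \le C_G\|\nabla v\|_2\|T\|_2 \le \tfrac14\|\nabla v\|_2^2 + C\|T\|_2^2 .
\]
Testing $\eqref{approx}_5$ with $q$, the transport term vanishes and the viscosity gives $\eta\|\nabla q\|_2^2$. The remaining terms are $\bar Q\int q\,\dive v$ and the precipitation term, and both are bounded by $C\|\nabla v\|_2\|q\|_2\le\tfrac14\|\nabla v\|_2^2+C\|q\|_2^2$. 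One must check that every absorbed piece uses only $\|\nabla v\|_2^2$, since $q$ has no dissipation uniform in $\eta$. The total absorbed amount must stay below $\|\nabla v\|_2^2$, so the $\delta$'s should be chosen so that a coefficient of at least $1$ for $\|\nabla v\|_2^2$ survives on the left after multiplying the whole inequality by 2.

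\textbf{Conclusion.} Summing everything gives
\[
\frac{d}{dt}\|(u,v,T,q)\|_2^2 + 2\|\nabla u\|_2^2+\|\nabla v\|_2^2+\|\nabla T\|_2^2+2\eta\|\nabla q\|_2^2 \le C\|(T,q)\|_2^2 .
\]
Gronwall's inequality then yields \eqref{energy_approx} with $C=C(\mathcal{T},\|(u_0,v_0,T_0,q_0)\|_2)$, independent of $\eta$ and $\epsilon$.
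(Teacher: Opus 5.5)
Your proposal is correct and follows essentially the paper's proof. You test each equation of \eqref{approx} with its own unknown, use the cancellation of $\int \dive(v\otimes v)\cdot u$ against $\int (v\cdot\nabla u)\cdot v$ (which the paper uses implicitly in \eqref{eneu}), bound all coupling and precipitation terms via $|\mathcal{H}_\epsilon|\le 1$ and $0\le G^+\le C_G$, and close with Gronwall. The only cosmetic difference is that the paper absorbs $\int \nabla T\cdot v$ into $\frac12\|\nabla T\|_2^2$ with a $\tilde C\|v\|_2^2$ remainder, whereas you integrate by parts and absorb it into $\|\nabla v\|_2^2$.

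One small slip: testing $\eqref{approx}_4$ with $T$ gives $+\frac{H\theta_0 N^2}{\pi g}\int T\,\dive v$ on the right-hand side, not a minus sign. This correct sign is what makes your positive weight $g^2/(\theta_0^2N^2)$ cancel the buoyancy term. Since you only use absolute values, the slip does not affect your argument.
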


\begin{proof}
We start by multiplying equation $\eqref{approx}_1$ by $u$ and $\eqref{approx}_2$ by $v$, integrating over $\mathbb{R}^2$ we get
\begin{equation}\label{eneu}
         \frac{1}{2} \frac{d}{dt} \left( \|u\|^2_2 + \|v\|^2_2  \right) +  \|\nabla u\|^2_2 +  \|\nabla v\|^2_2 
        - \frac{H}{\pi} \frac{g}{\theta_0} \int_{\mathbb{R}^2} \nabla T  \cdot v dx = 0.
\end{equation}
Then we multiply $\eqref{approx}_4$ by $T$ and integrate over $\mathbb{R}^2$,
\begin{equation}\label{enete}
    \frac{1}{2} \frac{d}{dt} \|T\|^2_2 +  \|\nabla T\|_2^2 =  \frac{H \theta_0 N^2}{\pi g} \int_{\mathbb{R}^2} T \dive v dx+ \frac{Hg}{\pi R}\int_{\mathbb{R}^2} (\dive v)^- G^+(T)\mathcal{H}_\epsilon(q-q_s) T dx.
\end{equation}
Finally, we multiply $\eqref{approx}_5$ by $q$ and we obtain
\begin{equation}\label{eneq}
\begin{aligned}
    & \frac{1}{2} \frac{d}{dt} \|q\|_2^2 + \eta \|\nabla q\|_2^2 = - \bar{Q} \int_{\mathbb{R}^2} q \cdot \dive v dx - \frac{Hg}{\pi R}\int_{\mathbb{R}^2}(\dive v)^- \mathcal{H}_\epsilon(q-q_s) G^+(T)  \cdot q \; dx.
\end{aligned}
\end{equation}
We define $E(t)$ as:
\begin{equation*}
    E(t):= \frac{1}{2} \left(\|u\|_2^2 + \|v\|_2^2 +  \|T\|_2^2 + \|q\|_2^2 \right)(t),
\end{equation*}
Summing up $\eqref{eneu}, \eqref{enete}$ and $\eqref{eneq}$ one obtains 
\begin{equation}\label{der_ene}
    \begin{aligned}
        \frac{d}{dt} & E(t)  + \|\nabla u\|_2^2 + \|\nabla v\|_2^2 +  \|\nabla T\|_2^2 + \eta \|\nabla q\|_2^2 = \frac{H}{\pi} \frac{g}{\theta_0} \int_{\mathbb{R}^2} \nabla T  \cdot v dx \\
        &  +  \frac{H \theta_0 N^2}{\pi g} \int_{\mathbb{R}^2} T \dive v dx + \frac{Hg}{\pi R}\int_{\mathbb{R}^2} (\dive v)^- G^+(T)\mathcal{H}_\epsilon(q-q_s) T dx \\
        &- \bar{Q} \int_{\mathbb{R}^2} q \cdot \dive v dx - \frac{Hg}{\pi R}\int_{\mathbb{R}^2}(\dive v)^- \mathcal{H}_\epsilon(q-q_s) G^+(T)  \cdot q \; dx.
    \end{aligned}
\end{equation}

Since $\left| \mathcal{H}_\epsilon (q-q_s)\right|\leq 1$, by using \eqref{prop_G} we get
\begin{equation}\label{gw}
    \begin{aligned}
        &\bar{Q} \int_{\mathbb{R}^2} \left| q \cdot \dive v \right| dx +   \frac{Hg}{\pi R} \int_{\mathbb{R}^2} \left| (\dive v)^-\mathcal{H}_{\epsilon}(q-q_s) G^+(T)  \cdot q \right|\; dx \\
        & + \frac{H \theta_0 N^2}{\pi g} \int_{\mathbb{R}^2} |T \dive v| dx+ \frac{Hg}{\pi R}\int_{\mathbb{R}^2} \left|(\dive v)^- G^+(T)\mathcal{H}_\epsilon(q-q_s) T\right| dx \\
        & \leq   \frac{1}{2} \|\nabla v\|_2^2 +  \Tilde{C} \|(T,q)\|_2^2
    \end{aligned}
\end{equation}
and 
\begin{equation}\label{5.7}
    \frac{H}{\pi} \frac{g}{\theta_0} \int_{\mathbb{R}^2} |\nabla T  \cdot v |dx \leq \frac{1}{2} \|\nabla T\|_2^2 + \tilde{C}\|v\|_2^2,
\end{equation}
where $\tilde{C}= \tilde{C}(H, \pi,g,\theta_0, \bar{Q}, C_G)$, $C_G$ defined in \eqref{prop_G}. Combining  \eqref{der_ene} with \eqref{gw} by standard application of Gronwall lemma we conclude that
\begin{equation*}
    \begin{aligned}
        \sup_{t \in (0, \mathcal{T})} & \left(\|u\|_2^2 + \|v\|_2^2 +  \|T\|_2^2 + \|q\|_2^2 \right)  \\
        & + \int_0^{\mathcal{T}} 2\|\nabla u\|_2^2 + \|\nabla v\|_2^2 + \|\nabla T\|_2^2 + 2\|\sqrt{\eta} \nabla q\|_2^2 dt \leq C, 
    \end{aligned}
\end{equation*}
where $C=C(H,\pi,g,\theta_0,C_G, \|u_0,v_0,T_{0},q_0\|_{H^2},\mathcal{T})$ is indipendent of $\epsilon$ and $\eta$. 
\end{proof}
At this point, we establish the $L^{\infty}((0,\mathcal{T}); H^1(\mathbb{R}^2))$ regularity for $(u,v,T)$.

\begin{proposition}\label{prop2}
    The following estimate holds:
    \begin{equation}\label{3ene}
        \sup_{t \in[0,\mathcal{T})} \|(\nabla u, \nabla v, \nabla T)\|_2^2 + \int_0^\mathcal{T}\|\Delta u, \Delta v, \Delta T\|_2^2 dt \leq C,
    \end{equation}
    where C is a positive constant $C= C(\mathcal{T},\|(u_0,v_0,q_0,T_{0}\|_{H^2})$ indipendent on $\epsilon$ and $\eta$. 
\end{proposition}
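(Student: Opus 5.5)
We test the equations of \eqref{approx} for $(u,v,T)$ against $-\Delta u$, $-\Delta v$, $-\Delta T$, add the three identities, and close the result with a Gronwall argument. The basic energy bound \eqref{energy_approx} of Proposition \ref{1ene} supplies $L^\infty_tL^2_x$ control and $L^2_tL^2_x$ control of the gradients. The key structural point is that the moisture equation need not be tested at this stage. The quantity $q$ enters the equations for $(u,v,T)$ only through $\mathcal{H}_\epsilon(q-q_s)$, which is bounded by $1$ by \eqref{prop_H}, and $G^+$ is bounded by $C_G$ by \eqref{prop_G}. Hence the precipitation term on the right-hand side of the $T$-equation is bounded pointwise by $C|\nabla v|$, uniformly in $\epsilon$ and $\eta$. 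This is exactly why all constants will be independent of $\epsilon,\eta$.

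For the $u$-equation we follow \cite{LT2}. The pressure term vanishes because $\dive u=0$, and in two dimensions $\int (u\cdot\nabla u)\cdot\Delta u\,dx=0$. For the coupling term we use Ladyzhenskaya's inequality:
\[
\|\dive(v\otimes v)\|_2\le C\|v\|_4\|\nabla v\|_4\le C\|v\|_2^{1/2}\|\nabla v\|_2\|\Delta v\|_2^{1/2}.
\]
For the $v$-equation the transport terms are handled as
\[
\|u\|_4\|\nabla v\|_4+\|v\|_4\|\nabla u\|_4\le C\|u\|_2^{1/2}\|\nabla u\|_2^{1/2}\|\nabla v\|_2^{1/2}\|\Delta v\|_2^{1/2}+C\|v\|_2^{1/2}\|\nabla v\|_2^{1/2}\|\nabla u\|_2^{1/2}\|\Delta u\|_2^{1/2},
\]
and the forcing satisfies $|\int\nabla T\cdot\Delta v\,dx|\le \tfrac18\|\Delta v\|_2^2+C\|\nabla T\|_2^2$. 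For the $T$-equation, the linear term gives $|\int \dive v\,\Delta T\,dx|\le \tfrac18\|\Delta T\|_2^2+C\|\nabla v\|_2^2$. The precipitation term is controlled by $C\|\nabla v\|_2\|\Delta T\|_2$ thanks to the uniform bound above. The transport term gives
\[
\Big|\int u\cdot\nabla T\,\Delta T\,dx\Big|\le C\|u\|_2^{1/2}\|\nabla u\|_2^{1/2}\|\nabla T\|_2^{1/2}\|\Delta T\|_2^{3/2}.
\]
Each of these terms we split with Young's inequality so that the $\|\Delta\cdot\|_2^2$ contributions are absorbed into the dissipation.

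After absorption we obtain, for $Y=\|(\nabla u,\nabla v,\nabla T)\|_2^2$,
\[
Y'+\|(\Delta u,\Delta v,\Delta T)\|_2^2\le C\bigl(1+\|(u,v)\|_2^2\|(\nabla u,\nabla v)\|_2^2\bigr)Y+C\|\nabla v\|_2^2.
\]
The coefficient is integrable in time by \eqref{energy_approx}, so Gronwall's lemma gives \eqref{3ene}. The constant depends only on $\mathcal{T}$ and the $H^1$ (hence $H^2$) norm of the data, and not on $\epsilon$ or $\eta$.

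The main obstacle is the cubic coupling between the transport terms. The troublesome pieces are $v\cdot\nabla u$ and $\dive(v\otimes v)$, since they require the $u$ and $v$ estimates to be carried out simultaneously rather than separately. The Ladyzhenskaya splittings must be chosen so that only powers strictly less than $2$ of the second derivatives appear, which leaves a coefficient in front of $Y$ that is $L^1$ in time. The temperature transport term needs the same care, since it carries $\|\Delta T\|_2^{3/2}$. If the direct splitting does not close for the $u$--$v$ coupling, I would instead follow \cite{LT2} and first derive an $L^\infty_tL^4_x$ bound for $v$, testing the $v$-equation with $|v|^2v$, and then use it to handle $\dive(v\otimes v)$ and $v\cdot\nabla u$.
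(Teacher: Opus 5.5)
Your proposal is correct and is essentially the paper's argument. You test the $u$, $v$, $T$ equations with $-\Delta u$, $-\Delta v$, $-\Delta T$, absorb the second-order terms via Ladyzhenskaya and Young, and close with Gronwall using the coefficient $\|(u,v)\|_4^4\le C\|(u,v)\|_2^2\|(\nabla u,\nabla v)\|_2^2$, which is time-integrable by Proposition~\ref{1ene}; the precipitation term is harmless since $|\mathcal{H}_\epsilon|\le 1$ and $G^+\le C_G$. The 2D cancellation $\int (u\cdot\nabla u)\cdot\Delta u\,dx=0$ is a convenience the paper does not use, and the $L^\infty_tL^4_x$ fallback for $v$ is unnecessary because the direct splitting already closes.
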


\begin{proof}
We begin by multiplying the equations 
$\eqref{approx}_1,\eqref{approx}_2, \eqref{approx}_4$ 
by $(-\Delta u, -\Delta v, -\Delta T)$, respectively.
Integrating by parts, we obtain

    \begin{equation}\label{gradu}
    \begin{aligned}
        \frac{1}{2}\frac{d}{dt}\|\nabla u\|_2^2 + \|\Delta u\|_2^2 & = \int_{\mathbb{R}^2}(u\nabla u + \dive(v \otimes v))\Delta u dx \\
        & \leq C(\|u\|_4\|\nabla u\|_4 + \|v\|_4\|\nabla v\|_4)\|\Delta u\|_2 \\
        & \leq C(\|u\|_4^2\|\nabla u\|_4^2 + \|v\|_4^2 \|\nabla v\|_4^2)+ \frac{1}{8}\|\Delta u\|_2^2 \\
        & \leq C( \|u\|_4^2\|\nabla u\|_2\|\Delta u\|_2 + \|v\|_4^2\|\nabla v\|_2\|\Delta v\|_2) + \frac{1}{8}\|\Delta u\|_2^2 \\
        &\leq C(\|u\|_4^4\|\nabla u\|_2^2 + \|v\|_4^4 \|\nabla v\|_2^2) + \frac{1}{4}\|\Delta u\|_2^2 + \frac{1}{8}\|\Delta v\|_2^2,
    \end{aligned}
\end{equation}
    thanks to Young and Ladyzhenskaya inequalities. Similarly for $v$ we get
     \begin{equation}\label{gradv}
    \begin{aligned}
        \frac{1}{2}\frac{d}{dt}\|\nabla v\|_2^2 + \|\Delta v\|_2^2 \leq C\|(u,v)\|_4^4 \|(\nabla u,\nabla v)\|_2^2 + \frac{3}{8}\|\Delta v\|_2^2 + \frac{1}{4}\|\Delta u\|_2^2 + C \|\nabla T\|_2^2.
    \end{aligned}
\end{equation}
Concerning the equation for $T$ we obtain,
\begin{equation}\label{gradqt}
    \begin{aligned}
        \frac{1}{2}\frac{d}{dt}\|\nabla T\|_2^2 + \|\Delta T\|_2^2 \leq C\|u\|_4^4 \|\nabla T\|_2^2 + \frac{1}{2}\|\Delta T\|_2^2 + C\|\nabla v\|_2^2.
    \end{aligned}
\end{equation}
Thanks to Proposition \ref{1ene} we note that
\begin{equation*}
    \int_0^\mathcal{T} \|u\|_4^4 dt \leq \int_0^\mathcal{T} \|u\|_2^2 \|\nabla u\|_2^2 dt \leq \sup_{ t \in [0,\mathcal{T}]} \|u\|_2^2 \int_0^\mathcal{T} \|\nabla u\|_2^2 dt \leq  C,
\end{equation*}
that holds also for $v$ and $T$. Therefore, by summing \eqref{gradu}, \eqref{gradv} and \eqref{gradqt}, we recover \eqref{3ene} by applying Gronwall's lemma.

\end{proof}

\begin{proposition}\label{grad_infty}
     The following estimate holds:
    \begin{equation}\label{grad_inf}
        \int_0^\mathcal{T}\|(\nabla u, \nabla v)\|_\infty dt \leq C 
    \end{equation}
    where C is a positive constant $C= C(\mathcal{T},\|(u_0,v_0,q_0,T_{0}\|_{H^2})$ indipendent on $\epsilon$ and $\eta$.
\end{proposition}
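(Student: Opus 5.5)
We need to bound $\int_0^{\mathcal{T}}\|(\nabla u,\nabla v)\|_\infty\,dt$ uniformly in $\epsilon,\eta$. Since we only control $\Delta u,\Delta v\in L^2_tL^2_x$, the $H^2\hookrightarrow W^{1,\infty}$ embedding fails in two dimensions. We therefore follow \cite{LT,LT2} and use the maximal regularity of Lemma \ref{A3} with some $p\in(2,\infty)$. Then Gagliardo--Nirenberg gives
\[
\|\nabla f\|_\infty\le C\|\nabla f\|_2^{\theta}\|\Delta f\|_p^{1-\theta}\le C\|\nabla f\|_2+C\|\Delta f\|_p ,
\]
and since $\|\nabla u\|_2,\|\nabla v\|_2$ are bounded in time by Proposition \ref{prop2}, it suffices to bound $\int_0^{\mathcal{T}}\|(\Delta u,\Delta v)\|_p\,dt$. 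I would fix $p=4$.

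For $v$, I would write $\eqref{approx}_2$ as a heat equation $\partial_t v-\Delta v=f$ with
\[
f=-u\cdot\nabla v-v\cdot\nabla u+\tfrac{H}{\pi}\tfrac{g}{\theta_0}\nabla T .
\]
Lemma \ref{A3} then gives $\int_0^{\mathcal{T}}\|\Delta v\|_4\,dt\le C(1+\sqrt{\mathcal{T}})\big(\|\nabla v_0\|_2+\|f\|_{L^2_tL^4_x}\big)$. Each term of $f$ is bounded using Ladyzhenskaya/Gagliardo--Nirenberg together with the bounds of Propositions \ref{1ene} and \ref{prop2}:
\[
\|u\cdot\nabla v\|_4\le\|u\|_\infty\|\nabla v\|_4\le C\|u\|_{H^2}\,\|\nabla v\|_2^{1/2}\|\Delta v\|_2^{1/2},
\]
or more simply $\|u\|_\infty\le C\|u\|_2^{1/2}\|\Delta u\|_2^{1/2}$, so that $\|u\|_\infty^2\|\nabla v\|_4^2\le C\|\Delta u\|_2\|\Delta v\|_2\in L^1_t$. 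The term $v\cdot\nabla u$ is handled in the same way. For the temperature term, $\|\nabla T\|_4\le C\|\nabla T\|_2^{1/2}\|\Delta T\|_2^{1/2}$, which lies in $L^4_t\subset L^2_t$ by \eqref{3ene}. This is the point where the diffusivity of $T$ is essential, and it keeps the estimate independent of $q$, so no $\epsilon$ or $\eta$ dependence enters.

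For $u$, I would apply the Leray projector $\mathbb{P}$ to $\eqref{approx}_1$ to get $\partial_t u-\Delta u=-\mathbb{P}(u\cdot\nabla u+\dive(v\otimes v))$. Since $\mathbb{P}$ is bounded on $L^4$, the same bounds apply: $\|u\cdot\nabla u\|_4$ and $\|v\cdot\nabla v\|_4,\ \|v\dive v\|_4$ are controlled as above. Lemma \ref{A3} (applied componentwise; it is unchanged by the projection) gives $\int_0^{\mathcal{T}}\|\Delta u\|_4\,dt\le C$.

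The main obstacle is checking that every forcing term lies in $L^2_tL^4_x$ using only $H^1$-level bounds plus $\Delta\in L^2_tL^2_x$. Any term of the form $\|\Delta\cdot\|_2^{\alpha}$ must appear with $\alpha\le1$ after squaring, and the Gagliardo--Nirenberg exponents above respect this. A secondary subtlety is that the constant depends only on the quantities in Propositions \ref{1ene} and \ref{prop2}, so it is independent of $\epsilon$ and $\eta$ as claimed.
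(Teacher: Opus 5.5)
Your proposal is correct. For $v$ it follows the paper: Lemma \ref{A3} with $p=4$, the forcing $-u\cdot\nabla v-v\cdot\nabla u+\frac{H}{\pi}\frac{g}{\theta_0}\nabla T$ placed in $L^2_tL^4_x$ via Ladyzhenskaya/Gagliardo--Nirenberg and Propositions \ref{1ene}--\ref{prop2}, then interpolation. The paper interpolates with $\|v\|_\infty^{1/3}\|\Delta v\|_4^{2/3}$, you with $\|\nabla v\|_2^{1/3}\|\Delta v\|_4^{2/3}$; both work.

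The genuine difference is in how you handle $u$. The paper follows Proposition 3.5 of \cite{LT} and splits $u=\bar u+\hat u$:
\begin{itemize}
\item $\bar u$ solves the Stokes system with forcing $-u\cdot\nabla u-\dive(v\otimes v)$ and zero initial data, and is bounded by maximal regularity for Stokes;
\item $\hat u$ solves the homogeneous Stokes system with data $u_0$, and a weighted energy estimate with multiplier $(t\Delta^2-\Delta)\hat u$ gives $\int_0^{\mathcal T}\|\nabla\hat u\|_\infty\,dt\le C\|\nabla u_0\|_2\,\mathcal T^{1/4}$.
\end{itemize}
You instead note that, since $\dive u=0$ on $\mathbb{R}^2$, the Leray projector turns the momentum equation into a heat equation with forcing $-\mathbb{P}(u\cdot\nabla u+\dive(v\otimes v))$. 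Because $\mathbb{P}$ is bounded on $L^4$, Lemma \ref{A3} then applies exactly as for $v$, and it already absorbs the $H^1$ initial datum.

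What your route buys: it is shorter and treats $u$ and $v$ uniformly. It also sidesteps a loose step in the paper. The paper asserts $\int_0^{\mathcal T}\|\Delta\bar u\|_4\,dt\le C\int_0^{\mathcal T}\|U\nabla U\|_4\,dt$, which does not follow from the $L^q_{t,x}$ Stokes estimate it cites; it really needs mixed-norm $L^2_tL^4_x$ maximal regularity. Your argument packages that requirement into Lemma \ref{A3}.

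What the paper's route buys: it stays close to \cite{LT}, and it makes the contribution of the initial datum explicit through an elementary energy computation rather than hiding it inside the lemma.
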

\begin{proof}
    We start by showing that $\nabla v \in L^1((0,\mathcal{T}); L^\infty(\mathbb{R}^2))$. To prove this, we aim to apply Lemma \ref{A3}; in particular, we need to check that
\begin{equation*}
    f(x,t) := - u \nabla v - v \nabla u + \nabla T \in L^2((0,\mathcal{T}); L^4(\mathbb{R}^2)),
\end{equation*}
in order to conclude that $\Delta v \in L^1((0,\mathcal{T}); L^4(\mathbb{R}^2))$.

    First, we note that $\nabla T \in L^2((0,\mathcal{T}; L^4(\mathbb{R}^2))$ and 
    \begin{equation}\label{grad_4}
        \begin{aligned}
            & \int_0^\mathcal{T} \|(\nabla u,\nabla v)\|_4^4 dt \leq \int_0^\mathcal{T}\|(\nabla u, \nabla v)\|_2^2 \|(\Delta u, \Delta v)\|_2^2 dt \leq C
        \end{aligned}
    \end{equation}
    thanks to Proposition \ref{prop2}. We now proceed to estimate the term $u \nabla v$ as follows:
\begin{equation*}
\begin{aligned}
   \int_0^\mathcal{T}\|u \nabla v\|_4^2 dt \leq & \int_0^\mathcal{T}\|u\|_\infty^2 \|\nabla v\|_4^2 dt \leq \left( \int_0^\mathcal{T}\|u\|_\infty^4 dt\right)^{1/2} \left( \int_0^\mathcal{T}\|\nabla v\|_4^4 dt\right)^{1/2} \\
   & \leq C\left( \int_0^\mathcal{T}\|u\|_2^2 \|\Delta u\|_2^2 dt\right)^{1/2} \left( \int_0^\mathcal{T}\|\nabla v\|_4^4 dt\right)^{1/2} \\
   & \leq C \left( \sup_{t \in [0,\mathcal{T}]} \|u\|_2^2\int_0^\mathcal{T}\|\Delta u\|_2^2 dt\right)^{1/2} \left( \int_0^\mathcal{T}\|\nabla v\|_4^4 dt\right)^{1/2} \leq C
\end{aligned}
\end{equation*}
thanks to \eqref{grad_4} and Propositions \ref{1ene} and \ref{prop2}. Similarly, it can be shown that $v \nabla u \in L^2((0,\mathcal{T}); L^4(\mathbb{R}^2))$. At this stage, Lemma \ref{A3} can be applied to conclude that

\begin{equation}\label{lapla_v_4}
    \int_0^\mathcal{T}\|\Delta v\|_4 dt \leq C,
\end{equation}
where $C$ is a generic constant indipendent from $\epsilon,\eta$. By using \eqref{lapla_v_4}, Proposition \ref{1ene}, Proposition \ref{prop2} and Gagliardo Niremberg Sobolev inequality we deduce that
\begin{equation}\label{grad_v_infty}
    \begin{aligned}
        \int_0^\mathcal{T}\|\nabla v\|_\infty dt & \leq \int_0^\mathcal{T} \|v\|_\infty^{\frac{1}{3}}\|\Delta v\|_4^{\frac{2}{3}}dt \leq \left(\int_0^\mathcal{T} \|v\|_\infty dt \right)^{\frac{1}{3}} \left( \int_0^\mathcal{T} \|\Delta v\|_4dt \right)^{\frac{2}{3}}\!\!\leq  C,
    \end{aligned}
\end{equation}
where C is a positive constant $C= C(\mathcal{T},\|(u_0,v_0,q_0,T_{0}\|_{H^2})$ indipendent on $\epsilon$ and $\eta$. 

Following the approach of Proposition 3.5 in \cite{LT}, we establish that 
\[
\int_0^\mathcal{T} \|\nabla u\|_{\infty} \, dt
\]
is bounded by a constant independent of both $\epsilon$ and $\eta$. In particular, we decompose $u= \bar{u}+ \hat{u}$ such that $\bar{u}$ and $\hat{u}$ are the unique solutions to the following systems:
\begin{equation}
    \left\{ \begin{aligned}\label{baru}
        & \partial_t \bar{u} - \Delta \bar{u} + \nabla \bar{p}= - u \nabla u - \dive (v \otimes v) \\
        & \dive \bar{u} = 0\\
        & \bar{u}|_{t=0} = 0
    \end{aligned} \right.
\end{equation}
and 
\begin{equation}\label{hatu}
    \left\{ \begin{aligned}
        & \partial_t \hat{u} - \Delta \hat{u} + \nabla \hat{p}= 0\\
        & \dive \hat{u}= 0\\
        &\hat{u}|_{t=0} = u_0.
    \end{aligned} \right.
\end{equation}
We start by  estimating  $\bar{u}$. Denote $U=(u,v)$, by the $L^q(0,\mathcal{T}; W^{2,q})$ type estimates for the Stokes equations we have (see \cite{LT}),
\begin{equation*}
    \|(\partial_t \bar{u}, \Delta \bar{u}\|_{L^q( \mathbb{R}^2 \times (0,\mathcal{T})} \leq C \|U \nabla U\|_{L^q( \mathbb{R}^2 \times (0,\mathcal{T})}, 
\end{equation*}
for any $q \in (0, \infty)$. Therefore,
\begin{equation*}
\begin{aligned}
    &  \int_0^\mathcal{T} \|\Delta \bar{u}\|_4 dt \leq \int_0^\mathcal{T} \|U \nabla U\|_4 dt \leq C \int_0^\mathcal{T} \|\nabla U\|_4 \|U\|_{\infty} dt\\
    & \leq C \left( \int_0^\mathcal{T} \|U\|_\infty^4 dt \right)^{\frac{1}{4}} \left( \int_0^\mathcal{T} \|\nabla U\|_4^4  dt\right)^{\frac{1}{4}} \sqrt{\mathcal{T}} \leq C
\end{aligned}
\end{equation*}
thanks to \eqref{grad_4} and Propositions \ref{1ene}-\ref{prop2}. As previously observed for $v$, by applying the Gagliardo–Nirenberg–Sobolev inequality and invoking Propositions \ref{1ene} and \ref{prop2} for $\bar u$, we obtain

\begin{equation}\label{grad_u_bar_infty}
    \begin{aligned}
        \int_0^\mathcal{T}\|\nabla \bar{u}\|_\infty dt & \leq \int_0^\mathcal{T} \|\bar u\|_\infty^{\frac{1}{3}}\|\Delta \bar u \|_4^{\frac{2}{3}}dt \leq \left(\int_0^\mathcal{T} \|\bar u\|_\infty dt \right)^{\frac{1}{3}} \left( \int_0^\mathcal{T} \|\Delta \bar u\|_4dt \right)^{\frac{2}{3}} \leq  C. 
    \end{aligned}
\end{equation}
Now we estimate $\hat{u}$, multiplying equation \eqref{hatu} by $(t \Delta^2 - \Delta)\hat{u}$ and integrating over $\mathbb{R}^2$ we get
\begin{equation*}
    \frac{1}{2} \frac{d}{dt} ( \|\nabla \hat{u}\|_2^2 + \|\sqrt{t} \Delta \hat{u}\|_2^2) + \frac{1}{2} \|\Delta \hat{u}\|_2^2 + \|\sqrt{t} \nabla \Delta \hat{u}\|_2^2 = 0,
\end{equation*}
and therefore
\begin{equation*}
    \sup_{t \in [0,\mathcal{T})} ( \|\nabla \hat{u}\|_2^2 + \|\sqrt{t} \Delta \hat{u}\|_2^2) + \int_0^\mathcal{T} \|\Delta \hat{u}\|_2^2 + \|\sqrt{t} \nabla \Delta \hat{u}\|_2^2dt \leq \|\nabla u_0\|_2^2.
\end{equation*}
Since $\|\phi\|_\infty \leq C \|\phi\|_2^{\frac{1}{2}} \|\Delta \phi\|_2^{\frac{1}{2}}$ we have
\begin{equation*}
\begin{aligned}
    \int_0^\mathcal{T} \|\nabla \hat{u}\|_\infty dt & \leq C \int_0^\mathcal{T} \|\nabla \hat{u}\|_2^{\frac{1}{2}} \|\nabla \Delta \hat{u}\|_2^{\frac{1}{2}}dt \\
    & = C \int_0^\mathcal{T} \|\nabla \hat{u}\|_2^{\frac{1}{2}} \|\sqrt{t}\nabla \Delta \hat{u}\|_2^{\frac{1}{2}}t^{-\frac{1}{4}}dt \\
    & \leq C \left( \int_0^\mathcal{T} \|\nabla \hat{u}\|_2^2 \right)^{\frac{1}{4}}\left( \int_0^\mathcal{T} \|\sqrt{t}\nabla \Delta \hat{u}\|_2^2\right)^{\frac{1}{4}}\mathcal{T}^{\frac{1}{4}} \\
    &\leq C \|\nabla u_0\|_2^{\frac{1}{2}} \|\nabla u_0\|_2^{\frac{1}{2}} \mathcal{T}^{\frac{1}{4}}.
\end{aligned}
\end{equation*}
Combining the above estimates we get:
\begin{equation}\label{control_nablau}
    \int_0^\mathcal{T} \|\nabla u\|_\infty dt \leq \int_0^\mathcal{T} \|\nabla \bar u \|_\infty  + \|\nabla \hat u\|_\infty dt \leq C, 
\end{equation}
where $C=C(\mathcal{T}, \|(u_0,v_0,q_0,T_{0}\|_{H^2})$. Now, the proof is concluded. 
\end{proof}

At this stage, we are ready to show that $q \in L^{\infty}((0,\mathcal{T}); H^1(\mathbb{R}^2))$ as well.

\begin{proposition} \label{h1_q}
    Let $(u,v,T,q)$ be the unique global strong solution to system \eqref{approx} with initial data $(u_0,v_0,T_{0},q_0) \in H^2(\mathbb{R}^2)$. Then we have the following estimate, for all $\mathcal{T} \in [0,\infty)$:
    \begin{equation*}
    \sup_{t \in (0,\mathcal{T})} \|\nabla q\|_2^2  + \eta \int_0^{\mathcal{T}}\|\Delta q \|_2^2 dt \leq C
    \end{equation*}
    where $C:= C(\mathcal{T},\|(u_0,v_0,T_0,q_0)\|_{H^2})$ is a nondecreasing continuous function on  $\mathcal{T} \in [0,\infty)$ and it is indipendent of $\eta$.
    \end{proposition}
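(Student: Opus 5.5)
We differentiate the moisture equation $\eqref{approx}_5$ in space and test it against $\nabla q$, or equivalently multiply by $-\Delta q$ and integrate over $\mathbb{R}^2$. The artificial viscosity gives $\eta\|\Delta q\|_2^2$ on the left, so we obtain
\begin{equation*}
\frac12\frac{d}{dt}\|\nabla q\|_2^2+\eta\|\Delta q\|_2^2 = -\int_{\mathbb{R}^2}\partial_k u\cdot\nabla q\,\partial_k q\,dx - \bar Q\int_{\mathbb{R}^2}\nabla \dive v\cdot\nabla q\,dx - \frac{Hg}{\pi R}\int_{\mathbb{R}^2}\nabla\big[(\dive v)^-\mathcal{H}_\epsilon(q-q_s)G^+(T)\big]\cdot\nabla q\,dx .
\end{equation*}
The term $\int u\cdot\nabla\partial_k q\,\partial_k q$ vanishes because $\dive u=0$. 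All manipulations are justified since, by Proposition \ref{global_existence_fixed_eta}, $q\in L^2_{loc}H^3$.

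The transport commutator is bounded by $\|\nabla u\|_\infty\|\nabla q\|_2^2$. The linear term is bounded by $\bar Q\|\nabla^2 v\|_2\|\nabla q\|_2\le C\|\Delta v\|_2^2+C\|\nabla q\|_2^2$. For the precipitation term we use the chain rule for the Lipschitz functions $(\cdot)^-$, $\mathcal{H}_\epsilon$ and $G^+$. Their gradients split as
\begin{equation*}
|\nabla[(\dive v)^-]|\le|\nabla^2 v|,\qquad |\nabla G^+(T)|\le M_G|\nabla T|,\qquad \nabla\mathcal{H}_\epsilon(q-q_s)=\epsilon^{-1}\mathbf 1_{\{0<q-q_s<\epsilon\}}\nabla q .
\end{equation*}
The first two pieces are harmless. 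They are controlled by $C\|\nabla^2 v\|_2\|\nabla q\|_2$ and by $C\|\nabla v\|_\infty\|\nabla T\|_2\|\nabla q\|_2$, using $|\mathcal{H}_\epsilon|\le1$ and $|G^+|\le C_G$.

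The main obstacle is the third piece. It produces $\epsilon^{-1}\int (\dive v)^-G^+(T)\mathbf 1_{\{0<q-q_s<\epsilon\}}|\nabla q|^2$, which is not uniform in $\epsilon$. It is, however, acceptable here: the statement only asks for independence of $\eta$, and Theorem \ref{global_existence_thm_1} is stated for fixed $\epsilon$. We first try to exploit its sign. It enters the right-hand side with a minus sign, and $(\dive v)^-\ge0$, $G^+\ge0$, $\mathcal{H}_\epsilon'\ge0$. Hence
\begin{equation*}
-\frac{Hg}{\pi R}\int_{\mathbb{R}^2}(\dive v)^-G^+(T)\,\mathcal{H}_\epsilon'(q-q_s)\,|\nabla q|^2\,dx\le 0 .
\end{equation*}
This term can simply be dropped, so the bound is in fact independent of $\epsilon$ as well. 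If the sign were unavailable, the fallback is the bound $C\epsilon^{-1}\|\nabla v\|_\infty\|\nabla q\|_2^2$, which is still Gronwall-admissible for fixed $\epsilon$.

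Collecting the estimates yields
\begin{equation*}
\frac{d}{dt}\|\nabla q\|_2^2+2\eta\|\Delta q\|_2^2\le C\big(1+\|\nabla u\|_\infty+\|\nabla v\|_\infty\big)\|\nabla q\|_2^2 + C\big(\|\Delta v\|_2^2+\|\nabla T\|_2^2\big).
\end{equation*}
The coefficient $1+\|\nabla u\|_\infty+\|\nabla v\|_\infty$ is integrable on $(0,\mathcal{T})$ with an $\eta$-independent bound by Proposition \ref{grad_infty}. The forcing $\|\Delta v\|_2^2+\|\nabla T\|_2^2$ is integrable with an $\eta$-independent bound by Proposition \ref{prop2}. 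Gronwall's lemma then gives the claimed bound, with $C$ depending on $\|\nabla q_0\|_2$ and on the constants of those propositions. That $C$ is nondecreasing and continuous in $\mathcal{T}$ follows from the explicit form $\exp\big(\int_0^{\mathcal{T}}(\cdots)\big)$ of the Gronwall bound.
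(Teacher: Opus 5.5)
Your proof is correct and follows the paper's argument: multiply by $-\Delta q$, expand the precipitation term by the chain rule, drop the nonnegative $\mathcal{H}_\epsilon'$ contribution by its sign, and close with Gronwall using Propositions \ref{prop2} and \ref{grad_infty}. The only difference is minor: you bound the $\nabla G^+(T)$ piece by $\|\nabla v\|_\infty\|\nabla T\|_2\|\nabla q\|_2$, so your forcing is really $\|\nabla v\|_\infty\|\nabla T\|_2^2$ rather than $\|\nabla T\|_2^2$, which is still integrable since $\sup_t\|\nabla T\|_2\le C$; the paper instead uses Ladyzhenskaya to get $\|\nabla v\|_4^4\|\nabla T\|_2^2+\|\Delta T\|_2^2$.
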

\begin{proof}
We multiply the equation $\eqref{approx}_5$ by $-\Delta q$, integrating by parts we get
\begin{equation*}
\begin{aligned}
    \frac{1}{2}\frac{d}{dt}\|\nabla q\|_2^2 + \eta \|\Delta q\|_2^2  & =  - \int_{\mathbb{R}^2}\nabla u |\nabla q|^2 dx + \bar{Q} \int_{\mathbb{R}^2} \dive v \Delta q dx \\
    & + \frac{Hg}{\pi R} \int_{\mathbb{R}^2}  (\dive v)^- G^+(T)\mathcal{H}_\epsilon(q-q_s) \Delta q \\
    & = - \int_{\mathbb{R}^2}\nabla u |\nabla q|^2 dx - \bar{Q} \int_{\mathbb{R}^2} \nabla \dive v \nabla q dx \\
    & - \frac{Hg}{\pi R} \int_{\mathbb{R}^2} \nabla (\dive v)^- G^+(T)\mathcal{H}_\epsilon(q-q_s) \nabla qdx \\
    & - \frac{Hg}{\pi R} \int_{\mathbb{R}^2} (\dive v)^- G'(T)\cdot \mathbf{1}_{\{G(T) > 0\}} \nabla T \nabla qdx \\
    & - \frac{Hg}{\pi R} \int_{\mathbb{R}^2} (\dive v)^- G^+(T) \mathcal{H}_\epsilon'(q-q_s)|\nabla q|^2dx.
\end{aligned}
\end{equation*}
Thanks to Propositions \ref{1ene}-\ref{grad_infty} and the properties \eqref{prop_H} of $\mathcal{H}_\epsilon$ it holds
\begin{equation*}
\begin{aligned}
    & \frac{1}{2}\frac{d}{dt}\|\nabla q\|_2^2 + \eta \|\Delta q\|_2^2 +  \frac{Hg}{\pi R} \int_{\mathbb{R}^2} (\dive v)^- G^+(T) \mathcal{H}_\epsilon'(q-q_s)|\nabla q|^2dx \\
    & \leq  C(\|\nabla u\|_\infty + 1) \|\nabla q\|_2^2 + C(\|\nabla v\|_4^4\|\nabla T\|_2^2 + \|\Delta v\|_2^2 + \|\Delta T\|_2^2).
\end{aligned}  
\end{equation*}
   Finally by the standard application of Gronwall lemma we obtain
    \begin{equation*}
         \sup_{t \in [0,\mathcal{T})} \|\nabla q\|_2^2  + \eta \int_0^{\mathcal{T}}\|\Delta q \|_2^2dt \leq C,
    \end{equation*}
    where $C:= C(\mathcal{T},\|(u_0,v_0,T_0,q_0)\|_{H^2})$ is a nondecreasing continuous function on \\  $\mathcal{T} \in [0,\infty)$ and it is indipendent of $\eta$.
\end{proof}

At this stage we are able to prove that the temperature $T$ remains bounded almost everywhere in $(x,t)$. This result rules out the occurrence of arbitrarily large temperature values, in agreement with physical observations in tropical regions.

\begin{proposition}\label{max_princ}
 Let $(u,v,T,q)$ be the unique global strong solution to system \eqref{approx}, with initial data $(u_0,v_0,T_{0},q_0) \in H^2(\mathbb{R}^2)$. Then it holds for all $\mathcal{T} \in [0,\infty)$
    \begin{equation*}
    \begin{aligned}
        &  \sup_{t \in [0, \mathcal{T})}\|T\|_{L^\infty(\mathbb{R}^2)} \leq C,
    \end{aligned}
    \end{equation*}
    where $C:= C(\mathcal{T},\|(u_0,v_0,T_{0},q_0)\|_{H^2})$ is a nondecreasing function on $\mathcal{T} \in [0,\infty)$ and it is indipendent on $\eta,\epsilon$.
\end{proposition}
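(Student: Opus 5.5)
Proving an $L^\infty$ bound on $T$ amounts to a De Giorgi / Moser-type iteration, or a Stampacchia truncation argument, for the advection–diffusion equation
\begin{equation*}
\partial_t T + u\cdot\nabla T - \Delta T = f, \qquad f := \frac{H\theta_0 N^2}{\pi g}\dive v + \frac{Hg}{\pi R}(\dive v)^-\mathcal{H}_\epsilon(q-q_s)G^+(T),
\end{equation*}
with divergence-free drift $u$. The key observation is that $|\mathcal{H}_\epsilon|\le 1$ and $0\le G^+\le C_G$ by \eqref{prop_G}, so $|f|\le C|\nabla v|$ pointwise, uniformly in $\epsilon,\eta$. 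Propositions \ref{prop2} and \ref{grad_infty} give bounds on $\nabla v$ that are independent of $\epsilon$ and $\eta$. In particular, $\nabla v\in L^\infty(0,\mathcal{T};L^2)\cap L^2(0,\mathcal{T};H^1)$, hence $\nabla v\in L^4(0,\mathcal{T};L^4)$ by \eqref{grad_4}, and $\nabla v\in L^1(0,\mathcal{T};L^\infty)$. Also $T_0\in H^2(\mathbb{R}^2)\subset L^\infty$.

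The simplest route uses $L^p$ energy estimates with constants uniform in $p$. For $p\ge2$, multiply the equation by $|T|^{p-2}T$. The drift term vanishes since $\dive u=0$, and the diffusion term is nonnegative. This gives
\begin{equation*}
\frac{d}{dt}\|T\|_p \le \|f\|_p \le C\|\nabla v\|_p .
\end{equation*}
Integrating in time, $\|T(t)\|_p\le \|T_0\|_p + C\int_0^t\|\nabla v\|_p\,ds$. By interpolation, $\|\nabla v\|_p\le \|\nabla v\|_2^{2/p}\|\nabla v\|_\infty^{1-2/p}\le \max(\|\nabla v\|_2,\|\nabla v\|_\infty)$, and likewise $\|T_0\|_p\le\max(\|T_0\|_2,\|T_0\|_\infty)$. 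Hence
\begin{equation*}
\|T(t)\|_p \le \max(\|T_0\|_2,\|T_0\|_\infty) + C\int_0^{\mathcal{T}}\big(\|\nabla v\|_2+\|\nabla v\|_\infty\big)\,ds ,
\end{equation*}
and the right-hand side is bounded independently of $p$, $\epsilon$ and $\eta$ by Propositions \ref{prop2} and \ref{grad_infty}. Letting $p\to\infty$ yields $\sup_t\|T\|_\infty\le C$.

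For rigour, I would first justify the computation for $p$ large: the solution of \eqref{approx} lies in $C([0,\infty);H^2)$, so $T\in L^\infty$ and the multiplications are legitimate. I would also handle the derivative of $\|T\|_p$ where it vanishes, by working with $\frac{d}{dt}\|T\|_p^p$ and Hölder instead.

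The main point to check is the availability of $\int_0^\mathcal{T}\|\nabla v\|_\infty\,dt$ uniformly in $\epsilon,\eta$. This is exactly Proposition \ref{grad_infty}, so no De Giorgi machinery is needed. If one preferred to avoid $L^\infty$ of $\nabla v$, a fallback would be De Giorgi iteration with $f\in L^4_{t,x}$, which is admissible in 2D since $\frac{1}{4}+\frac{2}{2\cdot4}<1$; I would not pursue this. The resulting constant depends only on $\mathcal{T}$ and $\|(u_0,v_0,T_0,q_0)\|_{H^2}$, and is nondecreasing in $\mathcal{T}$.
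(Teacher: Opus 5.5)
Your argument is correct, but it takes a different route from the paper's proof.

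\textbf{The paper's route.} The paper does no energy estimates. It writes $T$ through Duhamel's formula with the heat kernel $K$, treating the transport term $u\cdot\nabla T$ as part of the forcing, and bounds three pieces separately:
\begin{itemize}
\item the initial datum, via $\|K(t)*T_0\|_\infty\le\|T_0\|_\infty$;
\item the drift, via Young's convolution inequality $\|K(t-s)*(u\nabla T)\|_\infty\le\|K(t-s)\|_{4/3}\|u\nabla T\|_4\sim(t-s)^{-1/4}\|u\nabla T\|_4$, combined with $u\nabla T\in L^2_tL^4_x$ (which comes from the parabolic bounds of Propositions \ref{1ene}--\ref{prop2});
\item the source, via $\|K\|_1=1$ and the $\epsilon,\eta$-uniform bound $\int_0^{\mathcal{T}}\|\nabla v\|_\infty\,dt\le C$ of Proposition \ref{grad_infty}.
\end{itemize}

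\textbf{Your route.} Your $L^p$ estimate uses $\dive u=0$ to eliminate the drift entirely. You therefore need only $\nabla v\in L^\infty_tL^2_x\cap L^1_tL^\infty_x$ and $T_0\in L^2\cap L^\infty$; no bound on $u\nabla T$ and no singular kernel estimate are required. You also get more than stated. Keep the interpolation $\|\nabla v\|_p\le\|\nabla v\|_2^{2/p}\|\nabla v\|_\infty^{1-2/p}$ instead of bounding it by the maximum, and let $p\to\infty$ by dominated convergence. This gives the explicit maximum-principle-type bound $\|T(t)\|_\infty\le\|T_0\|_\infty+C\int_0^t\|\nabla v\|_\infty\,ds$, in which neither $u$ nor $\nabla T$ appears.

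\textbf{Comparison.} The paper's Duhamel argument never uses incompressibility, so it would survive a non-solenoidal drift. It pays for that with the extra regularity $u\nabla T\in L^2_tL^4_x$. Your argument is more elementary and needs less regularity, but relies on the divergence-free structure. Both hinge on the same key input: the $\epsilon,\eta$-uniform control of $\int_0^{\mathcal{T}}\|\nabla v\|_\infty\,dt$.
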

\begin{proof}
We recall that the equation satisfied by $T$ is given by
\begin{equation*}
    \partial_t T + u \nabla T - \Delta T - \frac{H}{\pi}\frac{\theta_0 N^2}{g}  \dive v = \frac{H g}{\pi R}(\dive v)^- \mathcal{H}_\epsilon(q-q_s) G^+(T),
\end{equation*}
therefore by the standard application of the Duhamel principle we know that the solution can be expressed in the following way
\begin{equation}
    \begin{aligned}
        & T(x,t)  = \int_{\mathbb{R}^2}K(t,x-y)T_0(y)dy - \int_0^t\!\!\int_{\mathbb{R}^2}\!\!K(t-s,x-y)u\nabla Tdyds\\
        & + \int_0^t\!\!\int_{\mathbb{R}^2}\!\!K(t-s,x-y)\left(\frac{H}{\pi}\frac{\theta_0 N^2}{g}  \dive v + \frac{H g}{\pi R}(\dive v)^-  \mathcal{H}_\epsilon(q-q_s) G^+(T) \right)(s,y)dyds,
    \end{aligned}
\end{equation}
where the convolution kernel in $\mathbb{R}^2$ is given by
\begin{equation*}
    K(x,t) = \frac{1}{4\pi t}e^{-\frac{|x|^2}{4t}}.
\end{equation*}
Now we consider the absolute value of $T$, 
\begin{equation*}
    \begin{aligned}
        & |T(x,t)|  \leq \int_{\mathbb{R}^2}|K(t,x-y)T_0(y)|dy +  \int_0^t\!\!\int_{\mathbb{R}^2}\left| K(t-s,x-y) u\nabla T \right|dyds\\
        & + \int_0^t\!\!\int_{\mathbb{R}^2}\left| K(t-s,x-y)\left( \frac{H}{\pi}\frac{\theta_0 N^2}{g}  \dive v + \frac{H g}{\pi R}(\dive v)^- \mathcal{H}_\epsilon(q-q_s) G^+(T) \right)(s,y)\right|dyds \\
        & \; = I_1 + I_2+ I_3.
    \end{aligned}
\end{equation*}
Regarding  $I_1$ we have
\begin{equation*}
    |I_1| \leq \|T_0\|_{\infty}.
\end{equation*}
For the other terms $I_2$ and $I_3$ we apply Young’s inequality for convolutions. Recall that, for any function \( F(x,t) \in L^4(\mathbb{R}^2) \), the following estimate holds:
\begin{equation*}
    \|[K(t-s) * F(s)](x)\|_\infty \leq \|K(t-s)\|_{4/3} \|F(s)\|_4.
\end{equation*}
Moreover, from Propositions \ref{1ene} - \ref{h1_q},  \eqref{prop_G} and \eqref{prop_H} we know that
\begin{equation*}
    \begin{aligned}
        & (T,u,v) \in L^{\infty}((0,\mathcal{T});H^1(\mathbb{R}^2))\cap L^2((0,\mathcal{T});H^2(\mathbb{R}^2)), \\
        & \nabla v \in L^1((0,\mathcal{T}),L^\infty(\mathbb{R}^2)),\\
        & G^+(T) \leq C_G,\\
        &|\mathcal{H}_\epsilon(q-q_s)| \leq 1
    \end{aligned}
\end{equation*}
and thus, it holds 
\begin{equation*}
    \begin{aligned}
        & \|u \nabla T\|_{L^2((0,\mathcal{T});L^4(\mathbb{R}^2))} \leq C(\mathcal{T}, \|(u_0,v_0,T_0,q_0)\|_{H^2(\mathbb{R}^2)}) \\
        & \|\dive v\|_{L^1((0,\mathcal{T}),L^\infty(\mathbb{R}^2))}\leq C(\mathcal{T}, \|(u_0,v_0,T_0,q_0)\|_{H^2(\mathbb{R}^2)}).
    \end{aligned}
\end{equation*}
One can control $I_2$ in the following way
\begin{equation*}
    \begin{aligned}
         \|[K(t-s) * (u \nabla T)(s)](x)\|_\infty & \leq  \|K(t-s)\|_{4/3} \|(u \nabla T)(s)\|_4, \\
         & \sim \frac{1}{(t-s)^{\frac{1}{4}}} \|(u \nabla T)(s)\|_4,
    \end{aligned}
\end{equation*}
thus
\begin{equation*}
\begin{aligned}
     \int_0^t \frac{1}{(t-s)^{\frac{1}{4}}} \|(u \nabla T)(s)\|_4 ds & \leq C  \left( \int_0^t \frac{1}{(t-s)^{\frac{1}{2}}}ds \right)^{\frac{1}{2}} \cdot \left( \int_0^t \|(u \nabla T)(s)\|_4^2 ds ds \right)^{\frac{1}{2}} \\
     & \leq C t^{\frac{1}{4}}.
\end{aligned}
\end{equation*}
The other terms can be treated similarly and in particular,
\begin{equation*}
    \begin{aligned}
        & \|[K(t-s) * (\dive v)(s)](x)\|_\infty \leq \|K(t-s)\|_{1} \|\dive v (s)\|_\infty \sim \|\nabla v (s)\|_\infty.
    \end{aligned}
\end{equation*}
therefore
\begin{equation*}
    \begin{aligned}
        \int_0^t \|[K(t-s) * (\dive v)(s)](x)\|_\infty ds \leq \int_0^t \|\nabla v\|_\infty ds \leq C(t, \|(u_0,v_0,T_0,q_0)\|_{H^1}).
    \end{aligned}
\end{equation*}
Finally, by taking the supremum over $\mathbb{R}^2 \times [0,\mathcal{T})$ we deduce

\begin{equation*}
\begin{aligned}
& \sup_{t \in [0,\mathcal{T})} \sup_{x \in \mathbb{R}^2} |T(x,t)| 
\leq \sup_{t \in [0,\mathcal{T})} \|T_0\|_{\infty} \\
&+ \sup_{t \in [0,\mathcal{T})} \int_0^t \sup_{x \in \mathbb{R}^2} \Bigg| 
          \int_{\mathbb{R}^2} K(t-s,x-y) ( u \nabla T)(s,y) dy \Bigg|ds\\
& \quad + \sup_{t \in [0,\mathcal{T})} \int_0^t 
      \sup_{x \in \mathbb{R}^2} \Bigg| 
          \int_{\mathbb{R}^2} K(t-s,x-y) \\
& \hspace{2em} \times \Big(\frac{H}{\pi} \frac{\theta_0 N^2}{g} \dive v 
           + \frac{H g}{\pi R} (\dive v)^- \mathcal{H}_\epsilon(q-q_s) G^+(T) \Bigg)(s,y) \, dy 
      \Bigg| \, ds \\
& \leq \|T_0\|_{\infty} 
      +  C \int_0^{\mathcal{T}}
        \Big\| K(t-s) * \Big(u \nabla T + \dive v \Big)(s) \Big\|_{\infty} \, ds \\
& \leq C( 1 + \|T_0\|_{\infty} +  \mathcal{T}^{\frac{1}{4}}) \leq C\Big(\mathcal{T}, \|(u_0,v_0,T_0,q_0)\|_{H^2(\mathbb{R}^2)}\Big).
\end{aligned}
\end{equation*}
The proof is concluded.
\end{proof}
\begin{remark}
In contrast to \cite{CZ}, we are not able to prove that
\( 0 \leq T \leq M \) for all \( (x,t) \in \mathbb{R}^2 \times [0,\mathcal{T}] \). 
More precisely, even by following the arguments developed in \cite{CZ2}, 
we can not show that the negative part of \( T \), denoted by \( T^- \), 
vanishes almost everywhere in \( \mathbb{R}^2 \times [0,\mathcal{T}] \). 
This difficulty arises from the fact that, in the present analysis, we  consider an 
unbounded spatial domain, and that the model involves 
\( G(T)^+ \) rather than \( F(T)^+ \), as in the previous works \cite{CZ, CZ2}.
\end{remark}

On the basis of Propositions \ref{1ene}–\ref{max_princ}, we can summarize the following a priori estimates, which are independent of $\eta > 0$.

\begin{corollary}\label{a_priori_estimate_eta}
    Let $(u,v,T,q)$ be the unique global strong solution to system \eqref{approx}, with initial data $(u_0,v_0,T_{0},q_0) \in H^2(\mathbb{R}^2)$ and consider $\eta \in (0, 1)$ . Then we have for all $\mathcal{T} \in [0,\infty)$
    \begin{equation*}
    \begin{aligned}
        &  \sup_{t \in [0, \mathcal{T}]}\left( \|(u,v,T, q)\|_{H^1}^2 + \|T\|_\infty \right) + \int_0^\mathcal{T} (\|(\Delta u, \Delta v, \Delta T, \sqrt{\eta}\Delta q\|_2^2 + \|\nabla u\|_{\infty} + \|\nabla v\|_{\infty})dt  \\
        & + \int_0^\mathcal{T} \|\partial_t u, \partial_t v, \partial_t T, \partial_t q)\|_2^2dt  \leq C,
    \end{aligned}
    \end{equation*}
    where $C:= C(\mathcal{T},\|(u_0,v_0,T_{0},q_0)\|_{H^2})$ is a nondecreasing function on $\mathcal{T} \in [0,\infty)$ and it is indipendent on $\eta$.
\end{corollary}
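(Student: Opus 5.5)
The corollary gathers estimates already proved, plus bounds on the time derivatives. I would proceed in three steps.

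First, the $H^1$ and dissipation terms. Proposition \ref{1ene} gives the $L^2$ bounds together with $\int_0^{\mathcal T}\|\sqrt\eta\nabla q\|_2^2\,dt$. Proposition \ref{prop2} gives $\sup_t\|(\nabla u,\nabla v,\nabla T)\|_2^2$ and $\int_0^{\mathcal T}\|(\Delta u,\Delta v,\Delta T)\|_2^2\,dt$. Proposition \ref{grad_infty} bounds $\int_0^{\mathcal T}(\|\nabla u\|_\infty+\|\nabla v\|_\infty)\,dt$. Proposition \ref{h1_q} yields $\sup_t\|\nabla q\|_2^2+\eta\int_0^{\mathcal T}\|\Delta q\|_2^2\,dt$. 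Proposition \ref{max_princ} gives $\sup_t\|T\|_\infty$. All of these constants depend only on $\mathcal T$ and $\|(u_0,v_0,T_0,q_0)\|_{H^2}$, not on $\eta$. One small point: Proposition \ref{max_princ} uses $\|T_0\|_\infty\le C\|T_0\|_{H^2}$, which is valid in two dimensions.

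Second, the time derivatives, obtained by reading them off the equations in \eqref{approx}. For $v$,
\[
\|\partial_t v\|_2\le \|u\|_4\|\nabla v\|_4+\|v\|_4\|\nabla u\|_4+\|\Delta v\|_2+C\|\nabla T\|_2 .
\]
By Ladyzhenskaya's inequality, $\|u\|_4^2\|\nabla v\|_4^2\le C\|u\|_2\|\nabla u\|_2\|\nabla v\|_2\|\Delta v\|_2$. This is bounded in $L^1(0,\mathcal T)$ using the $L^\infty_tH^1$ and $L^2_t\dot H^2$ bounds from the first step. For $u$, I apply the Leray projector $\mathbb P$, which is bounded on $L^2$, and write $\partial_t u=\mathbb P(\Delta u-u\cdot\nabla u-\operatorname{div}(v\otimes v))$. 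The same estimates then apply. For $T$, I bound $u\nabla T$ the same way. The remaining source terms are handled by $\|\operatorname{div}v\|_2\le\|\nabla v\|_2$ and by $|(\operatorname{div}v)^-\mathcal H_\epsilon G^+(T)|\le C_G|\nabla v|$, which follows from \eqref{prop_G} and \eqref{prop_H}.

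Third, the derivative $\partial_t q=-u\nabla q-\bar Q\operatorname{div}v-\frac{Hg}{\pi R}(\operatorname{div}v)^-\mathcal H_\epsilon G^+(T)+\eta\Delta q$. This is the only delicate term. Since $q$ has only $L^\infty_tH^1$ regularity uniformly in $\eta$, I use $\|u\nabla q\|_2\le\|u\|_\infty\|\nabla q\|_2$ together with $\|u\|_\infty^2\le C\|u\|_2\|\Delta u\|_2$; the right-hand side is in $L^1_t$, so $u\nabla q\in L^2_tL^2_x$. The viscous term gives $\|\eta\Delta q\|_2^2=\eta\,\|\sqrt\eta\Delta q\|_2^2$. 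Its time integral is bounded by $\eta$ times the bound of Proposition \ref{h1_q}, hence uniformly for $\eta\in(0,1)$; this is exactly where the restriction $\eta<1$ enters. The other terms are bounded by $C\|\nabla v\|_2$. Summing all these contributions gives the claimed estimate.
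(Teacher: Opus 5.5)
Your proposal is correct and follows essentially the same route as the paper. The non-time-derivative terms are collected from Propositions \ref{1ene}--\ref{max_princ}, and $\partial_t q$ is read off the equation using $\|u\nabla q\|_2\le\|u\|_\infty\|\nabla q\|_2$, $\|u\|_\infty^2\le C\|u\|_2\|\Delta u\|_2$ and $\|\eta\Delta q\|_2^2=\eta\|\sqrt{\eta}\Delta q\|_2^2$ with $\eta<1$. The paper only says the same argument applies to $(u,v,T)$; your explicit Ladyzhenskaya bounds and the Leray projector that removes the pressure in the $u$-equation fill in exactly that step.
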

\begin{proof}
It remains to verify only the part involving the time derivatives. Concerning the moisture equation $\eqref{approx}_5$, we have 
    \begin{equation*}
    \begin{aligned}
        \int_0^\mathcal{T} \|\partial_t q\|_2^2 dt & \leq \int_0^\mathcal{T} (\|u \nabla q\|_2^2 + \eta\|\sqrt{\eta} \Delta q\|_2^2 + (\bar{Q}^2 + C_G^2)\|\dive v\|_2^2) dt \\
        &\leq C \sup_{t \in [0,\mathcal{T})} \|\nabla q\|_2^2 \int_0^\mathcal{T} \|u\|_2 \|\Delta u\|_2 dt + C(\eta + 1) \\
        & \leq C \left(\int_0^\mathcal{T} \|u\|_2^2\right)^{1/2} \left( \int_0^\mathcal{T} \|\Delta u\|_2^2 \right)^{1/2} + 2C \leq \tilde{C},
    \end{aligned}
    \end{equation*}
    where $\tilde{C}:= \tilde{C}(\|(u_0,v_0,T_{0},q_0)\|_{H^2}, \mathcal{T})$.
One can repeat the same argument for $(u,v,T)$ to conclude the proof.
\end{proof}

\subsection{Proof of Theorem \ref{global_existence_thm_1}}\label{subsec_4.1}

As a final step we perform the limit as $\eta \rightarrow 0$ in \eqref{approx}. We consider a sequence of initial data $(u_{0,\eta}, v_{0,\eta}, T_{0,\eta}, q_{0,\eta}) \in H^2(\mathbb{R}^2)$ with $\nabla \cdot u_{0,\eta} = 0$, such that,

\begin{equation*}
    \begin{aligned}
        & (u_{0,\eta},v_{0,\eta}, T_{0,\eta}, q_{0,\eta}) \rightarrow (u_{0},v_{0}, T_{0}, q_{0}) \; \text{ in } H^1(\mathbb{R}^2) \text{ as } \eta \rightarrow 0, \\
        &\|(u_{0,\eta},v_{0,\eta}, T_{0,\eta}, q_{0,\eta})\|_{H^1} \leq \|(u_{0},v_{0}, T_{0}, q_{0})\|_{H^1},\\
        &\|T_{0,\eta}\|_\infty \leq \|T_0\|_\infty.
    \end{aligned}
\end{equation*}
By virtue of Proposition \ref{global_existence_fixed_eta} and Corollary \ref{a_priori_estimate_eta}, for each $\eta \in (0,1)$ and initial data  $(u_{0,\eta}, v_{0,\eta}, T_{0,\eta}, q_{0,\eta})$, we know there exists a unique global strong solution $(u_\eta, v_\eta, T_\eta, q_\eta)$ to the system \eqref{approx} satisfying,

\begin{equation*}
\begin{aligned}
    & \sup_{t \in [0,\mathcal{T}]} ( \|(u_\eta,v_\eta,T_{\eta},q_\eta)\|^2_{H^1} + \|T_{\eta}\|_\infty)  \\
    & + \int_0^\mathcal{T}\|(\Delta u_\eta, \Delta v_\eta, \Delta T_{\eta}, \sqrt{\eta}\Delta q_\eta\|_2^2 + \|\nabla u_\eta \|_{\infty} + \|\nabla v_\eta \|_{\infty}dt \\
    & + \int_0^\mathcal{T} \| \partial_t u_\eta, \partial_t v_\eta, \partial_t T_{\eta}, \partial_t q_\eta)\|_2^2 \; dt \leq C, \\
\end{aligned}
\end{equation*}
for a positive constant $C$ independent of $\eta$.
Thanks to the above estimates, we can extract a subsequence still depends on $\mathcal{T}$, denoted by $(u_\eta, v_\eta, T_{\eta},q_\eta)$ such that
\begin{equation}\label{conv_1}
    \begin{aligned}
        & (u_\eta, v_\eta, T_{\eta},q_\eta) \overset{\ast}{\rightharpoonup} (u,v,T,q) \; \text{ in } L^{\infty}((0,\mathcal{T});H^1(\mathbb{R}^2)); \\
        & (\Delta T_{\eta}, \sqrt{\eta}\Delta q_\eta) \overset{}{\rightharpoonup} (\Delta T,g) \; \text{ in } L^{2}((0,\mathcal{T});L^2(\mathbb{R}^2));\\
        & (u_\eta, v_\eta) \overset{}{\rightharpoonup} (u,v) \; \text{ in } L^{2}((0,\mathcal{T});H^2(\mathbb{R}^2));\\
        &(\partial_t u_\eta, \partial_tv_\eta, \partial_tT_{\eta},\partial_tq_\eta) \overset{}{\rightharpoonup} (\partial_t u, \partial_tv, \partial_tT,\partial_tq) \; \text{ in } L^{2}((0,\mathcal{T});L^2(\mathbb{R}^2)).\\
    \end{aligned}
\end{equation}
By the Aubin–Lions lemma (Corollary 4, \cite{simons}), for every positive integer $k$, there exists a subsequence $(u_\eta, v_\eta, T_\eta, q_\eta)$ such that:

\begin{equation}\label{reg_sol_eta}
    \begin{aligned}
        &(u_{\eta},v_{\eta}, T_{\eta}) \rightarrow (u,v,T) \text{ in } C([0,\mathcal{T}];H^1(B_k)) , \\
        & q_{\eta} \rightarrow q \text{ in } C([0,\mathcal{T}];L^2(B_k));
    \end{aligned}
\end{equation}
where $B_k$ is the disk of radius $k$. By applying the Cantor diagonal argument first with respect to $\eta$ and $k$, and subsequently with respect to $\eta$ and $\mathcal{T}$, one can show that the above convergence still holds for every $R>0$ and $\mathcal{T}>0$. Owing to the continuity and boundedness of $G(T)$ and $H_\epsilon(r)$ for each fixed $\epsilon > 0$ (see \eqref{prop_G} and \eqref{prop_H}), and to the strong convergence of $(u_\eta, v_\eta, T_\eta, q_\eta)$ established above, we conclude that, up to subsequences:
\begin{equation*}
\begin{aligned}
    & H_\epsilon( q_\eta - q_s) \rightarrow H_\epsilon(q-q_s) \qquad \text{a.e in } \mathbb{R}^2 \times (0, \mathcal{T}), \\
    &  \left(\frac{F(T_\eta)}{T_\eta}\right)^+ = G^+(T_\eta) \rightarrow G^+(T)= \left(  \frac{F(T)}{T} \right)^+  \qquad \text{a.e in } \mathbb{R}^2 \times (0, \mathcal{T}), \\
    & \nabla u_\eta \rightarrow \nabla u \qquad \text{a.e in } \mathbb{R}^2 \times (0, \mathcal{T}),\\
    &\nabla v_\eta  \rightarrow \nabla v \qquad \text{a.e in } \mathbb{R}^2 \times (0, \mathcal{T}),\\
    &T_\eta \rightarrow T \qquad \text{a.e in } \mathbb{R}^2 \times (0, \mathcal{T}).
\end{aligned}
\end{equation*}
Therefore, the limit solution satisfies the equations \eqref{approx_epsilon} for almost every $(x,t)$. Moreover, by the Fatou lemma, we deduce that $\nabla u, \nabla v \in L^1_t L^\infty_x$. Finally, thanks to the weak lower semicontinuity of the norm and \eqref{conv_1}, we conclude that the limit solution satisfies the energy inequality \eqref{ene_uni_epsilon} for every $\epsilon > 0$,
\begin{equation*}
     \begin{aligned}
        \sup_{t \in [0,\mathcal{T})}\left( \|(u,v,T, q)\|_{H^1}^2 + \|T\|_\infty \right) & + \int_0^\mathcal{T} \|(\Delta u, \Delta v, \Delta T, \partial_t u, \partial_t v, \partial_t T, \partial_t q)\|_2^2 dt\\
        & + \int_0^\mathcal{T} \|\nabla u\|_{\infty} + \|\nabla v\|_{\infty}dt  \leq C,
    \end{aligned}
\end{equation*}
 where $C:= C(\mathcal{T}, \|(u_0,v_0,T_{0},q_0)\|_{H^1})$ is a nondecreasing function on $ \mathcal{T} \in [0,\infty)$ and it is indipendent on $\epsilon$.

\subsection{Uniqueness of strong solution for system \eqref{approx_epsilon}}\label{unique_1}
The proof of uniqueness of strong solutions for the system \eqref{approx_epsilon} is standard. Let
$$
(\delta u, \delta v, \delta T, \delta q) := (u_1, v_1, T_1, q_1) - (u_2, v_2, T_2, q_2)
$$
denote the difference between two strong solutions with the same initial data. By performing the classical $L^2$-type energy estimates and subsequently applying a logarithmic Gronwall inequality, the desired result follows. The equations satisfied by $(\delta u, \delta v, \delta T, \delta q)$ are given by

\begin{equation*}
\left\{
    \begin{aligned}
     &\partial_t \delta u + u_1 \nabla \delta u + \delta u \nabla u_2 - \Delta \delta u + \nabla \delta p + \dive (v_1 \otimes \delta v) + \dive (\delta v \otimes v_2) = 0 \\
    &\partial_t \delta v + u_1 \nabla \delta v + \delta u \nabla v_2 - \Delta \delta v + v_1 \nabla \delta u + \delta v \nabla u_2 = \frac{H}{\pi} \frac{g}{\theta_0} \nabla \delta T \\
    &\dive \delta u  = 0 \\
    &\partial_t \delta T + u_1 \nabla \delta T + \delta u \nabla T_{2} - \Delta \delta T - \frac{H}{\pi}\frac{N^2 \theta_0}{g} \dive  \delta v  = \frac{H g}{\pi R}(\dive v_1)^- \mathcal{H}_\epsilon(q_1-q_s) G(T_1)^+  \\
    & \hspace{8.5cm}- \frac{H g}{\pi R} (\dive v_2)^- \mathcal{H}_\epsilon(q_2-q_s) G(T_2)^+ \\
    &\partial_t \delta q + u_1 \nabla \delta q + \delta u \nabla q_2 + \bar{Q} \dive \delta v  = -  \frac{H g}{\pi R}(\dive v_1)^- \mathcal{H}_\epsilon(q_1-q_s)G(T_1)^+ \\
    & \hspace{6cm}+ \frac{H g}{\pi R} (\dive v_2)^- \mathcal{H}_\epsilon(q_2-q_s) G(T_2)^+. 
\end{aligned}\right.
\end{equation*}
We multiply the first two equations by $(\delta u, \delta v)$ and integrate over $\mathbb{R}^2$. After some rearrangements we get,
\begin{equation}\label{pt_1_1}
    \begin{aligned}
        & \frac{1}{2} \frac{d}{dt} \left( \|\delta u\|_2^2 + \|\delta v\|_2^2  \right) + \frac{1}{2}\|\nabla \delta u\|_2^2 + \frac{3}{4}\|\nabla \delta v\|_2^2 \\
        & \leq C \int_{\mathbb{R}^2}[(|\nabla u_2|+ |\nabla v_2| + |\nabla v_1| + |v_1|^2 + |v_2|^2] (|\delta u|^2 + |\delta v|^2)  dx + C \|\delta T\|_2^2.
    \end{aligned}
\end{equation}
Concerning the temperature equation we have,
\begin{equation*}
    \begin{aligned}
        & \frac{1}{2}\frac{d}{dt} \|\delta T\|_2^2+ \|\nabla \delta T\|_2^2 = - \int_{\mathbb{R}^2} \delta u \nabla T_2 \delta T dx + \frac{H}{\pi}\frac{N^2 \theta_0}{g}\int_{\mathbb{R}^2} \dive \delta v \delta T dx \\
        & + \int_{\mathbb{R}^2} \left[ \frac{H g}{\pi R}(\dive v_1)^-\mathcal{H}_\epsilon(q_1-q_s) G^+(T_1) - \frac{H g}{\pi R}(\dive v_2)^- \mathcal{H}_\epsilon(q_2-q_s) G^+(T_2) \right] \delta qdx \\
        & := I_1' +I_2'.
    \end{aligned}
\end{equation*}
We control $I_1'$ in the following way,
\begin{equation*}
    |I_1| \leq C \int_{\mathbb{R}^2} [|\delta u| |\nabla T_2| + |\nabla \delta v|]|\delta T | dx \leq \frac{1}{16} \|\nabla \delta v\|_2^2 + C\|\delta T\|_2^2 + \|\delta u\|_\infty \|\nabla T_2\|_2 \|\delta T\|_2.
\end{equation*}
Regarding $I_2'$ we have
\begin{equation*}
    \begin{aligned}
        |I_2'| &= \left|\int_{\mathbb{R}^2}(\dive v_1)^- - (\dive v_2)^-)G^+(T_1) \mathcal{H_\epsilon}(q_1-q_s) \delta T dx\right| \\
        & + \left|\int_{\mathbb{R}^2}(\dive v_2)^- (G^+(T_1)-G^+(T_2))\mathcal{H}_\epsilon(q_1-q_s) \delta T dx\right| \\
        & + \left|\int_{\mathbb{R}^2} (\dive v_2)^- G^+(T_2)(\mathcal{H}_\epsilon(q_1 -q_s) - \mathcal{H}_\epsilon(q_2 -q_s))\delta T\right|\\
        & \leq \frac{1}{16} \|\nabla \delta v\|_2^2 + C( 1 + \|\nabla v_2\|_2^2)\|\delta T\|_2^2 + \frac{1}{4}\|\nabla \delta T\|_2^2 + + C\frac{1}{\epsilon}\int_{\mathbb{R}^2}|\delta q\|\delta T|dx \\
        & \leq \frac{1}{16} \|\nabla \delta v\|_2^2 + C_\epsilon( 1 + \|\nabla v_2\|_2^2)\|(\delta T, \delta q)\|_2^2 + \frac{1}{4}\|\nabla \delta T\|_2^2 ,
    \end{aligned}
\end{equation*}
where we have used the properties \eqref{prop_H} of $\mathcal{H_\epsilon}$. We can conclude that
\begin{equation}\label{pt_1_2}
    \begin{aligned}
        \frac{1}{2}\frac{d}{dt} \|\delta T\|_2^2+ \frac{3}{4}\|\nabla \delta T\|_2^2 & \leq \frac{1}{8} \|\nabla \delta v\|_2^2 + C_\epsilon( 1 + \|\nabla v_2\|_2^2)\|(\delta T, \delta q)\|_2^2 \\
        & \; + \|\delta u\|_\infty \|\nabla T_2\|_2 \|\delta T\|_2.
    \end{aligned}
\end{equation}
Finally, we multiply by $\delta q$ and integrate over $\mathbb{R}^2$ to obtain
\begin{equation}\label{est_q}
    \begin{aligned}
        & \frac{1}{2}\frac{d}{dt} \|\delta q\|_2^2 = - \int_{\mathbb{R}^2}  (u_1 \nabla \delta q + \delta u \nabla q_2 + \bar{Q} \dive \delta v) \delta q dx \\
        & - \int_{\mathbb{R}^2} \left[ \frac{H g}{\pi R}(\dive v_1)^-\mathcal{H}_\epsilon(q_1-q_s) G^+(T_1) - \frac{H g}{\pi R}(\dive v_2)^- \mathcal{H}_\epsilon(q_2-q_s) G^+(T_2) \right] \delta qdx \\
        & := I_1+ I_2
    \end{aligned}
\end{equation}
Concerning $I_1$, as before we have
\begin{equation*}
    \begin{aligned}
        |I_1| & \leq \|\delta u\|_\infty \|\nabla q_2\|_2\|\delta q\|_2 + \frac{1}{16} \|\nabla \delta v\|_2^2 + \|\delta q\|_2^2,
    \end{aligned}
\end{equation*}
while for $I_2$ we get
\begin{equation*}
    \begin{aligned}
        I_2 & = - \int_{\mathbb{R}^2} [(\dive v_1)^- -(\dive v_2)^-]G^+(T_1) \mathcal{H}_\epsilon(q_1 -q_s) \delta qdx \\
        & -  \int_{\mathbb{R}^2} (\dive v_2)^-(G^+(T_1)-G^+(T_2))\mathcal{H}_\epsilon(q_2 -q_s)\delta q dx \\
        &- \int_{\mathbb{R}^2} (\dive v_2)^- G^+(T_1)(\mathcal{H}_\epsilon(q_1 -q_s) - \mathcal{H}_\epsilon(q_2 -q_s))\delta q dx,
    \end{aligned}
\end{equation*}
we recall that $(\mathcal{H}_\epsilon(q_1 -q_s) - \mathcal{H}_\epsilon(q_2 -q_s))\delta q \geq 0$ thanks to the monotonicity properties \eqref{prop_H}. By using  Holder, Ladyzhenskaya and Young inequalities we get
\begin{equation}\label{pt_1_3}
    \begin{aligned}
        \frac{1}{2}\frac{d}{dt} \|\delta q\|_2^2 & + \int_{\mathbb{R}^2} (\dive v_2)^- G^+(T_1)(\mathcal{H}_\epsilon(q_1 -q_s) - \mathcal{H}_\epsilon(q_2 -q_s))\delta q dx  \leq \|\delta u\|_\infty \|\nabla q_2\|_2\|\delta q\|_2 \\
        & + \frac{1}{8} \|\nabla \delta v\|_2^2 + C( \|\nabla v_2\|_4^4 \|\delta T\|_2^2 + \|\delta q\|_2^2 ) + \frac{1}{4} \|\nabla \delta T\|_2^2.
    \end{aligned}
\end{equation}
Combining \eqref{pt_1_1}, \eqref{pt_1_2} and \eqref{pt_1_3} we obtain
\begin{equation*}
    \begin{aligned}
        & \frac{d}{dt} \left( \|\delta u\|_2^2 + \|\delta v\|_2^2 + \|\delta T\|_2^2 + \|\delta q\|_2^2 \right) + \frac{1}{2} (\|\nabla \delta u\|_2^2 + \|\nabla \delta v\|_2^2 + \|\nabla \delta T\|_2^2) \\
        & \leq C \int_{\mathbb{R}^2}[(|\nabla u_2|+ |\nabla v_2| + |\nabla v_1| + |v_1|^2 + |v_2|^2] (|\delta u|^2 + |\delta v|^2)  dx \\
       &+ C_\epsilon(1 + \|\nabla v_2\|_4^4 + \|\nabla v_2\|_2^2)\|(\delta T,\delta q)\|_2^2 + \|\delta u\|_\infty \|(\nabla T_2, \nabla q_2)\|_2\|(\delta T,\delta q)\|_2.
    \end{aligned}
\end{equation*}
By using once again Holder, Ladyzhenskaya and Young inequalities we deduce
\begin{equation*}
    \begin{aligned}
        & \frac{d}{dt} \left( \|\delta u\|_2^2 + \|\delta v\|_2^2 + \|\delta T\|_2^2 + \|\delta q\|_2^2 \right) + \frac{1}{2} (\|\nabla \delta u\|_2^2 + \|\nabla \delta v\|_2^2 + \|\nabla \delta T\|_2^2) \\
        & \leq C \left[\|(|\nabla u_2|, |\nabla v_2|,|\nabla v_1|)\|_2  + \|(v_1,v_2)\|_4^2 \right]\|(\delta u, \delta v)\|_4^2 \\
        &+ C_\epsilon(1 + \|\nabla v_2\|_4^4 + \|\nabla v_2\|_2^2)\|(\delta T,\delta q)\|_2^2 + \|\delta u\|_\infty \|(\nabla T_2, \nabla q_2)\|_2\|(\delta T,\delta q)\|_2\\
        & \leq C \left[\|(|\nabla u_2|, |\nabla v_2|,|\nabla v_1|)\|_2  + \|(v_1,v_2)\|_4^2 \right] \|(\delta u, \delta v)\|_2 \|(\nabla \delta u, \nabla \delta v)\|_2  \\
        &+ C_\epsilon(1 + \|\nabla v_2\|_4^4 + \|\nabla v_2\|_2^2)\|(\delta T,\delta q)\|_2^2+ \|\delta u\|_\infty \|(\nabla T_2, \nabla q_2)\|_2\|(\delta T,\delta q)\|_2 \\
        & \leq \frac{1}{4}\|(\nabla \delta u, \nabla \delta v)\|_2^2 + C \|(\nabla T_{2}, \nabla q_2)\|_2 \|\delta u, \delta v, \delta T \|_{L^{\infty}} \|(\delta T,\delta q)\|_2 \\
        & + C_\epsilon \left[ 1+ \|(|\nabla u_2|, |\nabla v_2|,|\nabla v_1|)\|_2^2 +  \|(v_1,v_2)\|_4^4 + \|\nabla v_2\|_4^4 \right] \|(\delta u, \delta v, \delta T,\delta q)\|_2^2.
    \end{aligned}
\end{equation*}
Therefore, one has
\begin{equation*}
    \begin{aligned}
        & \frac{d}{dt} \left( \|\delta u\|_2^2 + \|\delta v\|_2^2 + \|\delta T\|_2^2 + \|\delta q\|_2^2 \right) + \frac{1}{4} (\|\nabla \delta u\|_2^2 + \|\nabla \delta v\|_2^2 + \|\nabla \delta T\|_2^2) \\
        & \leq C_\epsilon( 1 +  \|(v_1,v_2)\|_4^4 + \|(|\nabla u_2|, |\nabla v_2|,|\nabla v_1|)\|_2^2 + \|\nabla v_2\|_4^4) \|(\delta u, \delta v, \delta T, \delta q)\|_2^2 \\
        &+  C \|(\nabla T_{2}, \nabla q_2)\|_2 \|(\delta u, \delta v,\delta T)\|_{L^{\infty}} \|(\delta T,\delta q)\|_2.
    \end{aligned}
\end{equation*}
Now, recalling the Brezis-Gallouet-Wainger inequality 
\begin{equation*}
    \|f\|_{L^{\infty}(\mathbb{R}^2)} \leq C \|f\|_{H^1(\mathbb{R}^2)}\log^{\frac{1}{2}}\left( \frac{\|f\|_{H^2(\mathbb{R}^2)}}{\|f\|_{H^1(\mathbb{R}^2)}} + e\right),
\end{equation*}
and denoting $U_1=(u_1,v_1,T_1)$, $U_2=(u_2,v_2,T_2)$ and $\delta U = (\delta u, \delta v,\delta T)$ we have:
\begin{equation*}
\begin{aligned}
    \|\delta U\|_{L^{\infty}(\mathbb{R}^2)} & \leq C \|\delta U\|_{H^1(\mathbb{R}^2)}\log^{\frac{1}{2}}\left( \frac{\|\delta U\|_{H^2(\mathbb{R}^2)}}{\|\delta U\|_{H^1(\mathbb{R}^2)}} + e\right) \\
    & \leq  C \|\delta U\|_{H^1(\mathbb{R}^2)}\log^{\frac{1}{2}}\left( \frac{S(t)}{\|\delta U\|_{H^1(\mathbb{R}^2)}} \right)\\
    & = C \left[ \|\delta U\|^2_{H^1(\mathbb{R}^2)} \log^{+}\frac{S(t)}{\|\delta U\|_{H^1(\mathbb{R}^2)}} \right]^{\frac{1}{2}},
\end{aligned}
\end{equation*}
where $$S(t) := \|U_1\|_{H^2(\mathbb{R}^2)} + \|U_2\|_{H^2(\mathbb{R}^2)} + e\left(\|U_1\|_{H^1(\mathbb{R}^2)}+ \|U_2\|_{H^1(\mathbb{R}^2)}\right).$$
Let us define
\begin{equation*}
    \begin{aligned}
        & f = \|\delta u\|_2^2 + \|\delta v\|_2^2 + \|\delta T\|_2^2 + \|\delta q\|_2^2, \\
        & G= \frac{1}{4}\left( \|\nabla \delta u\|_2^2 + \|\nabla \delta v\|_2^2 + \|\nabla \delta T\|_2^2 \right), \\
        &m_1=  1 +  \|(v_1,v_2)\|_4^4 + \|(|\nabla u_2|, |\nabla v_2|,|\nabla v_1|)\|_2^2 + \|\nabla v_2\|_4^4, \\
        & m_2= C\|(\nabla T_{2}, \nabla q_2)\|_2, 
    \end{aligned}
\end{equation*}
it follows from the previous relations that:
\begin{equation*}
    f' + G \leq C_\epsilon m_1 f + m_2\left[ f G \log^+ \left(\frac{S}{4G} \right)\right]^{\frac{1}{2}}.
\end{equation*}
At this point, we can apply Lemma \ref{lemma_uniqueness} to conclude that $f \equiv 0$, which proves uniqueness for every fixed $\epsilon > 0$. This completes the proof of Theorem \ref{global_existence_thm_1}.

\section{Global existence and uniqueness of strong solutions to the Tropical Climate Model \eqref{tro_model_fin}}\label{sec:6}

In this section we provide the proof of Theorem \ref{thm_final}. In particular, Subsection \ref{existence} is devoted to establishing the existence of solutions via the classical compactness-based approach, together with the pointwise convergence of the moisture and temperature equations \eqref{eq_T} and \eqref{eq_q}. Then, in Subsection \ref{uniqueness}, we address the uniqueness of solutions by adapting the argument presented in Section 4 of \cite{LT}.

\subsection{Proof of Theorem \ref{thm_final} (Global Existence)}\label{existence}
We recall that the global in time strong solution of \eqref{approx_epsilon} satisfies, for every fixed $\epsilon > 0$, the following bounds uniform in $\epsilon$ derived from the energy inequality \eqref{ene_uni_epsilon},
\begin{equation}\label{uniboundeps}
    \begin{aligned}
        & \|u_{\epsilon}\|_{L^{\infty}_t H^1_x} \leq C & \|u_{\epsilon}\|_{L^{2}_t H^2_x} \leq C \quad  & \|v_{\epsilon}\|_{L^{\infty}_t H^1_x} \leq C  & \|v_{\epsilon}\|_{L^{2}_t H^2_x} \leq C \\
        &\|T_{\epsilon}\|_{L^{\infty}_t H^1_x} \leq C & \|T_{\epsilon}\|_{L^{2}_t H^2_x} \leq C  \quad  & \| \nabla u_{\epsilon} \|_{L^{1}_t L^{\infty}_x} \leq C & \|\partial_t u_{\epsilon}\|_{L^{2}_t L^{2}_x} \leq C \\
        & \|\partial_t v_{\epsilon}\|_{L^{2}_t L^{2}_x} \leq C \quad & \|\partial_t T_{\epsilon}\|_{L^{2}_t L^{2}_x} \leq C \quad & \|\partial_t q_{\epsilon}\|_{L^{2}_t L^{2}_x} \leq C \quad & \|q_{\epsilon}\|_{L^{\infty}_t H^1_x} \leq C. \\
    \end{aligned}
\end{equation}
In addition, as a consequence of Proposition \ref{max_princ}, we also have that
\begin{equation}\label{eq:linftyt}
\|T_{\epsilon}\|_{L^{\infty}_t L^{\infty}_x}\leq C,
\end{equation}
with $C>0$ independent on $\epsilon$. 
Therefore, there exists a subsequence (not relabelled) of $(u_\epsilon,v_\epsilon,q_\epsilon, T_{ \epsilon})$ such that,
\begin{equation}\label{conv_eps}
\begin{aligned}
    & (u_{\epsilon},v_{\epsilon}, T_{\epsilon},q_{\epsilon}) \overset{\ast}{\rightharpoonup} (u,v,T,q) \text{ in } L^{\infty}((0,\mathcal{T});H^1( \mathbb{R}^2)), \\
    & (u_{\epsilon},v_{\epsilon},T_{\epsilon}) \overset{}{\rightharpoonup} (u,v,T) \text{ in } L^{2}((0,\mathcal{T});H^2( \mathbb{R}^2)),\\
    &  (\partial_t u_{\epsilon}, \partial_t v_{\epsilon}, \partial_t T_{\epsilon}, \partial_t q_{\epsilon}) \overset{}{\rightharpoonup} (\partial_t u, \partial_t v, \partial_t T, \partial_t q) \text{ in } L^{2}((0,\mathcal{T});L^2( \mathbb{R}^2)),\\
    & \mathcal{H}_\epsilon(q_\epsilon - q_s) \overset{\ast}{\rightharpoonup} h_q \text{ in } L^{\infty}(\mathbb{R}^2 \times (0,\mathcal{T})),
\end{aligned}
\end{equation}
where the last convergence comes from the $L^{\infty}_{t,x}$ bound of the function $\mathcal{H}_\epsilon$. In addition, we also have that 
\begin{equation*}
T_{\epsilon}\overset{\ast}{\rightharpoonup} T\mbox{ in }L^{\infty}(\mathbb{R}^2 \times (0,\mathcal{T})),
\end{equation*}
and thus the limiting temperature satisfies 
\begin{equation}\label{eq:finitet}
T\in L^{\infty}(\mathbb{R}^2 \times (0,\mathcal{T})). 
\end{equation}

Applying Aubin-Lions lemma, for any positive time $\mathcal{T}$ and a disk $B_R \subseteq \mathbb{R}^2$ of arbitrary radius $R>0$ it holds:
\begin{equation}\label{a_l_eps}
    \begin{aligned}
        & (u_{\epsilon},v_{\epsilon}, T_{\epsilon}) \rightarrow (u,v,T) \text{ in } C([0,\mathcal{T});L^2(B_R))\cap L^2((0,T);H^1(B_R)) , \\
        & q_{\epsilon} \rightarrow q \text{ in } C([0,\mathcal{T});L^2(B_R)).
    \end{aligned}
\end{equation}
As argued in the previous section, the convergences in \eqref{conv_eps}, together with Fatou's Lemma and the weak lower semicontinuity of the norms, allow us to pass to the limit in \eqref{ene_uni_epsilon}, yielding
\begin{equation*}
\begin{aligned}
& \sup_{t \in [0,\mathcal{T}]}\left( \|(u,v,T, q)\|_{H^1}^2 + \|T\|_\infty \right)
   + \int_0^\mathcal{T} \|(\Delta u, \Delta v, \Delta T, \partial_t u, \partial_t v, \partial_t T, \partial_t q)\|_2^2 dt \\
   & + \int_0^\mathcal{T} \|\nabla u\|_{\infty}+  \|\nabla v\|_{\infty} dt \leq  \liminf_{\epsilon \rightarrow 0} \Bigg(\sup_{t \in [0, \mathcal{T}]}\left(\|(u_\epsilon,v_\epsilon,T_{\epsilon}, q_\epsilon)\|_{H^1}^2 + \|T_\epsilon\|_\infty \right) 
   \\
   & + \int_0^\mathcal{T} \|(\Delta u_\epsilon, \Delta v_\epsilon, \Delta T_\epsilon, \partial_t u_\epsilon, \partial_t v_\epsilon, \partial_t T_{\epsilon}, \partial_t q_\epsilon)\|_2^2 dt + \int_0^\mathcal{T} \big( \|\nabla u_\epsilon\|_{\infty} + \|\nabla v_\epsilon\|_{\infty} \big) dt \Bigg) \leq C.
\end{aligned}
\end{equation*}

It remains to establish the pointwise convergence as $\epsilon \to 0$ of the temperature and moisture equations \eqref{eq_T} and \eqref{eq_q}.

\subsection*{Convergence of the temperature and the moisture equations}
In this section, we prove the validity of the equations \eqref{eq_T} and \eqref{eq_q} almost everywhere on $\mathbb{R}^2 \times [0, \mathcal{T}]$. 
In other words, we need to show that the limit solution $(u,v,T,q)$ satisfies

\begin{itemize}
    \item[\textbf{I case:}] On $\left\{ (x,t) \in \mathbb{R}^2 \times [0, \mathcal{T}]: q-q_s < 0 \right\}$ it holds a.e that
    \begin{align*}
        & \partial_t q + u \nabla q + \bar{Q}\dive v = 0 \\
        &  \partial_t T + u \nabla T - \Delta T -\frac{H}{\pi}\frac{\theta_0 N^2}{g} \dive v = 0.
    \end{align*}
    \item[\textbf{II case:}]On $\left\{(x,t) \in \mathbb{R}^2 \times [0, \mathcal{T}]: q-q_s > 0 \right\}$ it holds a.e that
    \begin{equation*}
        \begin{aligned}
            & \partial_t q + u \nabla q + \bar{Q}\dive v + \frac{H g}{\pi R}G^+(T)(\dive v)^-= 0 \\
            &  \partial_t T + u \nabla T - \Delta T -\frac{H}{\pi}\frac{\theta_0 N^2}{g} \dive v =   \frac{H g}{\pi R}(\dive v)^- G^+(T). \\
        \end{aligned}
    \end{equation*} 
    \item[\textbf{III case:}] On $\left\{(x,t) \in \mathbb{R}^2 \times [0, \mathcal{T}]: q-q_s = 0 \right\}$ it holds a.e that
    \begin{equation*}
        \begin{aligned}
            & \partial_t q + u \nabla q + \bar{Q}\dive v +\frac{H g}{\pi R}G^+(T)(\dive v)^-h_q= 0 \\
            &  \partial_t T + u \nabla T - \Delta T -\frac{H}{\pi}\frac{\theta_0 N^2}{g} \dive v =   \frac{H g}{\pi R}(\dive v)^- G^+(T) h_q,\\
        \end{aligned}
    \end{equation*}
\end{itemize}
where $h_q$ is the weak* limit of $\mathcal{H}_\epsilon$ and $h_q = \alpha \in [0,1]$ when $q(x,t)=q_s$. For simplicity, we will focus on the moisture equation, as the argument for the temperature equation is analogous.

\subsection*{I case}
In the same spirit of \cite{LT}, for every triplet of positive integers $j,k,l >0$ we can define:
\begin{equation*}
    \begin{aligned}
        &O^- = \left\{ (x,t) \in \mathbb{R}^2 \times (0,\infty) : q-q_s < 0\right \}, \\
        & O_{jkl}^- = \left\{ (x,t) \in B_k(0) \times (0,l): q-q_s <- \frac{1}{j} \right \}, \\
        & O^- = \bigcup_{j= 1}^{\infty} \bigcup_{k= 1}^{\infty} \bigcup_{l= 1}^{\infty} O_{jkl}^-,
    \end{aligned}
\end{equation*}
where $B_k$ is the usual disk in $\mathbb{R}^2$ of radius $k$. Let us fix $(j,k,l)$, from \eqref{a_l_eps} we deduce that $q_\epsilon \rightarrow q$ in $L^2(O_{jkl}^-)$ and thus there exists a subsequence $q_{\epsilon_n}$ such that $q_{\epsilon_n} \rightarrow q$ a.e. in $O_{jkl}^-$. Thanks to the Egoroff theorem $\forall \eta > 0 \; \exists \; E_{\eta} \subset O_{jkl}^-$  such that $|E_{\eta}| \leq \eta$ and $q_{\epsilon_n} \rightarrow q$ uniformly on $O_{jkl}^- \setminus E_{\eta}$. Therefore,
\begin{equation*}
    \forall j>0 \quad \exists \quad \bar{n} = n(j) \; \text{ such that }\; \forall n > \bar{n} \quad q_{\epsilon_n} - q_s \leq q - q_s + \frac{1}{2j} \leq -\frac{1}{2j} <0,
\end{equation*}
this implies that for $q_{\epsilon_n}$ $H_{\epsilon_n}(q_{\epsilon_n} -q_s)=0$ on $O_{jkl}^- \setminus E_{\eta}$ for $\epsilon_n$ sufficiently small ($\epsilon_n < \epsilon_{\bar{n}}$) and
\begin{equation*}
    \mathcal{G_\epsilon}:= \partial_t q_{\epsilon_n} + u_{\epsilon_n} \nabla q_{\epsilon_n} + \bar{Q} \dive v_{\epsilon_n} = 0 \quad \text{ on } \quad O_{jkl}^- \setminus E_{\eta}.
\end{equation*}
From the strong convergence of $(q_{\epsilon_n}, u_{\epsilon_n}, v_{\epsilon_n})$ in $\eqref{a_l_eps}$ we deduce 
\begin{equation*}
\begin{aligned}
    &\mathcal{G_\epsilon} \overset{}{\rightharpoonup} \mathcal{G} \quad \text{in} \quad L^2( O_{jkl}^- \setminus E_{\eta}), \\
    & \mathcal{G}:= \partial_t q + u \nabla q + \bar{Q} \dive v =0.
\end{aligned}
\end{equation*}
Since $\mathcal{G_\epsilon} = 0 \quad  a.e \quad  O_{jkl}^- \setminus E_{\eta}$ this means that $\mathcal{G} = 0 \quad  a.e \quad on \qquad  O_{jkl}^- \setminus E_{\eta}$. Thanks to Lemma \ref{egoroff} we conclude that $\mathcal{G}=0$ on $O_{jkl}^-$ for any positive $(j,k,l)$ and therefore it holds on $O^{-}$.

\subsection*{II case}
In the same way we construct $O^+$ and $O_{jkl}^+$ such that:
\begin{equation*}
    \begin{aligned}
        & O^+ = \left\{ (x,t) \in \mathbb{R}^2 \times (0, \infty): q-q_s > 0\right \}, \;  O_{jkl}^+ = \left\{ (x,t) \in B_k(0) \times (0,l) : \frac{1}{j} < q-q_s\right \}, \\
        &O^+ = \bigcup_{j= 1}^{\infty} \bigcup_{k= 1}^{\infty} \bigcup_{l= 1}^{\infty} O_{jkl}^+.
    \end{aligned}
\end{equation*}
As before, for every fixed $(j,k,l)$, the strong convergence of $q_\epsilon$ in \eqref{a_l_eps} and the Egoroff theorem imply that $\forall \eta > 0 \; \exists \; E_{\eta} \subset O_{jkl}^-$  such that $|E_{\eta}| \leq \eta$ and a subsequence $q_{\epsilon_n} \rightarrow q$ uniformly on $O_{jkl}^+ \setminus E_{\eta}$. In particular, for any fixed $\eta$, it holds that 
\begin{equation*}
    \forall j>0 \quad \exists \quad \bar{n} = n(j) \; \text{ such that }\; \forall n > \bar{n} \quad q -\frac{1}{2j} < q_{\epsilon_n} < q +\frac{1}{2j}\mbox{ and }\epsilon_n<\frac{1}{2j}.
\end{equation*}
Therefore for any $n>\bar{n}$ it holds
\begin{equation*}
    0 < \epsilon_n < \frac{1}{2j}=\frac{1}{j}- \frac{1}{2j} < q - q_s - \frac{1}{2j} \leq q_{\epsilon_n} - q_s,
\end{equation*}
and thus
\begin{equation*}
    q_{\epsilon_n} - q_s > \epsilon_n \quad \text{ on } \quad O_{jkl}^+ \setminus E_{\eta}.
\end{equation*}
This implies that for $q_{\epsilon_n}$ it holds that $H_{{\epsilon}_n}(q_{\epsilon_n} -q_s)=1$ and
\begin{equation*}
    \mathcal{G_\epsilon}:= \partial_t q_{\epsilon_n} + u_{\epsilon_n} \nabla q_{\epsilon_n} + \bar{Q} \dive v_{\epsilon_n} + \frac{H}{\pi} G^+(T_{\epsilon_n})( \dive v_{\epsilon_n})^+= 0 \quad \text{ on } \quad O_{jkl}^+ \setminus E_{\eta}.
\end{equation*}
From the strong convergence of $(q_{\epsilon_n}, u_{\epsilon_n}, v_{\epsilon_n})$ in \eqref{a_l_eps} we deduce that
\begin{equation*}
\begin{aligned}
    &\mathcal{G_\epsilon} \overset{}{\rightharpoonup} \mathcal{G} \quad \text{in} \quad L^2( O_{jkl}^- \setminus E_{\eta}), \\
    & \mathcal{G}:= \partial_t q + u \nabla q + \bar{Q} \dive v + \frac{H g}{\pi R} G^+(T)( \dive v)^+=0.
\end{aligned}
\end{equation*}
Since $\mathcal{G_\epsilon} = 0 \quad  a.e \quad  O_{jkl}^+ \setminus E_{\eta}$ this means that $\mathcal{G} = 0 \quad  a.e \quad  O_{jkl}^+ \setminus E_{\eta}$. Thanks to Lemma \ref{egoroff} we conclude that $\mathcal{G}=0$ on $O_{jkl}^+$ for any positive $(j,k,l)$ and therefore it holds on $O^{+}$.

\subsection*{III case}
As in the previous case we define
\begin{equation*}
    \begin{aligned}
        & O = \left\{ (x,t) \in \mathbb{R}^2 \times (0,\infty) : q-q_s = 0\right \}, \quad O_{kl} = \left\{ (x,t) \in B_k(0) \times (0,l) : q=q_s, \right \}, \\
        & O = \bigcup_{k= 1}^{\infty} \bigcup_{l= 1}^{\infty} O_{kl}.
    \end{aligned}
\end{equation*}

Let us also define
\begin{equation*}
    \mathcal{G_\epsilon}:= \partial_t q_{\epsilon_n} + u_{\epsilon_n} \nabla q_{\epsilon_n} + \bar{Q} \dive v_{\epsilon_n} + \frac{H}{\pi} H_{\epsilon_n}(q_{\epsilon_n} -q_s) G^+(T_{\epsilon_n})( \dive v_{\epsilon_n})^+= 0 \quad \text{ on } \quad O_{kl}.
\end{equation*}
By using the convergences in \eqref{conv_eps} it is easy to obtain that 

\begin{equation*}
\begin{aligned}
    &\mathcal{G_\epsilon} \overset{}{\rightharpoonup} \mathcal{G} \quad \text{in} \quad L^2( O_{kl}), \\
    & \mathcal{G}:= \partial_t q + u \nabla q + \bar{Q} \dive v + \frac{H}{\pi} h_q G^+(T)( \dive v)^+=0.
\end{aligned}
\end{equation*}
Since $\mathcal{G_\epsilon} = 0 \quad  a.e \quad  O_{kl}$, by using again Lemma \ref{egoroff} we deduce that  this means that $\mathcal{G} = 0 \quad  a.e \quad  O$. Finally, since $h_q$ is the weak start limit of $H_{\epsilon_n}(q_{\epsilon_{n}}-q_s)\in [0,1]$ it also holds that $h_q = \alpha \in [0,1]$.

\subsection{Proof of Theorem \ref{teo:uniqueness} (Uniqueness of global strong solutions)}\label{uniqueness}
In this section we establish the uniqueness of strong global solutions to the system \eqref{tro_model_fin}. Let $(u_1,v_1,T_{1},q_1)$ and $(u_2,v_2,T_{2},q_2)$ be two strong solutions of \eqref{tro_model_fin} in the sense of Definition \ref{def_strong}, corresponding to the same initial data $(u_0,v_0,T_{0},q_0)$ on the time interval $[0, \mathcal{T})$ for any $0<\mathcal{T}<\infty$. We define the difference as
\begin{equation*}
    (\delta u, \delta v, \delta T, \delta q)= (u_1,v_1,T_{1},q_1)- (u_2,v_2,T_{2},q_2),
\end{equation*}
thus we have
\begin{equation}\label{eq_diff_sol}
\left\{
    \begin{aligned}
     &\partial_t \delta u + u_1 \nabla \delta u + \delta u \nabla u_2 - \Delta \delta u + \nabla \delta p + \dive (v_1 \otimes \delta v) + \dive (\delta v \otimes v_2) = 0 \\
    &\partial_t \delta v + u_1 \nabla \delta v + \delta u \nabla v_2 - \Delta \delta v + v_1 \nabla \delta u + \delta v \nabla u_2 = \frac{H}{\pi} \frac{g}{\theta_0} \nabla \delta T \\
    &\dive \delta u  = 0 \\
    &\partial_t \delta T + u_1 \nabla \delta T + \delta u \nabla T_{2} - \Delta \delta T - \frac{H}{\pi}\frac{N^2 \theta_0}{g} \dive  \delta v  = \frac{H g}{\pi R}(\dive v_1)^- h_{q_1} G(T_1)^+  \\
    & \hspace{8.5cm}- \frac{H g}{\pi R} (\dive v_2)^- h_{q_2} G(T_2)^+ \\
    &\partial_t \delta q + u_1 \nabla \delta q + \delta u \nabla q_2 + \bar{Q} \dive \delta v  = -  \frac{H g}{\pi R}(\dive v_1)^- h_{q_1}G(T_1)^+ \\
    & \hspace{6cm}+ \frac{H g}{\pi R} (\dive v_2)^- h_{q_2} G(T_2)^+. 
\end{aligned}\right.
\end{equation}
We recall  the assumption that
\begin{equation*}
    (q_1-q_s)(q_2-q_s)\ge 0
\quad \text{a.e. in } \mathbb{R}^2\times(0,\mathcal{T}),
\end{equation*}
that is, the quantities $q_1-q_s$ and $q_2-q_s$ possibly have different sign on sets of measure zero.

As usual, we multiply the first two equations in \eqref{eq_diff_sol} by $(\delta u, \delta v)$ and integrate over $\mathbb{R}^2$. After standard manipulations we obtain 
\begin{equation}\label{pt_1}
    \begin{aligned}
        & \frac{1}{2} \frac{d}{dt} \left( \|\delta u\|_2^2 + \|\delta v\|_2^2  \right) + \frac{1}{2}\|\nabla \delta u\|_2^2 + \frac{3}{4}\|\nabla \delta v\|_2^2 \\
        & \leq C \int_{\mathbb{R}^2}[(|\nabla u_2|+ |\nabla v_2| + |\nabla v_1| + |v_1|^2 + |v_2|^2] (|\delta u|^2 + |\delta v|^2)  dx + C \|\delta T\|_2^2.
    \end{aligned}
\end{equation}
Regarding the temperature and moisture equations, we decompose the domain $\Omega = \mathbb{R}^2 \times (0,\infty)$ as follows:
\begin{equation*}
    \Omega = \sum_{i=1}^7\Omega_i,
\end{equation*}
where
\begin{equation*}
\begin{aligned}
   &\Omega_1  = \left\{ q_1 - q_s < 0 \right\} \cap \left\{ q_2 - q_s < 0 \right\},\qquad \Omega_2=   \left\{ q_1 - q_s < 0 \right\} \cap \left\{ q_2 - q_s = 0 \right\}, \\
   &\Omega_3=   \left\{ q_1 - q_s = 0 \right\} \cap \left\{ q_2 - q_s < 0 \right\}, \qquad \Omega_4 =   \left\{ q_1 - q_s = 0 \right\} \cap \left\{ q_2 - q_s = 0 \right\}, \\
   & \Omega_5=   \left\{ q_1 - q_s = 0 \right\} \cap \left\{ q_2 - q_s > 0 \right\}, \qquad \Omega_6=   \left\{ q_1 - q_s > 0 \right\} \cap \left\{ q_2 - q_s = 0 \right\},\\
   &\Omega_7=   \left\{ q_1 - q_s > 0 \right\} \cap \left\{ q_2 - q_s > 0 \right\}.
\end{aligned}
\end{equation*}
We notice that on the set $\Omega_1$ since $h_{q_1} = h_{q_2}=0$ the equations for $\delta T, \delta q$ satisfy
\begin{equation}\label{delta_ome_1}
    \begin{aligned}
        & \partial_t \delta T + u_1 \nabla \delta T + \delta u \nabla T_{2} - \Delta \delta T - \frac{H}{\pi}\frac{N^2 \theta_0}{g} \dive  \delta v=0, \\
        & \partial_t \delta q + u_1 \nabla \delta q + \delta u \nabla q_2 + \bar{Q} \dive \delta v  =0.
    \end{aligned}
\end{equation}
On the set $\Omega_2$ since $q_2=q_s$ then $\partial_t q_2 + u \nabla q_2=0$ for a.e. $(x,t)$. Thus, from the equation of $q_2$ we have
\begin{equation*}
    \bar{Q} \dive v_2 = - \frac{Hg}{\pi R}(\dive v_2)^- G^+(T_2)\alpha,
\end{equation*}
where $h_{q_2}=\alpha \in (0,1)$. This, in particular, implies that on $\Omega_2$,  $\dive v_2 = 0$. This type of argument will be applied multiple times in the proof, even if it is not explicitly mentioned each time. Finally, on $\Omega_2$ the equations for $\delta T, \delta q$ satisfy
\begin{equation}\label{delta_ome_2}
    \begin{aligned}
        & \partial_t \delta T + u_1 \nabla \delta T + \delta u \nabla T_{2} - \Delta \delta T - \frac{H}{\pi}\frac{N^2 \theta_0}{g} \dive  \delta v=0, \\
        & \partial_t \delta q + u_1 \nabla \delta q + \delta u \nabla q_2 + \bar{Q} \dive v_1 =0.
    \end{aligned}
\end{equation}
On $\Omega_3$ we have $\dive v_1=0$ and
\begin{equation}\label{delta_ome_3}
    \begin{aligned}
        & \partial_t \delta T + u_1 \nabla \delta T + \delta u \nabla T_{2} - \Delta \delta T + \frac{H}{\pi}\frac{N^2 \theta_0}{g} \dive v_2= 0, \\
        & \partial_t \delta q + u_1 \nabla \delta q + \delta u \nabla q_2 - \bar{Q} \dive v_2= 0.
    \end{aligned}
\end{equation}
On $\Omega_4$ we have $\dive v_1=0=\dive v_2$ and
\begin{equation}\label{delta_ome_4}
    \begin{aligned}
        & \partial_t \delta T + u_1 \nabla \delta T + \delta u \nabla T_{2} - \Delta \delta T = 0, \\
        & \partial_t \delta q + u_1 \nabla \delta q + \delta u \nabla q_2 = 0.
    \end{aligned}
\end{equation}
On $\Omega_5$ we have $\dive v_1=0$ and
\begin{equation}\label{delta_ome_5}
    \begin{aligned}
        & \partial_t \delta T + u_1 \nabla \delta T + \delta u \nabla T_{2} - \Delta \delta T + \frac{H}{\pi}\frac{N^2 \theta_0}{g} \dive v_2= -\frac{Hg}{\pi R}(\dive v_2)^-G^+(T_2), \\
        & \partial_t \delta q + u_1 \nabla \delta q + \delta u \nabla q_2 - \bar{Q} \dive v_2= \frac{Hg}{\pi R}(\dive v_2)^-G^+(T_2).
    \end{aligned}
\end{equation}
On $\Omega_6$ we have $\dive v_2=0$ and

\begin{equation}\label{delta_ome_6}
    \begin{aligned}
        & \partial_t \delta T + u_1 \nabla \delta T + \delta u \nabla T_{2} - \Delta \delta T - \frac{H}{\pi}\frac{N^2 \theta_0}{g} \dive v_1= \frac{Hg}{\pi R}(\dive v_1)^-G^+(T_1), \\
        & \partial_t \delta q + u_1 \nabla \delta q + \delta u \nabla q_2 + \bar{Q} \dive v_1= -\frac{Hg}{\pi R}(\dive v_1)^-G^+(T_1).
    \end{aligned}
\end{equation}
On $\Omega_7$ we have,

\begin{equation}\label{delta_ome_7}
    \begin{aligned}
         & \partial_t \delta T + u_1 \nabla \delta T + \delta u \nabla T_{2} - \Delta \delta T - \frac{H}{\pi}\frac{N^2 \theta_0}{g} \dive \delta v \\
         & \hspace{+ 3.cm}= \frac{Hg}{\pi R}(\dive v_1)^-G^+(T_1) - \frac{Hg}{\pi R}(\dive v_2)^-G^+(T_2),\\
        & \partial_t \delta q + u_1 \nabla \delta q + \delta u \nabla q_2 + \bar{Q} \dive \delta v = -\frac{Hg}{\pi R}(\dive v_1)^-G^+(T_1) + \frac{Hg}{\pi R}(\dive v_2)^-G^+(T_2).
    \end{aligned}
\end{equation}
We fix the following notation,
\begin{equation*}
    \delta H := H_1 - H_2 =  (\dive v_1)^-G^+(T_1) - (\dive v_2)^-G^+(T_2).
\end{equation*}
Therefore, combining \eqref{delta_ome_1} - \eqref{delta_ome_7}, we obtain
\begin{equation}\label{equation_delta_T}
    \begin{aligned}
        \partial_t \delta T + u_1 \nabla \delta T + \delta u \nabla T_{2}  = &\frac{H}{\pi}\frac{N^2 \theta_0}{g}(\dive \delta v \chi_{\Omega_1}  + \dive \delta v \chi_{\Omega_7})  + \frac{H}{\pi}\frac{N^2 \theta_0}{g}(\dive v_1 \chi_{\Omega_2} + \dive v_1 \chi_{\Omega_6} ) \\
        & + \frac{H}{\pi}\frac{N^2 \theta_0}{g}(-\dive v_2 \chi_{\Omega_3} - \dive v_2 \chi_{\Omega_5} ) - \frac{Hg}{\pi R}(\dive v_2)^-G^+(T_2) \chi_{\Omega_5} \\
        & + \frac{Hg}{\pi R}(\dive v_1)^-G^+(T_1)  \chi_{\Omega_6} + \frac{Hg}{\pi R}\delta H\chi_{\Omega_7}\\
        & = \frac{H}{\pi}\frac{N^2 \theta_0}{g} (\dive \delta v - \dive \delta v \chi_{\Omega_4}) \\
        & + \frac{H}{\pi}\frac{N^2 \theta_0}{g}(\dive v_2  \chi_{\Omega_2} - \dive v_1 \chi_{\Omega_3} - \dive v_1 \chi_{\Omega_5} + \dive v_2 \chi_{\Omega_6}) \\
        & + \frac{Hg}{\pi R}(\delta H -  \delta H\chi_{\Omega_1} - \delta H\chi_{\Omega_2} - \delta H\chi_{\Omega_3}- \delta H\chi_{\Omega_4})\\
        & + \frac{Hg}{\pi R}(- H_1\chi_{\Omega_5}  + H_2\chi_{\Omega_6}) \\
        & = I_1 + I_2 + I_3 + I_4.
    \end{aligned}
\end{equation}

\begin{equation}\label{equation_delta_q}
    \begin{aligned}
        \partial_t \delta q + u_1 \nabla \delta q + \delta u \nabla q_2  = & - \bar{Q}(\dive \delta v \chi_{\Omega_1}   + \dive \delta v \chi_{\Omega_7}+ \dive v_1 \chi_{\Omega_2} + \dive v_1 \chi_{\Omega_6})\\ 
        & + \bar{Q}(\dive v_2 \chi_{\Omega_3} + \dive v_2 \chi_{\Omega_5} )+ \frac{Hg}{\pi R}(\dive v_2)^-G^+(T_2) \chi_{\Omega_5} \\
        &- \frac{Hg}{\pi R}(\dive v_1)^-G^+(T_1)  \chi_{\Omega_6} - \frac{Hg}{\pi R} \delta H\chi_{\Omega_7}\\
        & = - \bar Q (\dive \delta v - \dive \delta v \chi_{\Omega_4}) \\
        & - \bar Q (  \dive v_2  \chi_{\Omega_2} - \dive v_1 \chi_{\Omega_3} + \dive v_2 \chi_{\Omega_6}  - \dive v_1 \chi_{\Omega_5})  \\
        & - \frac{Hg}{\pi R}(\delta H -  \delta H\chi_{\Omega_1} - \delta H\chi_{\Omega_2} - \delta H\chi_{\Omega_3}- \delta H\chi_{\Omega_4})\\
        & - \frac{Hg}{\pi R}( - H_1\chi_{\Omega_5} + H_2\chi_{\Omega_6}) \\
        &= J_1 + J_2 + J_3 + J_4.
    \end{aligned}
\end{equation}
Let us start by multiplying equation \eqref{equation_delta_T} by $\delta T$ and integrate over $\mathbb{R}^2$,
\begin{equation}\label{eq_T_final}
    \begin{aligned}
        \frac{1}{2}\frac{d}{dt} \|\delta T\|_2^2+ \|\nabla \delta T\|_2^2 & = - \int_{\mathbb{R}^2} \delta u \nabla T_2 \delta T dx + \frac{H}{\pi}\frac{N^2 \theta_0}{g} \int_{\mathbb{R}^2} (\dive \delta v - \dive \delta v \chi_{\Omega_4})\delta Tdx \\
        & + \frac{Hg}{\pi R} \int_{\mathbb{R}^2}(\delta H -  \delta H\chi_{\Omega_1} - \delta H\chi_{\Omega_2} - \delta H\chi_{\Omega_3}- \delta H\chi_{\Omega_4})\delta T dx\\
        & \leq \|\delta u\|_\infty \|\nabla T_2\|_2 \|\delta T\|_2  + C(1 + \|\nabla v_2\|_4^4)\|\delta T\|_2^2 \\
        & + \frac{1}{4} \|\nabla \delta v\|_2^2 + \frac{1}{4}\|\nabla \delta T \|_2^2, 
    \end{aligned}
\end{equation}
since $I_2 = 0$, $H_2\chi_{\Omega_6}=0$ and $H_1\chi_{\Omega_5} = 0$. 

Finally, we multiply \eqref{equation_delta_q} by $\delta q$ and integrate over $\mathbb{R}^2$, we have
\begin{equation}\label{eq_q_final}
    \begin{aligned}
        \frac{1}{2}\frac{d}{dt} \|\delta q\|_2^2 & \leq \|\delta u\|_\infty \|\nabla q_2\|_2 \|\delta q\|_2 + C(1+ \|\nabla v_2\|_4^4)(\|\delta T\|_2^2 + \|\delta q\|_2^2) \\
        & \frac{1}{4}\|\nabla \delta v\|_2^2 + \frac{1}{4}\|\nabla \delta T\|_2^2
    \end{aligned}
\end{equation}
since $J_2 = 0$, $H_2\chi_{\Omega_6}=0$ and $H_1\chi_{\Omega_5} = 0$. At this stage, by combining \eqref{pt_1}, \eqref{eq_T_final}, and \eqref{eq_q_final}, and proceeding as in Subsection \ref{unique_1}, we can apply Lemma \ref{uniqueness} to conclude the proof.

\begin{acknowledgments}
The authors gratefully acknowledge the partial support by the Gruppo Na\-zio\-na\-le per l’Analisi Matematica, la Probabilit\`a e le loro
Applicazioni (GNAMPA) of the Istituto Nazionale di Alta Matematica (INdAM),  by the PRIN2022
``Classical equations of compressible fluids mechanics: existence and
properties of non-classical solutions'' and by the PRIN2022-PNRR ``Some
mathematical approaches to climate change and its impacts.''
\end{acknowledgments}

\vspace{1cm}
\textbf{Data Availability.} The authors declare that data sharing is not applicable to this article as no datasets were generated or analysed.
\\
\\
\textbf{Conflict of interest.} The authors declare that they have no conflict of interest.

\end{document}